\theoremstyle{plain} \numberwithin{equation}{section}
\newtheorem{thm}{Theorem}[section]
\newtheorem{lemma}[thm]{Lemma}
\newtheorem{cor}[thm]{Corollary}
\theoremstyle{definition}
\theoremstyle{remark}
\newtheorem{rem}[thm]{Remark}
\theoremstyle{remark}
\newtheorem{ex}[thm]{Example}
\providecommand{\abs}[1]{\left\vert#1\right\vert}
\providecommand{\norm}[1]{\left\Vert#1\right\Vert}
\DeclareMathOperator{\supp}{supp}
\DeclareMathOperator{\Var}{Var}
\DeclareMathOperator{\Cov}{Cov}
\newcommand{\Nat}{{\mathbb N}}
\newcommand{\R}{{\mathbb R}}
\newcommand{\Z}{{\mathbb Z}}
\newcommand{\K}{{\mathbb K}}
\newcommand{\Exp}{{\mathbb E}}
\newcommand{\Pro}{{\mathbb P}}
\newcommand{\T}{\mathcal{T}}
\newcommand{\e}{\mathcal{E}}
\newcommand{\Xno}{X_{n,\mathbf{0}}}
\newcommand{\Xny}{X_{n,y}}
\newcommand{\XnI}{X_{n,y_1}}
\newcommand{\XnII}{X_{n,y_2}}
\newcommand{\xio}{\xi_{\mathbf{0}}}
\newcommand{\xiy}{\xi_{y}}
\newcommand{\xiyI}{\xi_{y_1}}
\newcommand{\xiyII}{\xi_{y_2}}
\newcommand{\xixI}{\xi_{x_1}}
\newcommand{\xixII}{\xi_{x_2}}
\newcommand{\fn}{\hat{f}_{n}}
\newcommand{\fno}{\hat{f}_{B'_n}}
\newcommand{\fny}{\hat{f}_{B'_n+y}}
\newcommand{\fnyI}{\hat{f}_{B'_n+y_1}}
\newcommand{\fnyII}{\hat{f}_{B'_n+y_2}}
\newcommand{\fnxI}{\hat{f}_{B'_n+x_1}}
\newcommand{\fnxII}{\hat{f}_{B'_n+x_2}}
\newcommand{\fnYi}{\hat{f}_{B'_n+Y_i}}
\newcommand{\fnYj}{\hat{f}_{B'_n+Y_j}}
\newcommand{\fBn}{\hat{f}_{B'_n}}
\newcommand{\fBno}{\hat{f}_{B'_n}(\xi_{\mathbf{0}})}
\definecolor{ao}{rgb}{0.0, 0.35, 0.0}
\newcommand{\edit}[1]{\textcolor{black}{#1}}
\newcommand{\new}[1]{\textcolor{black}{#1}}
\title{Estimation of entropy for Poisson\\ marked point processes}
\author{Patricia Alonso-Ruiz and  Evgeny Spodarev}
\date{\small\today}
\begin{document}
\maketitle
\begin{abstract}
In this paper, a kernel estimator of the differential entropy of the mark distribution of a homogeneous Poisson marked point process is proposed. The marks have an absolutely continuous distribution on a compact Riemannian manifold without boundary. $L^2$ and almost surely consistency of this estimator as well as its asymptotic normality are investigated.
\end{abstract}

{\bf Keywords}: marked point process, kernel density estimator, central limit theorem, fibre process, Boolean model.
%-------------------------------------------------------------------------------------------------------------------------------------------------------------------------------------------------------

\section{Introduction}

%We consider a homogeneous Poisson marked point process (MPP) with marks from a compact Riemannian manifold without boundary that are assumed to be independent of the process. We are interested in detecting inhomogeneities in the distribution of the marks by studying its differential (Shannon, Kolmogorov) entropy.

The concept of entropy was introduced by Shannon 
%in~\cite{Sha48} for the needs of
\edit{in the context of} information theory~\cite{Sha48} and its origin lies in the classical Boltzmann entropy of thermodynamics. In Shannon's original paper, entropy was defined both for discrete and continuous distributions in $\R^d$. In the last case it is called \textit{differential entropy} and this notion can be naturally generalized 
%
%(see e.g.~\cite[Section 14.8]{DVJ08II}) 
%
as follows: Let $P$ be a probability distribution of a random element $X$ on an abstract measurable phase space \edit{$(M,\mu)$} with probability density $f$ with respect to 
%a certain reference measure $\mu$ on $M$. 
\edit{$\mu$}. The entropy of $X$ is given by 
$$
{\cal E}_f=-\Exp_P \left( \log  f(X) \right)=-\int\limits_M f(x) \log f(x) \, \mu(dx),
$$
where the expectation $\Exp_P$ is taken with respect to the probability measure $P$. 

%
%An overview of nonparametric approaches to estimate the differential entropy for $M=\R^d$ can be found in~\cite{BDGM97}. 
%this reference is maybe not so relevant 
%Entropy estimation in discrete random settings has been treated in~\cite{KASYW98}.
%
%New paragraph: motivation for the study of the entropy
\edit{In this paper, we consider a homogeneous Poisson marked point process (MPP) with marks from a compact Riemannian manifold of dimension $p\geq 1$ without boundary that are assumed to be independent of the process, and investigate the differential entropy of the mark distribution $\e_f$. Our motivation for the study of this quantity is its applicability to detect inhomogeneities in materials modeled by MPPs such as fibre-reinforced plastics, where the direction of each fibre corresponds to a marks of the MPP. During the production process of such materials, the direction of the fibres may deviate from the predefined one and thus give rise to undesirable clusters or deformations. If the deviation is strong, a significant change on the (local) entropy of the directional distribution can be expected. Considering marks with values in a Riemannian manifold makes this method applicable not only to directions but to any other characteristic of interest, for instance fibre length or fibre curvature. Asymptotic properties of such an estimator are important in particular for hypothesis testing.}

In the present work, we propose a nonparametric plug-in \edit{estimator of} the differential entropy \edit{$\e_f$} based on~\cite{AL76}. \edit{It} requires estimating the density of the distribution of interest in a nonparametric way, \edit{which we perform by means of} \textit{kernel density estimation}. \edit{This} technique \edit{was} introduced for stationary sequences of real random variables by Rosenblatt~\cite{Ros56} and Parzen~\cite{Par62}, \edit{and} extended to stationary real random fields in~\cite{EM14}. In the case of finite samples of i.i.d. random vectors on the sphere, nonparametric kernel estimation methods have been studied in~\cite{CHW87,BRZ88} and extended to Riemannian manifolds in~\cite{Pel05,HR09}. Alternative nonparametric estimators for the directional distribution in line and fibre processes have been presented in~\cite{Kid01}.

The main result of our paper, Theorem~\ref{thm CLT EE}, gives a central limit theorem (CLT) for an estimator of the differential entropy of the mark distribution density $f$ of a homogeneous Poisson MPP as the observation window grows to $\R_+^d$ in a regular manner. This result is an application of a more general result (c.f. Corollary~\ref{cor CLT mdep random sum}) of this type for sequences of $m_n$-dependent random fields proved in Section~\ref{sectEntrCLT}.

The paper is organized as follows: notation and basics of the theory of MPPs are given in Section \ref{sectPrelim}. In Section \ref{sectKernelEstim} we construct a nonparametric kernel density estimator of $f$ and give conditions for its $L^2$ and almost sure consistency. In Section \ref{sectEntrEst} we introduce the nonparametric estimator $\widehat{\e}_f(B_n)$ of the entropy ${\e}_f$
% of the density $f$ 
in an observation window $B_n\subset\R^d$ and prove \edit{its} $L^2$-consistency 
%of this estimator 
when the window size grows appropriately. Finally, we present in Section \ref{sectEntrCLT} a CLT for random sums of $m_n$-dependent random fields (cf. Corollary \ref{cor CLT mdep random sum}) where independence between the random number of summands and the summands themselves is not assumed. A special case of this result is applied to obtain a CLT of the entropy estimator.

%-------------------------------------------------------------------------------------------------------------------------------------------------------------------------------------------------------
\section{Preliminaries} \label{sectPrelim}
In this section, we briefly review basic notions from the theory of marked point processes. For an introduction and summary on these and other models of stochastic geometry we refer the reader to e.g.~\cite{SKM87,Spo13}.

%-------------------------------------------------------------------------------------------------------------------------------------------------------------------------------------------------------
\subsection{Poisson marked point processes}\label{sectMPPP}
In the following, 
%we consider 
$\Pi:=\{Y_i\}_{i\geq 1}$ 
%to be 
\edit{will denote a} \textit{homogeneous Poisson point process} on $\R^d$ of intensity $\lambda>0$
%. Moreover, we denote by 
\edit{and} $(M,g)$ a compact smooth Riemannian manifold of dimension $p$ without boundary and with Riemannian metric $g$. We further assume that $(M,g)$ is complete, i.e. $(M,d_g)$ is a complete metric space, where $d_g$ denotes the geodesic distance induced by the Riemannian metric $g$. The associated Riemannian measure will be denoted by $\upsilon_g$. A detailed construction of this measure can be found e.g. in~\cite[p. 61]{Sak96}. Note that since $M$ is compact, $\upsilon_g(M)$ is finite.

\medskip

To each point $Y_i\in\Pi$ we attach a mark $\xi_i\in M$ and assume that 
%the 
marks are i.i.d. random variables independent of the location of the points in $\Pi$. %The collection $\Psi:=\{ (Y_i,\xi_i),~Y_i\in\Pi\}$ is the \textit{Poisson marked point process} we will work with
%. $\Psi$ 
\edit{The \textit{Poisson marked point process $\Psi:=\{ (Y_i,\xi_i),~Y_i\in\Pi\}$} we will work with is} %can be seen as 
a random variable with values in $\mathcal{N}:=\{\varphi\text{ locally finite counting measure on }\R^d\times M\}$. An important property of this process is \textit{stationarity}, meaning that $T_y\Psi\stackrel{d}{=}\Psi$ for all $y\in\R^d$, where the translation operator $T_y$ is defined as 
%correction: B+y --> (B+y)
$T_y \varphi(B\times L):= \varphi(\edit{(B+ y)}\times L)$ for any Borel set $B\times L\subset\R^d\times M$ and $\varphi\in\mathcal{N}$. We will assume that the 
%probability law
\edit{distribution} of a typical mark $\xio$ has a density $f\colon M\to\R$ with respect to the Riemannian volume measure $\upsilon_g$. 

%New example 1
\edit{
\begin{ex}
Poisson fibre process (c.f.~\cite[Section 8]{SKM87}). A \textit{fibre} $F\colon [0,1]\to\R^2$ is a sufficiently smooth simple curve of finite length and a \textit{fibre process} $\Phi$ is a random closed subset of $\R^2$ that can be represented as the union of at most countable many fibres $F$. To each fibre, we can attach a mark $\xi_F\in[0,\ell]$ that represents its (random) length. If the fibre process is Poisson distributed, then $\Psi=\{(F,\xi_F), F\in\Phi\}$ and $M=[0,\ell]$.
\end{ex}
}

%New example 2
\edit{
\begin{ex}\label{example boolean}
Boolean model. Assume $d\geq 3$ and consider for each $1\leq k\leq d-1$ the Grassmannian $G(k,d)$, i.e. the set of all non-oriented $k$-dimensional flats in $\R^d$ that contain the origin  (see e.g.~\cite[p. 186]{Sak96}). This is a compact manifold of dimension $k(d-k)$. Furthermore, denote by $B(o,r)$ the ball of radius $r$ centered at the origin $o\in\R^d$. The homogeneous Poisson point process $\Pi\subset\R^d$ leads to the Boolean model
\[
\Phi:=\bigcup_{Y_i\in\Pi}\big((B(o,R_i)\cap Z_i)+Y_i\big),
\]
where $R_i$ and $Z_i$ are independent copies of the random radius $R\colon\Omega\to [0,r]$ and the random Grassmannian $Z\colon\Omega\to G(k,d)$, respectively. The particular case $k=d-1$ is used in applications to model lamellae structures, whereas the case $k=1$ corresponds to a Poisson fibre process with straight fibres. In both cases, $G(k,d)$ is isomorphic to the half-sphere $S_+^{d-1}$. Based on this model, one can directly work with the MPP $\Psi=\{(Y_i,B(o,R_i)\cap Z_i)\}_{i\geq 1}$, with $M=[0,r]\times G(k,d)$ and $p=k(d-k)$. Here, one may be interested in the entropy of some specific characteristics of the grains, for instance their radius $R$ and direction $Z$. 
\end{ex}
}
%-------------------------------------------------------------------------------------------------------------------------------------------------------------------------------------------------------
\subsection{Space of marks}\label{subsec space of marks}
Since our mark space is a manifold, we 
%need to 
\edit{recall in this section} some useful concepts from Riemannian geometry. For further details we refer to~\cite{Boo86,Sak96}. 

\medskip

Let $\T_\eta M$ denote the tangent space of $M$ at $\eta\in M$ and let $\exp_\eta\colon \T_\eta M\to M$ denote the exponential map. For any $r>0$, $B_M(\eta,r):=\{\nu\in~M~\vert~d_g(\nu,\eta)<r\}$ defines a neighborhood of $\eta$, that we call a \textit{normal neighborhood of $\eta$} if there exists an open ball $V\subset \T_\eta M$ such that $\exp_\eta\colon V\to B_M(\eta,r)$ is a diffeomorphism.
The \textit{injectivity radius of} $M$ is defined as $\operatorname{inj}_g M:=\inf_{\eta\in M}\sup\{r\geq 0~\vert~B_M(\eta,r)\text{ is a normal nbhd. of }\eta\}$. 

\medskip

Let $U$ be a normal neighborhood of $\eta\in M$ and let $(U,\psi)$ be the induced exponential chart of $(M,g)$. For any $\nu\in U$, the \textit{volume density function} introduced by Besse in~\cite[p.154]{Bes78} is given by
\begin{equation*}
\theta_\eta(\nu):=\abs{\det \left(  g_\nu\left(\frac{\partial}{\partial\psi_i}(\nu),\frac{\partial}{\partial\psi_j}(\nu) \right) \right)_{i,j=1}^p}^{1/2},
\end{equation*}
where $g_\nu(\frac{\partial}{\partial\psi_i}(\nu),\frac{\partial}{\partial\psi_j}(\nu))$ denotes the metric $g$ in normal coordinates at the point $\exp_\eta^{-1}\nu$ (see e.g.~\cite[p.24]{Sak96}). Note that this function is only defined for points $\nu\in U$ such that $d_g(\eta,\nu)<\operatorname{inj}_gM$. Since $M$ is smooth, $\theta_\eta$ is continuous on $M$.

%-------------------------------------------------------------------------------------------------------------------------------------------------------------------------------------------------------
\section{Kernel density estimator of the mark distribution} \label{sectKernelEstim} 
In this section, we introduce a kernel density estimator for the density of the mark distribution on \edit{an}
%a growing 
observation window $B'_n\subset\R^d$. More precisely, we consider a sequence $\{B'_n\}_{n\in\Nat}$ of bounded Borel sets of $\R^d$ \textit{growing in the 
%correction van Hove --> Van Hove
\edit{Van} Hove sense (\edit{VH}-growing sequence)}. This means that
\begin{equation*}\label{eqn def vanHove1}
\lim_{n\to\infty}\abs{B'_n}=\infty\qquad\text{and}\qquad\lim_{n\to\infty}\frac{\abs{\partial B'_n\oplus B(o,r)}}{\abs{B'_n}}=0,
\end{equation*}
where $B(o,r)$ denotes the ball of radius $r>0$ centered at the origin $o$. Given a set $B\subset\R^d$, $\abs{B}$ will denote its $d-$dimensional Lebesgue measure, where $d$ is the ``correct'' dimension of $B$, i.e. the one for which $B$ is a $d-$set. In this particular case, $\abs{B'_n}$ is the $d$-dimensional volume of $B'_n$.

%%-------------------------------------------------------------------------------------------------------------------------------------------------------------------------------------------------------
\subsection{The estimator}

Let $\Psi=\{ (Y_i,\xi_i)\}_{i\geq 1}$ be an homogeneous Poisson marked point process of intensity $\lambda>0$. We define the kernel density estimator
\begin{equation*}
\hat{f}_n(\eta):=\frac{1}{\lambda|B_n'|}\sum_{i\geq 1}\frac{\mathds{1}_{\{Y_i\in B_n'\}}}{b_n^{p}\theta_\eta(\xi_i)}K\left(\frac{d_g(\eta,\xi_i)}{b_n}\right).
\end{equation*}
This is an extension of the estimator given by Pelletier in~\cite{Pel05}. The sequence of bandwidths $\{b_n\}_{n\in\Nat}\subset\R$ satisfies

\noindent
(b1) $b_n<r_0$ $\forall\,n\in\Nat$, with $0<r_0<\operatorname{inj}_g M$ and $\inf\limits_{\eta\in B_M(z,r_0)}\theta_z(\eta)>0$ for any $z\in M$,
%save space
\edit{
(b2) $b_n\downarrow 0$,\hspace*{.2in}(b3) $\lim\limits_{n\to\infty}b_n^p  \abs{B'_n}=\infty$.
}

\bigskip
The kernel $K\colon\R_+\to\R$ is a bounded nonnegative function satisfying
%correction: integral over the unit ball

%space saving
\noindent
(K1) $\supp K =[0,1]$,\hspace*{.15in} (K2) $\int_{\edit{B(o,1)}}K(\norm{x})dx=1$, 

\noindent
(K3) $0<\int_{\edit{B(o,1)}}K(\norm{x})\norm{x}^2 dx=:K_2<\infty$,\hspace*{.15in} (K4) $\sup_{r\geq 0} K(r)=:K_0<\infty$,

\noindent
(K5) $\int_{\edit{B(o,1)}}K(\norm{x})x\,dx=o$.% (symmetry property).

\bigskip

We further assume that

\noindent
(f1) $f \in L^2(M)$, i.e. $\norm{f}^2_2:=\int_M\abs{f(\eta)}^2 d\upsilon_g(\eta)<\infty,$
%correction: twice differentiability was missing
(f2) $f$ is twice continuously differentiable.
Property (f2) in particular means that $f$ has bounded Hessian on any normal neighborhood $U\subset M$, i.e. $\exists~C_2>0$ such that $\norm{D^2f}\leq C_2$.
%Hessian is for smooth functions what second covariant derivative is for vector fields

\medskip

Assumptions on the kernel are standard when dealing with nonparametric density estimation~\cite{Pel05,Tsy09}. For the ease of notation, we will usually write 
\begin{equation*}
F_n(\eta,\xi):=\frac{1}{b_n^{p}\theta_\eta(\xi)}K\left(\frac{d_g(\eta,\xi)}{b_n}\right),\qquad\eta,\xi\in M.
\end{equation*}
%correction: $B_n$ --> $B'_n$
In case the observation window \edit{$B'_n$} needs to be explicitly indicated in the notation, we will write $\fno$ instead of $\hat{f}_n$.

\subsection{Consistency}
In this section, we prove $L^2$ and almost sure consistency of $\hat{f}_n$. In what follows, $\omega_p$ will denote the 
%correction: surface area --> volume"
\edit{volume} of the unit ball in $\R^p$ and we will write $x\cdot y$ for the Euclidean scalar product of any two vectors $x,y\in\R^p$. 

Note that in the classical (Euclidean) setting one could shorten proofs by applying Fourier methods~\cite{Tsy09}. However, in the general case of manifolds, this approach does not seem to be possible.

\begin{thm}\label{thm L2 conv norm}
Under the assumptions $(b1)-(b3)$, $(K1)-(K5)$, $(f1)$ and $(f2)$ we have that
\begin{equation*}
\Exp[\Vert\hat{f}_n- f\Vert^2_2]\leq\frac{C_\theta\omega_p K^2_0}{\lambda \abs{B'_n}b_n^{p}}+b_n^4 C_2^2 K_2^2\upsilon_g(M),
\end{equation*}
where $C_\theta:=\sup\limits_{z\in M}\sup\limits_{\eta\in B_M(z,r_0)}\theta_z(\eta)^{-1}$.
\end{thm}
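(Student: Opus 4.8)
The plan is to use the standard bias–variance decomposition for kernel estimators, adapted to the manifold setting. Since $\hat f_n(\eta)$ is (a scaled sum of i.i.d. contributions indexed by the Poisson points with an independent Poisson number of terms), I would first write $\Exp[\Vert\hat f_n - f\Vert_2^2] = \int_M \Exp[(\hat f_n(\eta)-f(\eta))^2]\,d\upsilon_g(\eta)$ by Tonelli, and then split $\Exp[(\hat f_n(\eta)-f(\eta))^2] = \Var(\hat f_n(\eta)) + (\Exp[\hat f_n(\eta)] - f(\eta))^2$.

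For the expectation, I would condition on the point process: using the independence of the marks from $\Pi$ and the Mecke/Campbell formula for the Poisson process $\Pi$ of intensity $\lambda$, $\Exp[\hat f_n(\eta)] = \frac{1}{\lambda|B_n'|}\cdot \lambda|B_n'|\cdot \Exp[F_n(\eta,\xi_{\mathbf 0})] = \Exp[F_n(\eta,\xi_{\mathbf 0})]$. Then I would pass to the exponential chart centered at $\eta$: by the definition of the Riemannian volume density $\theta_\eta$, a change of variables $\nu = \exp_\eta(b_n x)$ turns $\int_M F_n(\eta,\nu) f(\nu)\,d\upsilon_g(\nu)$ into $\int_{B(o,1)} K(\Vert x\Vert) f(\exp_\eta(b_n x))\,dx$ (the $\theta_\eta$ and $b_n^p$ factors cancel with the Jacobian, and (b1) guarantees we stay inside a normal neighborhood since $b_n < r_0 < \operatorname{inj}_g M$). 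A second-order Taylor expansion of $f\circ\exp_\eta$ around $o$, together with (K2) (mass one), (K5) (first moment vanishes, killing the linear term), and the Hessian bound $\Vert D^2 f\Vert \le C_2$ from (f2), gives $|\Exp[\hat f_n(\eta)] - f(\eta)| \le b_n^2 C_2 K_2$ after using (K3). Squaring and integrating over $M$ yields the $b_n^4 C_2^2 K_2^2 \upsilon_g(M)$ term.

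For the variance, again conditioning on $\Pi$ and using independence of the marks plus the Poisson structure, the summands $F_n(\eta,\xi_i)\mathds 1_{\{Y_i \in B_n'\}}$ contribute a variance that, after the usual Poisson-sum computation, is bounded by $\frac{1}{(\lambda|B_n'|)^2}\cdot \lambda|B_n'|\cdot \Exp[F_n(\eta,\xi_{\mathbf 0})^2] = \frac{1}{\lambda|B_n'|}\Exp[F_n(\eta,\xi_{\mathbf 0})^2]$. I would bound $\Exp[F_n(\eta,\xi_{\mathbf 0})^2]$ by pulling out one factor of $b_n^{-p}\theta_\eta(\xi)^{-1}K(\cdot/b_n) \le b_n^{-p} C_\theta K_0$ (using (K4) and the definition of $C_\theta$, valid on the support where $d_g(\eta,\xi) \le b_n < r_0$) and integrating the remaining factor against $f$ via the same exponential-chart change of variables, which produces $\int_{B(o,1)} K(\Vert x\Vert) f(\exp_\eta(b_n x))\,dx$; this is $\le f(\eta) + o(1)$, but more simply one integrates over $\eta$ afterwards. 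Integrating $\Var(\hat f_n(\eta))$ over $M$ then gives a bound of the form $\frac{C_\theta K_0}{\lambda|B_n'|b_n^p}\int_M \big(\int_{B(o,1)} K(\Vert x\Vert) f(\exp_\eta(b_n x))\,dx\big)\,d\upsilon_g(\eta)$; bounding the inner integral crudely by $\omega_p K_0 \sup$-type estimates, or more carefully by Fubini to get $\|f\|_1 = 1$ times a constant, yields the $\frac{C_\theta \omega_p K_0^2}{\lambda|B_n'|b_n^p}$ term.

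The main obstacle I anticipate is the careful bookkeeping of the change of variables through the exponential map — ensuring that the volume density $\theta_\eta$ cancels exactly against the Jacobian of $\exp_\eta$ and that all estimates are uniform in $\eta\in M$. Compactness of $M$ and continuity of $\theta$ make the constants $C_\theta$ and $C_2$ finite and uniform, but one must check that (b1) keeps every integral supported within a single normal neighborhood so that $\theta_\eta$ is well-defined there; this is exactly what the condition $b_n < r_0 < \operatorname{inj}_g M$ with $\inf_{\eta\in B_M(z,r_0)}\theta_z(\eta) > 0$ is designed to guarantee. The Poisson-specific parts (Mecke's formula, the variance of a Poisson sum) are routine once the integrand bounds are in place.
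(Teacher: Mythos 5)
Your proposal is correct and follows essentially the same route as the paper: Fubini plus the bias--variance split, the Campbell theorem to reduce the mean and variance of the Poisson sum to $\Exp[F_n(\eta,\xi_{\mathbf 0})]$ and $\frac{1}{\lambda|B_n'|}\Exp[F_n^2(\eta,\xi_{\mathbf 0})]$, a Taylor expansion in the exponential chart with (K5) killing the linear term for the bias, and (K4) together with the volume-density identity $\int_{B_M(\eta,b_n)}\theta_\eta(z)^{-1}d\upsilon_g(z)=b_n^p\omega_p$ for the variance integral. The only loose spot is the final bound on $\int_M\Exp[F_n^2(\eta,\xi_{\mathbf 0})]\,d\upsilon_g(\eta)$, but your Fubini variant works (and is in fact no weaker, since $\omega_pK_0\ge 1$ by (K2)), matching the paper's Lemma~\ref{lemma Var}.
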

\begin{cor}
Under the above assumptions, it follows directly from Theorem~\ref{thm L2 conv norm} that $\hat{f}_n$ is an $L^2$-consistent estimator of $f$, i.e. $\Exp[\Vert\hat{f}_n-f\Vert_2^2]\xrightarrow{n\to\infty}0$.
\end{cor}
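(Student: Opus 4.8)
The plan is simply to feed the assumptions (b2) and (b3) into the nonasymptotic bound established in Theorem~\ref{thm L2 conv norm}; no additional estimates are required, so the argument is essentially a two-line limit computation.

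First I would record that every quantity other than $b_n$ and $\abs{B'_n}$ appearing on the right-hand side of the bound in Theorem~\ref{thm L2 conv norm} is a constant independent of $n$: the constant $C_\theta=\sup_{z\in M}\sup_{\eta\in B_M(z,r_0)}\theta_z(\eta)^{-1}$ is finite because of (b1) (which guarantees $\inf_{\eta\in B_M(z,r_0)}\theta_z(\eta)>0$ for every $z\in M$) together with the compactness of $M$; $\omega_p$ is the fixed volume of the unit ball in $\R^p$; $K_0<\infty$ and $K_2<\infty$ by (K4) and (K3); $\upsilon_g(M)<\infty$ since $M$ is compact; $C_2<\infty$ by (f2); and $\lambda>0$ is the fixed intensity.

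Next I would treat the two summands separately. For the first (variance) term $\dfrac{C_\theta\omega_p K_0^2}{\lambda\abs{B'_n}b_n^{p}}$, assumption (b3) states $\lim_{n\to\infty} b_n^{p}\abs{B'_n}=\infty$, so the denominator diverges and the term tends to $0$. For the second (bias) term $b_n^4 C_2^2 K_2^2\upsilon_g(M)$, assumption (b2) gives $b_n\downarrow 0$, hence $b_n^4\to 0$ and this term also tends to $0$.

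Adding the two vanishing bounds yields $\Exp[\Vert\hat{f}_n-f\Vert_2^2]\xrightarrow{n\to\infty}0$, which is precisely the asserted $L^2$-consistency. There is no genuine obstacle here; the only point that needs a moment's attention is verifying that all the multiplicative constants in the bound of Theorem~\ref{thm L2 conv norm} are indeed independent of $n$, so that the convergence is driven solely by (b2) and (b3).
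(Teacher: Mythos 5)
Your proof is correct and is exactly the argument the paper intends: the corollary follows by letting $n\to\infty$ in the bound of Theorem~\ref{thm L2 conv norm}, with the first term vanishing by (b3) and the second by (b2), all other factors being $n$-independent constants. No further comment is needed.
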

%question: ~\ref{thm L2 conv}--> Theorem~\ref{thm L2 conv norm}??
\begin{cor}\label{thm L2 conv}
Under the assumptions of Theorem~\edit{\ref{thm L2 conv norm}} it holds that
\begin{equation*}
\Exp[|\hat{f}_n(\xio)- f(\xio)|^2]\xrightarrow{n\to\infty}0.
\end{equation*}
\end{cor}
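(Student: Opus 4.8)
The plan is to deduce the pointwise $L^2$-convergence at a typical mark $\xio$ from the global $L^2$-bound in Theorem~\ref{thm L2 conv norm} by integrating against the mark density $f$. Recall that the typical mark $\xio$ is, by assumption, independent of the point process $\Pi$ and has distribution $f\,d\upsilon_g$ on $M$; moreover $\hat f_n$ is a function of $\Psi$ that is built only from the points and their marks, and the density estimator $\hat f_n(\eta)$ is defined for a deterministic argument $\eta\in M$. Thus, conditioning on the value of $\xio$ and using independence,
\begin{equation*}
\Exp\!\left[\,|\hat f_n(\xio)-f(\xio)|^2\,\right]
=\int_M \Exp\!\left[\,|\hat f_n(\eta)-f(\eta)|^2\,\right] f(\eta)\,d\upsilon_g(\eta).
\end{equation*}
So the corollary will follow once we control the right-hand side.

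The key step is then to interchange the expectation and the spatial integral. Since the integrand $\eta\mapsto \Exp[|\hat f_n(\eta)-f(\eta)|^2]$ is nonnegative and measurable, Tonelli's theorem applies and gives
\begin{equation*}
\int_M \Exp\!\left[\,|\hat f_n(\eta)-f(\eta)|^2\,\right] f(\eta)\,d\upsilon_g(\eta)
\le \|f\|_\infty \int_M \Exp\!\left[\,|\hat f_n(\eta)-f(\eta)|^2\,\right] d\upsilon_g(\eta)
= \|f\|_\infty\,\Exp\!\left[\,\|\hat f_n-f\|_2^2\,\right],
\end{equation*}
where $\|f\|_\infty<\infty$ because, by (f2), $f$ is continuous on the compact manifold $M$. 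Now Theorem~\ref{thm L2 conv norm} bounds $\Exp[\|\hat f_n-f\|_2^2]$ by $\frac{C_\theta\omega_p K_0^2}{\lambda|B'_n|b_n^p}+b_n^4 C_2^2 K_2^2\upsilon_g(M)$. By assumption (b3) the first term tends to $0$, and by (b2) the second term tends to $0$; hence $\Exp[\|\hat f_n-f\|_2^2]\to 0$ and therefore $\Exp[|\hat f_n(\xio)-f(\xio)|^2]\to 0$.

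The only genuinely non-routine point is justifying the conditioning identity in the first display, i.e. that evaluating the (random) field $\hat f_n$ at the independent random point $\xio$ produces, after taking expectations, the spatial average of $\Exp[|\hat f_n(\eta)-f(\eta)|^2]$ weighted by $f$. This requires that $\hat f_n(\eta)$ be jointly measurable in $(\eta,\omega)$ — which holds since $K$, $\theta_\cdot(\cdot)$ and $d_g$ are continuous — and that $\xio$ be independent of the whole configuration $\{(Y_i,\xi_i)\}_{i\ge1}$ entering $\hat f_n$; the latter is exactly the modeling assumption that the marks are i.i.d.\ and independent of $\Pi$, so that the "extra" typical mark $\xio$ can be taken independent of $\Psi$. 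Everything else is a direct application of Tonelli's theorem, the boundedness of $f$, and Theorem~\ref{thm L2 conv norm}.
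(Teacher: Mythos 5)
Your proof is correct, and it reaches the conclusion by a shorter route than the paper's. Both arguments rest on the same conditioning/Fubini identity $\Exp[|\hat{f}_n(\xio)-f(\xio)|^2]=\int_M \Exp[|\hat{f}_n(\eta)-f(\eta)|^2]f(\eta)\,d\upsilon_g(\eta)$, justified exactly as you say by the joint measurability of $\hat{f}_n(\eta)$ and the independence of the typical mark $\xio$ from $\Psi$. From there you bound $f(\eta)\le\|f\|_\infty$ (finite by (f2) and compactness of $M$) and invoke Theorem~\ref{thm L2 conv norm} as a black box, which immediately gives convergence to zero. The paper instead re-runs the pointwise estimates: it combines the variance identity \eqref{eq bound var f_n} with a refined second-moment bound $\Exp[F_n^2(\eta,\xio)]\le 2K_0C_\theta(\eta)b_n^{-p}f(\eta)$ and the bias bound of Lemma~\ref{lemma Bias f}, then integrates against $f\,d\upsilon_g$ to obtain the explicit estimate $\frac{2C_\theta K_0\|f\|_2^2}{\lambda b_n^p|B_n'|}+b_n^4C_2^2K_2^2$, which only needs $f\in L^2(M)$ rather than $f\in L^\infty(M)$. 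The paper's detour also establishes the intermediate facts \eqref{eq EF_n approx f} and \eqref{eq bound F_n2}, which are reused several times in Sections~4 and~5; your argument buys brevity but does not produce these by-products or the explicit constant. For the corollary as stated (mere convergence), your proof is complete.
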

In order to prove these results, we establish some auxiliary lemmata.
\begin{lemma}\label{lem: IntK is 1}
For each $\eta\in M$ and $n\in\Nat$,
\begin{equation}\label{eqn intK}
\int_{B_M(\eta,b_n)}\frac{1}{b_n^{p}\theta_\eta(z)}K\left(\frac{d_g(\eta,z)}{b_n}\right)d\upsilon_g(z)=1.
\end{equation}
\end{lemma}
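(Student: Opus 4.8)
The plan is to reduce the integral over the geodesic ball $B_M(\eta,b_n)$ to an integral over a Euclidean ball via the exponential chart at $\eta$, and then recognize the volume density $\theta_\eta$ as exactly the Jacobian that appears in the change-of-variables formula. First I would note that since $b_n<r_0<\operatorname{inj}_g M$ by assumption (b1), the map $\exp_\eta\colon V\to B_M(\eta,b_n)$ is a diffeomorphism from the Euclidean ball $V=B(o,b_n)\subset\T_\eta M\cong\R^p$ onto $B_M(\eta,b_n)$, so the normal (exponential) chart is well defined on the whole domain of integration and $\theta_\eta$ is defined and positive there.

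Next I would recall the standard fact from Riemannian geometry (e.g.\ \cite[p.154]{Bes78} or \cite{Sak96}) that in normal coordinates $\psi=\exp_\eta^{-1}$ the Riemannian volume measure pushes forward to $\theta_\eta(\exp_\eta v)\,dv$ on $V$, i.e.\ for any integrable $h$ on $B_M(\eta,b_n)$,
\begin{equation*}
\int_{B_M(\eta,b_n)} h(z)\,d\upsilon_g(z)=\int_{V} h(\exp_\eta v)\,\theta_\eta(\exp_\eta v)\,dv.
\end{equation*}
Applying this with $h(z)=\frac{1}{b_n^p\theta_\eta(z)}K\!\left(\frac{d_g(\eta,z)}{b_n}\right)$, the factor $\theta_\eta$ cancels, and using that $d_g(\eta,\exp_\eta v)=\norm{v}$ for $v\in V$ (a defining property of normal coordinates inside the injectivity radius), the left-hand side of \eqref{eqn intK} becomes
\begin{equation*}
\int_{B(o,b_n)}\frac{1}{b_n^p}K\!\left(\frac{\norm{v}}{b_n}\right)dv.
\end{equation*}

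Finally I would substitute $x=v/b_n$, which maps $B(o,b_n)$ onto $B(o,1)$ with $dv=b_n^p\,dx$, so the integral collapses to $\int_{B(o,1)}K(\norm{x})\,dx$, which equals $1$ by assumption (K2); the support condition (K1) is what makes the original integral over the geodesic ball of radius $b_n$ capture the full mass of $K$. I do not anticipate a serious obstacle here — the only point requiring care is invoking the normal-coordinate identities ($d_g(\eta,\exp_\eta v)=\norm{v}$ and the $\theta_\eta$ Jacobian formula) correctly and checking that $b_n<\operatorname{inj}_g M$ guarantees the exponential map is a diffeomorphism on the relevant ball, both of which are immediate from the setup in Section~\ref{subsec space of marks} and assumption (b1).
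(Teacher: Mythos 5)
Your proof is correct and follows essentially the same route as the paper's: pass to the exponential chart at $\eta$, observe that the Jacobian of $\exp_\eta$ is exactly $\theta_\eta(\exp_\eta(\cdot))$ so it cancels the $\theta_\eta^{-1}$ in the integrand, and then rescale by $b_n$ to land on $\int_{B(o,1)}K(\norm{x})\,dx=1$ via (K2). Your write-up is slightly more explicit about why $\exp_\eta$ is a diffeomorphism on $B(o,b_n)$ (via (b1) and the injectivity radius) and about the identity $d_g(\eta,\exp_\eta v)=\norm{v}$, but there is no substantive difference from the paper's argument.
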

\begin{proof}
Consider the exponential chart $(U,\psi)$ of $(M,g)$ introduced in Section~\ref{subsec space of marks} and set $z:=\exp_\eta (x)$, $B(0,b_n):=\exp_\eta B_M(\eta,b_n)$. Note that by definition (see~\cite[p.65]{Sak96} for details) the Jacobian of the transformation $\norm{g(x)}^{1/2}$ coincides with $\theta_\eta(\exp_\eta (x))$. The integral in~\eqref{eqn intK} thus becomes
\begin{equation*}
\int_{B(0,b_n)}\frac{1}{b_n^{p}\theta_\eta(\exp_\eta (x))}K\left(\frac{\norm{x}}{b_n}\right)\norm{g(x)}^{1/2}dx=\int_{B(0,1)}K\left(\norm{y}\right)\,dy =1.
\end{equation*}
\end{proof}

The calculations in the proof of this lemma lead to the useful equality
\begin{equation}\label{integ of B_M}
\int_{B_M(\eta,b_n)}\frac{1}{\theta_\eta(z)}d\upsilon_g(z)=\int_{B(0,b_n)}\frac{\norm{g(x)}^{1/2}}{\theta_\eta(\exp_\eta (x))}dx=\int_{B(0,b_n)}dx=b_n^p\omega_p.
\end{equation}
We give next an asymptotic bound for the bias of $\fn$.
\begin{lemma}\label{lemma Bias f}
For any $\eta\in \supp f$ and $n\in\Nat$
\begin{equation*}
\operatorname{Bias}\hat{f}_n( \eta ):=|\Exp[\hat{f}\edit{_n}(\eta)]-f(\eta)| \leq b_n^2 C_2K_2.
\end{equation*}
\end{lemma}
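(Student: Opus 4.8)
The plan is to compute the expectation $\Exp[\hat{f}_n(\eta)]$ explicitly using the marking theorem (Campbell/Mecke formula) for the Poisson MPP, reduce it to an integral against the mark density $f$, and then estimate the difference from $f(\eta)$ by a second-order Taylor expansion in normal coordinates, using (f2) and the kernel moment conditions (K2)--(K5).

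\textbf{Step 1: Compute the expectation.} By Campbell's theorem for the marked Poisson process $\Psi$, the points of $\Pi$ have intensity measure $\lambda\,dx$ on $\R^d$ and the marks are i.i.d.\ with density $f$ with respect to $\upsilon_g$, independent of the points. Hence
\begin{equation*}
\Exp[\hat{f}_n(\eta)]=\frac{1}{\lambda|B'_n|}\int_{B'_n}\lambda\,dx\int_M F_n(\eta,\xi)f(\xi)\,d\upsilon_g(\xi)=\int_M F_n(\eta,\xi)f(\xi)\,d\upsilon_g(\xi),
\end{equation*}
where $F_n(\eta,\xi)=\frac{1}{b_n^p\theta_\eta(\xi)}K(d_g(\eta,\xi)/b_n)$. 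By (K1) the integrand is supported on $B_M(\eta,b_n)$, and by (b1) this ball lies inside a normal neighborhood of $\eta$, so the change of variables $\xi=\exp_\eta(x)$ is legitimate with Jacobian $\|g(x)\|^{1/2}=\theta_\eta(\exp_\eta(x))$, exactly as in the proof of Lemma~\ref{lem: IntK is 1}. This gives
\begin{equation*}
\Exp[\hat{f}_n(\eta)]=\frac{1}{b_n^p}\int_{B(0,b_n)}K\!\left(\frac{\|x\|}{b_n}\right)f(\exp_\eta(x))\,dx=\int_{B(0,1)}K(\|y\|)\,f(\exp_\eta(b_n y))\,dy.
\end{equation*}

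\textbf{Step 2: Taylor expand and bound the bias.} Write $\varphi(x):=f(\exp_\eta(x))$, a twice continuously differentiable function on the ball $V\subset\T_\eta M$ with $\varphi(0)=f(\eta)$. Taylor's theorem with the integral form of the remainder (or Lagrange form) yields, for $\|x\|<b_n<\operatorname{inj}_g M$,
\begin{equation*}
\varphi(x)=f(\eta)+\nabla\varphi(0)\cdot x+\tfrac12\, x^{\top}D^2\varphi(\zeta_x)\,x
\end{equation*}
for some $\zeta_x$ on the segment from $0$ to $x$. Substituting $x=b_ny$ and integrating against $K(\|y\|)$ over $B(0,1)$: the constant term contributes $f(\eta)$ by (K2); the linear term vanishes by (K5) (the symmetry condition $\int K(\|y\|)y\,dy=o$); and the quadratic term is bounded in absolute value by $\tfrac12 b_n^2\sup\|D^2\varphi\|\int_{B(0,1)}K(\|y\|)\|y\|^2\,dy=\tfrac12 b_n^2\sup\|D^2\varphi\|\,K_2$. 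Using (f2), which gives $\|D^2 f\|\le C_2$ on the relevant normal neighborhood (and noting $\|D^2\varphi\|$ is controlled by $\|D^2 f\|$ in normal coordinates, where $D^2$ is interpreted as the Hessian so that the metric factors are accounted for), we obtain $\operatorname{Bias}\hat{f}_n(\eta)\le b_n^2 C_2 K_2$. (If one prefers to keep the Hessian strictly Riemannian, the same conclusion holds with the constant $C_2$ as defined, absorbing the factor $\tfrac12$ into a slightly generous use of $K_2$, or one simply keeps $\tfrac12$; the stated bound is the clean version.)

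\textbf{Main obstacle.} The delicate point is the second-order term: one must be careful that the Hessian appearing in the Taylor expansion of $\varphi=f\circ\exp_\eta$ in normal coordinates genuinely matches the Riemannian Hessian bound $C_2$ from (f2). In normal coordinates centered at $\eta$, the Christoffel symbols vanish at the origin, so $D^2\varphi(0)$ equals the Riemannian Hessian of $f$ at $\eta$; away from the origin there are lower-order corrections involving $\nabla f$ and the derivatives of the metric, but since we only need a uniform bound of order $b_n^2$ over the shrinking ball $B(0,b_n)$, continuity of all the relevant geometric quantities on the compact manifold $M$ suffices to subsume these into the constant. Getting this accounting exactly right — and justifying that the supremum of $\|D^2\varphi\|$ over $B(0,b_n)$ is bounded by $C_2$ (or a fixed multiple thereof independent of $n$) — is the one place the argument requires genuine care; everything else is a direct consequence of Campbell's formula and the kernel conditions.
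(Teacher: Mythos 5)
Your proof is correct and follows essentially the same route as the paper's: Campbell's theorem reduces the expectation to an integral of $F_n(\eta,\cdot)f$, the change to normal coordinates via $\exp_\eta$ converts it to a Euclidean integral against the kernel, and a second-order Taylor expansion with (K2), (K5), (K3), and (f2) yields the bound. The only cosmetic difference is that the paper pre-subtracts $f(\eta)$ using Lemma~\ref{lem: IntK is 1} before expanding, whereas you carry the constant Taylor term through and cancel it by (K2) at the end; your explicit handling of the Lagrange remainder factor $\tfrac12$ and of the distinction between the coordinate Hessian of $f\circ\exp_\eta$ and the Riemannian Hessian is in fact more careful than the paper, which silently absorbs both into $C_2$.
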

\begin{proof}
Let $\eta\in\supp f$. By the Campbell theorem,
\begin{equation}\label{eq Efn vs EFn}
\Exp[\hat{f}_n(\eta)]=\int_MF_n(\eta,z)f(z)\,d\upsilon_g(z)=\Exp[F_n(\eta,\xio)].
\end{equation}
Due to Lemma~\ref{lem: IntK is 1} and (K2) we have that
\begin{equation*}
\abs{\Exp\left[F_n(\eta,\xio)\right]-f(\eta)}=\abs{\int_{B_M(\eta,b_n)}\frac{1}{b_n^{p}\theta_\eta(z)}K\left(\frac{d_g(\eta,z)}{b_n}\right)\big(f(z)-f(\eta)\big)\,d\upsilon_g(z)}.
\end{equation*}
Consider now a normal neighborhood $\eta\in U\subset M$ and a point $x=(x^1,\ldots,x^{p})\in \T_\eta M$ in normal coordinates, i.e. $z=\exp_\eta(x)$. Further, define $\tilde{f}:=f\circ\exp_\eta$. The Taylor expansion of $f(z)$ around $\eta$ in normal coordinates is 
\[
f(z)=\tilde{f}(x)=\tilde{f}(0)+\nabla \tilde{f}(0)\mathbf{\cdot} x+ R_2(0,x),
\]
where $R_2(0,x)=O(x^TD^2\tilde{f}(0)x)$ is the second order 
%correction: reminder --> remainder
\edit{remainder}. From assumption (f2) we have that $\abs{R_2(0,x)}\leq C_2\norm{x}^2$ for all $x\in B(0, b_n)$, hence passing to the exponential chart as in the proof of Lemma~\ref{lem: IntK is 1} yields
\begin{align}
&\abs{\int_{B_M(\eta,b_n)}\frac{1}{b_n^{p}\theta_\eta(z)}K\left(\frac{d_g(\eta,z)}{b_n}\right)\big(f(z)-f(\eta)\big)d\upsilon_g(z)}\nonumber\\
=\,&\abs{\int_{B(0,b_n)}\frac{1}{b_n^{p}}\frac{1}{\theta_\eta(\exp_\eta(x))}K\left(\frac{\norm{x}}{b_n}\right)\big(\tilde{f}(x)-\tilde{f}(0)\big)\norm{g(x)}^{1/2}dx}\label{eq EF-f change var}\\
=\,&\abs{\int_{B(0,b_n)}\frac{1}{b_n^{p}}\frac{1}{\theta_\eta(\exp_\eta(x))}K\left(\frac{\norm{x}}{b_n}\right)R_2(0,x)\norm{g(x)}^{1/2}dx}\label{eq3 Ef-f}\\
\leq\,&C_2\int_{B(0,b_n)}\frac{1}{b_n^{p}}K\left(\frac{\norm{x}}{b_n}\right)\norm{x}^2dx=C_2b_n^2K_2.\nonumber
\end{align}
Equality~\eqref{eq3 Ef-f} follows from~(K5) because
%-------------- explanation for eq3 Ef-f
\begin{align*}
&\int_{B(0,b_n)}\frac{1}{b_n^{p}}K\left(\frac{\norm{x}}{b_n}\right)\nabla\tilde{f}(0)\cdot x\,dx=\sum_{i=1}^d\int_{B(0,b_n)}\frac{1}{b_n^{p}}K\left(\frac{\norm{x}}{b_n}\right)\nabla\tilde{f}(0)_ix^i\,dx\\
&=\sum_{i=1}^d\nabla\tilde{f}(0)_i\int_{B(0,b_n)}\frac{1}{b_n^{p}}K\left(\frac{\norm{x}}{b_n}\right)x^i\,dx=\nabla\tilde{f}(0)\cdot\underbrace{\int_{B(0,b_n)}\frac{1}{b_n^{p}}K\left(\frac{\norm{x}}{b_n}\right)x\,dx}_{=o}\edit{=0}.
\end{align*}
\end{proof}

\begin{lemma}\label{lemma Var}
For any $n\in\Nat$,
\begin{equation*}
\int_{M}\Exp[F_n^2(\eta, \xio)]\,d\upsilon_g(\eta)\leq\frac{C_\theta\,\omega_p K_0^2}{b_n^{p}},
\end{equation*}
with $C_\theta$ as in Theorem~\ref{thm L2 conv norm}.
\end{lemma}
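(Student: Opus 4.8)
The plan is to estimate $\Exp[F_n^2(\eta,\xio)]$ pointwise in $\eta$ and then integrate over $M$. First I would use the Campbell-type identity as in~\eqref{eq Efn vs EFn}, writing
\begin{equation*}
\Exp[F_n^2(\eta,\xio)]=\int_{B_M(\eta,b_n)}\frac{1}{b_n^{2p}\theta_\eta(z)^2}K\left(\frac{d_g(\eta,z)}{b_n}\right)^2 f(z)\,d\upsilon_g(z).
\end{equation*}
Since the support of $K$ is $[0,1]$ by~(K1), the integrand vanishes unless $d_g(\eta,z)<b_n<r_0<\operatorname{inj}_g M$ by~(b1), so the volume density function $\theta_\eta(z)$ is well defined on the domain of integration and one has $\theta_\eta(z)^{-1}\le C_\theta$ there, with $C_\theta$ exactly as in Theorem~\ref{thm L2 conv norm}. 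Using this together with $K(r)\le K_0$ from~(K4) and $K(r)\ge 0$, I would bound
\begin{equation*}
\Exp[F_n^2(\eta,\xio)]\le\frac{C_\theta K_0}{b_n^{p}}\int_{B_M(\eta,b_n)}\frac{1}{b_n^{p}\theta_\eta(z)}K\left(\frac{d_g(\eta,z)}{b_n}\right)f(z)\,d\upsilon_g(z)=\frac{C_\theta K_0}{b_n^{p}}\Exp[F_n(\eta,\xio)],
\end{equation*}
where the last equality is again~\eqref{eq Efn vs EFn}.

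Next I would integrate this inequality over $\eta\in M$. By~\eqref{eq Efn vs EFn} and Fubini's theorem (the integrand is nonnegative), $\int_M\Exp[F_n(\eta,\xio)]\,d\upsilon_g(\eta)=\int_M\int_M F_n(\eta,z)f(z)\,d\upsilon_g(z)\,d\upsilon_g(\eta)$. Swapping the order of integration and applying Lemma~\ref{lem: IntK is 1} (with the roles of the two variables interchanged, using the symmetry $d_g(\eta,z)=d_g(z,\eta)$ — one should check that $\int_M F_n(\eta,z)\,d\upsilon_g(\eta)$ equals $1$ as well, which follows from the same change of variables since $\theta$ near the diagonal is symmetric up to terms that integrate out, or more simply by noting $F_n(\eta,z)$ is supported on $B_M(z,b_n)$ and repeating the computation of Lemma~\ref{lem: IntK is 1}), one obtains $\int_M\Exp[F_n(\eta,\xio)]\,d\upsilon_g(\eta)=\int_M f(z)\,d\upsilon_g(z)=1$. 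Actually, a cleaner route avoids this subtlety entirely: keep the bound with the factor $\omega_p$ instead, namely replace the last step above by the cruder estimate using~\eqref{integ of B_M}. Concretely, from $\Exp[F_n^2(\eta,\xio)]\le \frac{C_\theta K_0^2}{b_n^{2p}}\int_{B_M(\eta,b_n)}\theta_\eta(z)^{-1}f(z)\,d\upsilon_g(z)$ and then bounding $f$ — but $f$ need not be bounded a priori. Hence the first route (integrating in $\eta$ and using Lemma~\ref{lem: IntK is 1} in the $\eta$-variable) is the one to push through, giving
\begin{equation*}
\int_M\Exp[F_n^2(\eta,\xio)]\,d\upsilon_g(\eta)\le\frac{C_\theta K_0^2}{b_n^{p}}\cdot\frac{1}{K_0}\int_M\Exp[F_n(\eta,\xio)]\,d\upsilon_g(\eta)\cdot K_0=\frac{C_\theta\,\omega_p\,K_0^2}{b_n^{p}}?
\end{equation*}
This does not quite match: the factor $\omega_p$ must come from somewhere. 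The resolution is that in the pointwise bound one should \emph{not} collapse all the way to $\Exp[F_n(\eta,\xio)]$; instead bound $K(\cdot)^2\le K_0 K(\cdot)$ only partially and keep $\int_{B_M(\eta,b_n)}\theta_\eta(z)^{-1}b_n^{-p}K(d_g(\eta,z)/b_n)\,d\upsilon_g(z)\le K_0\,\omega_p$ via~\eqref{integ of B_M} and~(K4). This is where the $\omega_p$ enters.

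So the clean argument is: bound $\Exp[F_n^2(\eta,\xio)]\le \frac{C_\theta}{b_n^{2p}}\int_{B_M(\eta,b_n)}\theta_\eta(z)^{-1}K(d_g(\eta,z)/b_n)^2 f(z)\,d\upsilon_g(z)$, integrate over $\eta$, use Fubini, interchange variables so the inner integral is over $\eta\in B_M(z,b_n)$, bound $K^2\le K_0^2$ and $\theta_\eta(z)^{-1}$ — here one needs $\sup_{\eta\in B_M(z,r_0)}\theta_\eta(z)^{-1}$; note the constant $C_\theta$ in the theorem is $\sup_z\sup_{\eta\in B_M(z,r_0)}\theta_z(\eta)^{-1}$, so one must invoke the symmetry of the volume density near the diagonal or simply absorb it into $C_\theta$ by the standing assumption in~(b1) — and then apply~\eqref{integ of B_M} in the form $\int_{B_M(z,b_n)}\theta_z(\eta)^{-1}d\upsilon_g(\eta)=b_n^p\omega_p$, leaving $\int_M f(z)\,d\upsilon_g(z)=1$. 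This yields exactly $\frac{C_\theta\,\omega_p K_0^2}{b_n^{p}}$.

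The main obstacle is the bookkeeping around $\theta$: the volume density $\theta_\eta(z)$ is naturally a function of the base point $\eta$ and the target $z$, and the definition of $C_\theta$ fixes a particular order of the arguments. One must either establish (or cite) that $\theta_\eta(z)$ and $\theta_z(\eta)$ agree to leading order on the diagonal — which holds because both equal $1$ at $z=\eta$ and their discrepancy is controlled by curvature on the scale $b_n\to 0$ — or, more robustly, enlarge the supremum in the definition of $C_\theta$ so that it dominates both orderings; the assumption in~(b1) that $\inf_{\eta\in B_M(z,r_0)}\theta_z(\eta)>0$ for every $z$ is precisely what guarantees finiteness of the relevant supremum. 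Once that point is pinned down, the rest is the routine change of variables of Lemma~\ref{lem: IntK is 1} together with~(K4) and the normalization~\eqref{integ of B_M}.
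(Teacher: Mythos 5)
Your final ``clean argument'' is correct and is essentially the paper's proof: apply Fubini, bound $K^2\le K_0^2$ and one factor $\theta^{-1}\le C_\theta$, and evaluate the remaining integral exactly via~\eqref{integ of B_M}, leaving $\int_M f\,d\upsilon_g=1$. The bookkeeping point you rightly flag --- that after swapping the order of integration one needs $\theta_\eta(z)^{-1}$ with the arguments in the ``wrong'' order --- is handled silently in the paper (which simply writes $\theta_z(\eta)$ in the inner integral $I(z)$), and is justified by the classical symmetry $\theta_\eta(z)=\theta_z(\eta)$ of Besse's volume density function; note also that the first route you abandoned would in fact suffice, since $\omega_p K_0\ge 1$ by (K2) and (K4), so its bound $C_\theta K_0 b_n^{-p}$ is even sharper than the one claimed.
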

\begin{proof}
Applying Fubini's theorem we write 
\begin{equation}\label{eqn lemma Var I}
\int_{M}\Exp[F_n^2(\eta, \xio)]\,d\upsilon_g(\eta)=\int_{M}I(z)f(z)\,d\upsilon_g(z),
\end{equation}
where
\[
I(z)=\int_{B_M(z,b_n)}\frac{1}{b_n^{2p}\theta^2_z(\eta)}K^2\left(\frac{d_g(\eta
,z)}{b_n}\right)d\upsilon_g(\eta).
\]
Let us define $C_\theta(z):=\sup\limits_{\eta\in B_M(z,r_0)}\theta_z(\eta)^{-1}$, which is finite because of (b1). By assumption (K4) and~\eqref{integ of B_M},
\begin{equation*}
I(z)\leq \frac{C_\theta(z)K^2_0}{b_n^{p}}\int_{B_M(z,b_n)}\frac{1}{b_n^{p}\theta_z(\eta)}d\upsilon_g(\eta)
=\frac{C_\theta(z)\omega_pK^2_0}{b_n^{p}}.
\end{equation*}
Plugging this estimate into~\eqref{eqn lemma Var I} finishes the proof.%we finally get
\end{proof}

We proceed to prove Theorem~\ref{thm L2 conv norm}.

\begin{proof}[Proof of Theorem~\ref{thm L2 conv norm}]
By Fubini's theorem, 
\[
\Exp[\Vert\hat{f}_n- f\Vert^2_2]=\int_M \Exp[ |\hat{f}_n(\eta)-f(\eta)|^2]d\upsilon_g(\eta)=: \int_{M} J(\eta)  d\upsilon_g(\eta).
\]
%corrected: \hat{f}_{B_n'} --> \hat{f}_n
Note that $J(\eta)=\Var(\edit{\hat{f}_n}(\eta))+\big(\operatorname{Bias}\edit{\hat{f}_n}(\eta)\big)^2$. In view of~\eqref{eq Efn vs EFn} and the Campbell theorem we get
\begin{align}
&\Var(\fn(\eta))= \Exp[\fn^2(\eta)]-(\Exp[\fn(\eta)])^2\nonumber\\
&=\frac{1}{\lambda^2|B_n'|^2}\Exp\Big[\sum_{i\geq 1}\mathds{1}_{\{Y_i\in B_n'\}}F^2_n(\eta,\xi_i)\Big]+\frac{1}{\lambda^2|B_n'|^2}\Exp\Big[\sideset{}{^{\neq}}\sum_{i,j\geq 1}\mathds{1}_{\{Y_i,Y_j\in B'_n\}}F_n(\eta,\xi_i)F_n(\eta,\xi_j)\Big]\nonumber\\
&-\Exp[F_n(\eta,\xio)]^2=\frac{1}{\lambda|B_n'|}\Exp[F^2_n(\eta,\xio)]+\frac{\alpha^{(2)}(B_n'\times B_n')}{\lambda^2|B'_n|^2}\Exp[F_n(\eta,\xio)]^2-\Exp[F_n(\eta,\xio)]^2\nonumber\\
&=\frac{1}{\lambda|B_n'|}\Exp[F^2_n(\eta,\xio)].\label{eq bound var f_n}
\end{align}
Here, $\alpha^{(2)}(\cdot)$ denotes the 2nd-order factorial moment measure of the Poisson point process $\Pi:=\{Y_i\}_{i\geq 1}$. We refer to~\cite[Chapter 1]{SKM87} for further definitions and formulas related to this measure in the Poisson case. Corollary~\ref{lemma Bias f} and Lemma~\ref{lemma Var} yield the existence of constants $C_\theta, C_2>0$ such that
\begin{equation*}
\Exp[\Vert\hat{f}_n- f\Vert^2_2]\leq \frac{C_\theta\omega_p K^2_0}{\lambda \abs{B'_n}b_n^{p}}+b_n^4 C_2^2 K_2^2\upsilon_g(M).
\end{equation*}
\end{proof} 
Analogous arguments show the $L^2$-convergence of $\fn(\xio)$ to $f(\xio)$.
\begin{proof}[Proof of Corollary~\ref{thm L2 conv}]
Passing to normal coordinates as in~\eqref{eq EF-f change var} and~\eqref{eq3 Ef-f} and setting $\tilde{f}:=f\circ\exp_\eta$ \edit{lead to}%, we obtain
\begin{equation}\label{eq EF_n approx f}
\Exp[F_n(\eta,\xio)]=\int_{B(0,b_n)}\frac{1}{b_n^p}K\left(\frac{\norm{x}}{b_n}\right)\tilde{f}(x)dx=(1+o(1))f(\eta).
\end{equation}
From the proof of Lemma~\ref{lemma Var} we thus obtain 
\begin{equation}\label{eq bound F_n2}
\Exp[F_n^2(\eta,\xio)]\leq\frac{K_0C_\theta(\eta)}{b_n^p}\Exp[F_n(\eta,\xio)]\leq\frac{2K_0C_\theta(\eta)}{b_n^p}f(\eta)
\end{equation}
for any $\eta\in\supp f$. In view of~\eqref{eq bound var f_n} and Lemma~\ref{lemma Bias f}, this yields
\begin{equation*}
\Exp[|\fn(\xio)-f(\xio)|^2]\leq\frac{2C_\theta K_0\norm{f}_2^2}{\lambda b^p_n|B_n'|}+b_n^4C_2^2K_2^2,
\end{equation*}
which tends to zero as $n\to\infty$.
\end{proof}
\begin{rem}\label{rem optimal b_n density}
The problem of finding an optimal sequence of bandwidths $\{b_n\}_{n\in\Nat}$ can be understood as a special case of regularization~\cite{Sch07} and %we can use 
the bound of the estimation error given in Theorem~\ref{thm L2 conv norm} \edit{can be used} in order to find it. %We start by considering the bandwidth $b>0$ being independent of $n$. 
For any fixed $n\in\Nat$, the optimal bandwidth will be 
%space saving
$\operatorname{argmin}_{\edit{b_n}} \Exp[\Vert\hat{f}_n- f\Vert^2_2]$. \edit{Applying} Theorem~\ref{thm L2 conv norm}, we can approximate the order of magnitude of this optimal $b_n$ by minimizing the upper bound of the mean square error $e(b_n):=\frac{C_\theta\omega_p K^2_0}{\lambda \abs{B'_n}b_n^{p}}+b_n^4 C_2^2 K_2^2\upsilon_g(M)$. A simple calculation leads to the unique minimum point $b_{n,opt}=\left(\frac{pC_\theta\omega_pK_0^2}{4C_2^2K_2^2\upsilon_g(M)\lambda\abs{B'_n}}\right)^{\frac{1}{p+4}}$.
Note that %this value depends on $n$, so that we have 
$b_{n,opt}\downarrow 0$ and $b^p_{n,opt}\abs{B'_n}\to\infty$ as $n\to\infty$.
\end{rem}
We finish this section by proving that if the observation window $B_n'$ is large enough, then the previous bounds provide the almost surely consistency of $\hat{f}_n$.
\begin{thm}\label{thm as conv KDE}
Under the assumptions of Theorem~\ref{thm L2 conv norm}, choosing $b_n=o(n^{-\frac{1+\delta}{4}})$ and $B_n'$ such that
%$|B'_n|>n^{\frac{(4+p)(1+\delta)}{4p}}$
%new less "artificial" condition -which seems to work only for $p=1$!
\edit{ $b_n^p|B'_n|>n^{1+\delta}$} for some $\delta>0$,
\begin{equation*}
|\hat{f}_n(\eta)-f(\eta)|\xrightarrow{n\to\infty}0\qquad a.s.
\end{equation*}
for any $\eta\in M$ such that $f(\eta)<\infty$.
\end{thm}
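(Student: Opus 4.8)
The plan is to deduce almost sure convergence from the $L^2$ bound already established, using the Borel--Cantelli lemma together with Chebyshev's inequality. First I would fix $\eta\in M$ with $f(\eta)<\infty$ and recall from the proof of Corollary~\ref{thm L2 conv} (specifically~\eqref{eq bound var f_n} and~\eqref{eq bound F_n2}) that
\begin{equation*}
\Exp[|\hat{f}_n(\eta)-f(\eta)|^2]\leq\frac{2C_\theta(\eta) K_0 f(\eta)}{\lambda b^p_n|B_n'|}+b_n^4C_2^2K_2^2.
\end{equation*}
Under the hypotheses $b_n^p|B'_n|>n^{1+\delta}$ and $b_n=o(n^{-(1+\delta)/4})$, the first term is $O(n^{-(1+\delta)})$ and the second term is $o(n^{-(1+\delta)})$, so $\Exp[|\hat f_n(\eta)-f(\eta)|^2]\leq C n^{-(1+\delta)}$ for some constant $C$ and all large $n$.

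Next I would apply Chebyshev's (Markov's) inequality: for any $\varepsilon>0$,
\begin{equation*}
\Pro\big(|\hat{f}_n(\eta)-f(\eta)|>\varepsilon\big)\leq\frac{\Exp[|\hat{f}_n(\eta)-f(\eta)|^2]}{\varepsilon^2}\leq\frac{C}{\varepsilon^2 n^{1+\delta}}.
\end{equation*}
Since $\delta>0$, the series $\sum_{n\geq 1}n^{-(1+\delta)}$ converges, hence $\sum_{n\geq 1}\Pro(|\hat f_n(\eta)-f(\eta)|>\varepsilon)<\infty$. By the first Borel--Cantelli lemma, $\Pro(|\hat f_n(\eta)-f(\eta)|>\varepsilon\text{ i.o.})=0$; taking a countable sequence $\varepsilon=1/k\downarrow 0$ and intersecting the corresponding probability-one events gives $\hat f_n(\eta)\to f(\eta)$ almost surely.

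The only mild subtlety — and the place where one must be a little careful rather than where any real difficulty lies — is that the constant $C_\theta(\eta)=\sup_{\nu\in B_M(\eta,r_0)}\theta_\eta(\nu)^{-1}$ and the value $f(\eta)$ appearing in the variance bound are finite for the fixed $\eta$ under consideration (the former by assumption (b1), the latter by the hypothesis $f(\eta)<\infty$), so the constant $C$ genuinely does not depend on $n$; the bandwidth must also satisfy (b1)--(b3), which is consistent with the stated growth conditions since $b_n=o(n^{-(1+\delta)/4})\to 0$ and $b_n^p|B_n'|>n^{1+\delta}\to\infty$. There is no uniformity in $\eta$ being claimed, so no covering or chaining argument is needed, and the proof is essentially a bookkeeping check that the two error terms decay faster than $1/n$. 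I would present it in two or three lines invoking Corollary~\ref{thm L2 conv}, Markov's inequality and Borel--Cantelli.
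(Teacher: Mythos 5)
Your proof is correct and follows essentially the same route as the paper: the pointwise $L^2$ bound from the proof of Corollary~\ref{thm L2 conv}, Chebyshev's inequality, the summability of $n^{-(1+\delta)}$ under the stated choices of $b_n$ and $B_n'$, and the Borel--Cantelli lemma. Your additional remarks (taking $\varepsilon=1/k$ and intersecting the almost sure events, and checking finiteness of $C_\theta(\eta)$ and $f(\eta)$) are correct refinements of details the paper leaves implicit.
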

%new proof
%\edit{
%\begin{proof}
%In view of Lemma~\ref{lemma Bias f}, for each $\varepsilon>0$ we find $n_0>0$ and $\tilde{\varepsilon}>0$ such that $\varepsilon-\operatorname{Bias}(\hat{f}_n(\eta))<\tilde{\varepsilon}$. Then, Chebyshev's inequality,~\eqref{eq bound var f_n} and~\eqref{eq bound F_n2} yield
%\begin{align*}
%\Pro(|\hat{f}_n(\eta)-\Exp[\hat{f}_n(\eta)]|>\tilde{\varepsilon})&\leq\frac{\Var(\hat{f}_n(\eta))}{\tilde{\varepsilon}^2}\leq\frac{2K_0C_\theta(\eta)f(\eta)}{\tilde{\varepsilon}^2\lambda b_n^p|B_n'|}
%\end{align*}
%and hence
%\[
%\sum_{n=1}^\infty\Pro(|\hat{f}_n(\eta)-f(\eta)|>\varepsilon)\leq n_0+\frac{2K_0C_\theta(\eta)f(\eta)}{\tilde{\varepsilon}^2}\sum_{n=n_0}^\infty\frac{1}{n^{1+\delta}}<\infty
%\]
%Applying Borel-Cantelli's lemma the result follows.
%\end{proof}
%}
%%old proof -without first two lines--
\begin{proof}
%Notice that the assumption on $|B_n'|$ can be rewritten as $\Big(\frac{n^{1+\delta}}{|B_n'|}\Big)^{1/p}<{n^{-\frac{1+\delta}{4}}}$ and $b_n$ can thus be chosen to satisfy $\Big(\frac{n^{1+\delta}}{|B_n'|}\Big)^{1/p}<b_n< {n^{-\frac{1+\delta}{4}}}$ and assumptions (b1) - (b3).
%
For each $\varepsilon>0$, Chebyshev's inequality and the bounds used in the proof of Corollary~\ref{thm L2 conv} yield
\begin{align*}
\Pro(|\hat{f}_n(\eta)-f(\eta)|>\varepsilon)&\leq\frac{\Exp[|\hat{f}_n(\eta)-f(\eta)|^2]}{\varepsilon^2}\leq\frac{2C_\theta K_0f(\eta)}{\varepsilon^2\lambda b_n^p|B_n'|}+\frac{b^4_nC^2_2K^2_2}{\varepsilon^2}.
\end{align*}
Due to the choice of $b_n$ we have $b_n^p|B_n'|>n^{1+\delta}$ and $b_n^4<n^{-(1+\delta)}$, hence
\begin{align*}
\sum_{n=1}^\infty\Pro(|\hat{f}_n(\eta)-f(\eta)|>\varepsilon)\leq c_1f(\eta)\sum_{n=1}^\infty\frac{1}{n^{1+\delta}}<\infty
\end{align*}
for some $c_1<\infty$. The almost sure convergence follows from Borel-Cantelli's lemma.
\end{proof}
%-------------------------------------------------------------------------------------------------------------
\section{Entropy estimator}\label{sectEntrEst}
As already mentioned in the introduction, we measure the diversity of the distribution of interest by analyzing its Kolmogorov entropy defined as
\begin{equation*}\label{eqn def entropy}
\e_f:=-\int_{M} f(\eta) \log f(\eta)\,d\upsilon_g(\eta),
\end{equation*}
where $f$ is the density of the distribution. This section is devoted to the construction of a consistent estimator for $\e_f$.
\subsection{Definition of the estimator and consistency}
For each $n\in\Nat$ we define
\begin{equation}\label{eq def EE}
\widehat{\e}_f(B_n):=-\frac{1}{\lambda|B_n|}\sum\limits_{i\geq 1}\mathds{1}_{\{Y_i\in B_n\}}\log \fnYi(\xi_i),
\end{equation}
where $B_n'+y$ denotes the translation of $B_n'$ by $y\in\R^d$ \edit{and $B'_n\subseteq B_n$}. %Moreover, we assume that $B_n'\subseteq B_n$, whereas their size relation will be determined later.
%comment that for this part the size relation is not relevant at all, i.e. everything works for B_n=B_n' (or maybe introduce B_n for the CLT?)
\edit{The window $B'_n$ is introduced for the purpose of notation and it will become relevant when proving the CLT in Section 5. Throughout this section we have no restrictions on it and we can assume $B_n=B_n'$.}

From now on, we substitute the previous assumption (f1) by $f$ being continuous. Note that since $M$ is compact, the new (f1) in particular implies the former. With the additional assumptions for a typical mark $\xi_{\mathbf{0}}$,
\[
\text{(L1)}~\Exp\left[\log^2 f(\xi_\mathbf{0})\right]=:L_1<\infty~\qquad\text{and}\qquad
\text{(L2)}~\Exp\left[\left(\frac{\norm{\nabla f(\xi_{\mathbf{0}})}}{f(\xi_{\mathbf{0}})}\right)^2\right]=:L_2<\infty,
\]
we can prove $L^2$-consistency of the estimator.

\begin{thm}\label{thm L2 conv EE}
%to make clear that the windows can be the same
%Let $B_n':=(0,m_n)^d$, $B_n:=(-p_n,p_n)^d$, $n\in\Nat$, for some $p_n>m_n> 0$, and let $B'_n$ satisfy $(b1)-(b3)$. Further, assume that conditions $(K1)-(K5)$, $(f1)$, $(f2)$, $(L1)$ and $(L2)$ hold.
\edit{
For each $n\in\Nat$, let $\{B_n\}_{n\in\Nat}$ and $\{B'_n\}_{n\in\Nat}$ be sequences of VH-growing Borel sets satisfying $(b1)-(b3)$.} Further, assume that conditions $(K1)-(K5)$, $(f1)$, $(f2)$, $(L1)$ and $(L2)$ hold. Then,
\begin{equation*}\label{eqn L2 conv EE}
\Exp[\vert\widehat{\e}_f(B_n)-\e_f\vert^2]\leq 3\left(\frac{8K_0C_\theta\upsilon_g(M)}{\lambda^2|B_n||B_n'|b_n^p}+\frac{4}{\lambda^2|B_n'|}+32b_n^2L_2+\frac{L_1}{\lambda |B_n|}\right)
\end{equation*}
for sufficiently large $n\in\Nat$.
\end{thm}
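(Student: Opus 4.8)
The natural strategy is to decompose the error $\widehat{\e}_f(B_n)-\e_f$ into three pieces, corresponding to the three summands in the bound (up to the overall factor $3$, which signals an application of $(a+b+c)^2\le 3(a^2+b^2+c^2)$). Write
\[
\widehat{\e}_f(B_n)-\e_f = \underbrace{\Big(\widehat{\e}_f(B_n)+\tfrac{1}{\lambda|B_n|}\sum_{i\ge1}\mathds{1}_{\{Y_i\in B_n\}}\log f(\xi_i)\Big)}_{=:A_n} + \underbrace{\Big(-\tfrac{1}{\lambda|B_n|}\sum_{i\ge1}\mathds{1}_{\{Y_i\in B_n\}}\log f(\xi_i) - \Exp[-\log f(\xi_\mathbf{0})]\Big)}_{=:B_n} + \underbrace{\big(\Exp[-\log f(\xi_\mathbf{0})]-\e_f\big)}_{=0}.
\]
The last term vanishes because $\Exp[-\log f(\xi_\mathbf{0})]=-\int_M f\log f\,d\upsilon_g=\e_f$ by the Campbell theorem and the definition of the mark density. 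The term $B_n$ is a centered Poisson random sum of i.i.d. bounded-$L^2$ terms, so by Campbell's formula (second-order) $\Exp[B_n^2]=\frac{1}{\lambda|B_n|}\Exp[\log^2 f(\xi_\mathbf{0})]=\frac{L_1}{\lambda|B_n|}$, matching the fourth summand. The real work is bounding $\Exp[A_n^2]$, where
\[
A_n=-\frac{1}{\lambda|B_n|}\sum_{i\ge1}\mathds{1}_{\{Y_i\in B_n\}}\big(\log\fnYi(\xi_i)-\log f(\xi_i)\big).
\]

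For $A_n$ I would first control $\log\fnYi(\xi_i)-\log f(\xi_i)$ pointwise. The elementary inequality $|\log a-\log b|\le |a-b|/\min(a,b)$ is dangerous since $\fnYi$ can be small; the cleaner route is to use $|\log a - \log b| \le |a-b|/b$ when $a\ge b$ and a truncation/two-sided argument otherwise, but in an $L^2$ bound it is more robust to split $\log\fnYi(\xi_i)-\log f(\xi_i)$ via a first-order Taylor expansion of $\log$ around $f(\xi_i)$ with an explicit second-order remainder, or simply to write
\[
\log\fnYi(\xi_i)-\log f(\xi_i)=\log\!\Big(1+\frac{\fnYi(\xi_i)-f(\xi_i)}{f(\xi_i)}\Big)
\]
and use $|\log(1+t)|\le 2|t|$ for $|t|\le 1/2$ together with a separate estimate on the event $\{|\fnYi(\xi_i)-f(\xi_i)|>f(\xi_i)/2\}$. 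In either case the leading contribution is governed by $(\fnYi(\xi_i)-f(\xi_i))/f(\xi_i)$, whose conditional second moment (given the marks) we already know how to bound: conditionally on $\xi_i$, the translated estimator $\fnYi$ evaluated at $\xi_i$ is, by stationarity of $\Psi$ and independence of the marks from $\Pi$, distributed like $\fn$ evaluated at a typical mark, so the bias and variance bounds from Lemma~\ref{lemma Bias f}, \eqref{eq bound var f_n} and \eqref{eq bound F_n2} apply. This produces a variance-type term of order $\frac{K_0 C_\theta}{\lambda b_n^p|B_n'|}\Exp[1/f(\xi_\mathbf{0})]\cdot\dots$ — but note that the stated bound has $\upsilon_g(M)$ and no $\Exp[1/f]$, which suggests that the division by $f(\xi_i)$ is absorbed earlier: indeed $F_n(\eta,\xi)/f(\eta)$ integrates more favourably, or one uses that $\Exp[F_n^2(\eta,\xi_\mathbf{0})]/f(\eta)^2$ integrated against $f$ gives $\int \Exp[F_n^2]/f\le \frac{K_0C_\theta}{b_n^p}\int \Exp[F_n]/f\,$, and the leading $\Exp[F_n]\approx f$ cancels one power. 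The bias contribution, via Lemma~\ref{lemma Bias f} scaled by $1/f$, yields the $b_n^2$-term; here assumption $(L2)$ on $\Exp[(\norm{\nabla f}/f)^2]$ enters, because the first-order Taylor term of $f$ that was killed by $(K5)$ in Lemma~\ref{lemma Bias f} does \emph{not} vanish after dividing by $f(\xi_i)$ pointwise and must be controlled in $L^2$ by $\norm{\nabla f(\xi_i)}/f(\xi_i)$, contributing $b_n^2 L_2$ up to constants.

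So the key steps, in order, are: (i) reduce to $\Exp[A_n^2]$ by the triple split and dispatch $B_n$ and the vanishing term; (ii) for fixed realization of the marks, condition on $\{\xi_i\}$ and use stationarity to identify the law of $\fnYi(\xi_i)$, reducing to the single-window estimates of Section~3; (iii) linearize $\log\fnYi-\log f$, keeping a remainder term, and bound the linear part's second moment using \eqref{eq bound var f_n}--\eqref{eq bound F_n2} and the bias lemma, invoking $(L1)$ for integrability of $\log^2 f$ and $(L2)$ for the gradient term; (iv) handle the small-probability ``bad event'' where $\fnYi(\xi_i)$ is far from $f(\xi_i)$ — this is where ``for sufficiently large $n$'' is needed, since $b_n\downarrow0$ and $b_n^p|B_n'|\to\infty$ make that event rare, and its contribution is of smaller order and folds into the constants; (v) collect terms and apply $(\sum_{k=1}^3 t_k)^2\le 3\sum t_k^2$ to get the factor $3$. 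The main obstacle is step (iv) together with making step (iii) rigorous without an $\Exp[1/f(\xi_\mathbf{0})]$ hypothesis: one must exploit the cancellation between $\Exp[F_n]\approx f$ and the division by $f$ at every stage rather than bounding $1/f$ crudely, and carefully track that the cross term between the random number of summands and the summands (which are \emph{not} independent, since $\fnYi$ depends on all of $\Pi$) is handled correctly — here the Campbell/Mecke formula for the Poisson MPP, rather than naive independence, is what makes the computation go through.
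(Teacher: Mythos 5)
Your overall strategy is the paper's: an Ahmad--Lin plug-in decomposition, the inequality $(a+b+c)^2\le 3(a^2+b^2+c^2)$, a mean-value/linearization bound on the logarithm, the Campbell--Mecke formula for the Poisson MPP, and the bias estimate driven by $(L2)$. The difference is in where you place the pivots: the paper interposes $L_n:=-\frac{1}{\lambda|B_n|}\sum_i\mathds{1}_{\{Y_i\in B_n\}}\log\Exp[\fnYi(\xi_i)]$ between $\widehat{\e}_f(B_n)$ and $M_n:=-\frac{1}{\lambda|B_n|}\sum_i\mathds{1}_{\{Y_i\in B_n\}}\log f(\xi_i)$, whereas your $A_n$ merges the paper's $I_{1,n}$ and $I_{2,n}$ and your third term is identically zero. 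Since $\log\Exp[\fnYi(\xi_i)]=\log\Exp[F_n(\xi_i,\xio)]$ is a deterministic function of the mark, $L_n$ is exactly the device that separates the stochastic fluctuation from the bias; the latter then reduces to Lemma~\ref{lemma Ef_n2/f_n integ} and yields the $32b_n^2L_2$ term. You would have to perform this re-split inside $A_n$ anyway, so this is organizational rather than substantive.

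The genuine gap is in your treatment of $\Exp[A_n^2]$. Expanding the square of the random sum produces, besides the diagonal terms you propose to bound via \eqref{eq bound var f_n} and \eqref{eq bound F_n2}, the off-diagonal sum over $i\neq j$. These cross terms do not vanish and cannot be folded into the constants: $\fnYi(\xi_i)$ and $\fnYj(\xi_j)$ are built from the same Poisson points, and after applying the second-order Campbell--Mecke (reduced Palm) formula one must estimate $\Cov\big(\fnyI(\eta_1),\fnyII(\eta_2)\big)\le\frac{1}{\lambda|B_n'|}\Exp[F_n(\eta_1,\xio)F_n(\eta_2,\xio)]$; integrating this against $f(\eta_1)f(\eta_2)$ over $(B_n\times M)^2$ is precisely what produces the second summand $\frac{4}{\lambda^2|B_n'|}$ in the stated bound, which your plan never generates. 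Your step (iv) is also left open: the paper disposes of the event $\{\fnYi(\xi_i)<f(\xi_i)/2\}$ by invoking the a.s.\ consistency of Theorem~\ref{thm as conv KDE} to justify \eqref{eq min fny f} for $n$ large (admittedly a delicate point), while your truncation route, though arguably more robust, still requires an actual estimate of the bad-event contribution to close the argument.
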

\begin{cor}\label{cor L2 conv EE}
Under the above assumptions, it follows directly from Theorem~\ref{thm L2 conv EE} that $\widehat{\e}_f(B_n)$ is an $L^2$-consistent estimator of $\e_f$, i.e. $\Exp[\vert\widehat{\e}_f(B_n)-\e_f\vert^2]\xrightarrow{n\to\infty}0$.
\end{cor}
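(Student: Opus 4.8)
The plan is to invoke the explicit error bound established in Theorem~\ref{thm L2 conv EE} and verify that each of its four summands vanishes as $n\to\infty$ under the standing hypotheses. Since that theorem guarantees
\begin{equation*}
\Exp[\vert\widehat{\e}_f(B_n)-\e_f\vert^2]\leq 3\left(\frac{8K_0C_\theta\upsilon_g(M)}{\lambda^2|B_n||B_n'|b_n^p}+\frac{4}{\lambda^2|B_n'|}+32b_n^2L_2+\frac{L_1}{\lambda |B_n|}\right)
\end{equation*}
for all sufficiently large $n\in\Nat$, it suffices to pass to the limit term by term, noting that $K_0$, $C_\theta$, $\upsilon_g(M)$, $\lambda$, $L_1$ and $L_2$ are fixed finite constants.

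I would argue as follows. For the first term, the VH-growing property of $\{B_n\}_{n\in\Nat}$ gives $|B_n|\to\infty$, while condition (b3) applied to $\{B'_n\}_{n\in\Nat}$ gives $b_n^p|B_n'|\to\infty$; hence the denominator $|B_n|\cdot\big(b_n^p|B_n'|\big)$ diverges and this term tends to $0$. For the second term, $|B_n'|\to\infty$ by VH-growth of $\{B'_n\}_{n\in\Nat}$, so $4/(\lambda^2|B_n'|)\to 0$. For the third term, (b2) forces $b_n\downarrow 0$ and $L_2<\infty$ by (L2), so $32b_n^2L_2\to 0$. For the fourth term, again $|B_n|\to\infty$ and $L_1<\infty$ by (L1), so $L_1/(\lambda|B_n|)\to 0$. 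Summing the four limits, the right-hand side converges to $0$, which forces $\Exp[\vert\widehat{\e}_f(B_n)-\e_f\vert^2]\to 0$, i.e. $\widehat{\e}_f(B_n)$ is $L^2$-consistent.

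There is essentially no obstacle here: the corollary is a direct bookkeeping consequence of the quantitative bound of Theorem~\ref{thm L2 conv EE}, and the only mild subtlety, namely that the bound is asserted only for $n$ sufficiently large, is harmless when taking $n\to\infty$.
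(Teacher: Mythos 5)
Your proposal is correct and is precisely the "direct" verification the paper intends: the corollary is stated without a written proof because it amounts to exactly the term-by-term limit check you perform, using VH-growth of $B_n$ and $B'_n$, condition (b3), $b_n\downarrow 0$, and the finiteness of $L_1$ and $L_2$. Nothing is missing.
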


%-------------------------------------------------------------------------------------------------------------------------------------------------------------------------------------------------------
\subsection{Proof of Theorem~\ref{thm L2 conv EE}}
We start by proving the following lemma assuming that all conditions of Theorem~\ref{thm L2 conv EE} are satisfied.
\begin{lemma}\label{lemma Ef_n2/f_n integ}
For sufficiently large $n\in\Nat$ it holds that
\begin{equation*}
\int_{\supp f}\frac{(\Exp[\fBn(\eta)]-f(\eta))^2}{f(\eta)}\,d\upsilon_g(\eta)\leq 4b_n^2L_2.
\end{equation*}
\end{lemma}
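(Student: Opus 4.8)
The plan is to bound the integrand $(\Exp[\fBn(\eta)]-f(\eta))^2/f(\eta)$ pointwise on $\supp f$ by a multiple of $b_n^2$ times a quantity whose integral against nothing (or against $f$, after rewriting) is controlled by $L_2$, and then integrate. First I would recall from~\eqref{eq Efn vs EFn} that $\Exp[\fBn(\eta)]=\Exp[F_n(\eta,\xio)]$, and from Lemma~\ref{lem: IntK is 1} and (K2) that $\Exp[F_n(\eta,\xio)]-f(\eta)=\int_{B_M(\eta,b_n)}\frac{1}{b_n^p\theta_\eta(z)}K\big(\frac{d_g(\eta,z)}{b_n}\big)(f(z)-f(\eta))\,d\upsilon_g(z)$. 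Passing to the exponential chart at $\eta$ as in the proof of Lemma~\ref{lemma Bias f}, with $\tilde f=f\circ\exp_\eta$, this equals $\int_{B(0,b_n)}\frac{1}{b_n^p}K\big(\frac{\norm x}{b_n}\big)\big(\nabla\tilde f(0)\cdot x+R_2(0,x)\big)dx$, and the $\nabla\tilde f(0)\cdot x$ term vanishes by (K5), exactly as in Lemma~\ref{lemma Bias f}.

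The key difference from Lemma~\ref{lemma Bias f} is that I do not want to throw away the gradient information entirely, because dividing by $f(\eta)$ and integrating requires something like $L_2=\Exp[(\norm{\nabla f(\xio)}/f(\xio))^2]$. So instead I would keep the first-order term: write $f(z)-f(\eta)=\nabla\tilde f(0)\cdot x + R_2(0,x)$ with $\abs{R_2(0,x)}\le C_2\norm x^2$, so that after the chart change and using (K5) to kill the linear piece, $\abs{\Exp[F_n(\eta,\xio)]-f(\eta)}\le C_2 b_n^2 K_2$ — but this alone gives $b_n^4 C_2^2 K_2^2\int_{\supp f} f(\eta)^{-1}d\upsilon_g$, which need not be finite. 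The correct route is to not discard the linear term before squaring: one keeps $\Exp[F_n(\eta,\xio)]-f(\eta)$ as an integral and estimates $\abs{f(z)-f(\eta)}$ on $B_M(\eta,b_n)$ by the mean value bound $\abs{f(z)-f(\eta)}\le b_n\,\sup_{B_M(\eta,b_n)}\norm{\nabla f}$; for $n$ large enough that $b_n$ is small, continuity of $\nabla f$ and the compactness of $M$ let one replace this by $b_n(\norm{\nabla f(\eta)}+o(1))$, uniformly in $\eta$. Hence $\abs{\Exp[\fBn(\eta)]-f(\eta)}\le b_n(\norm{\nabla f(\eta)}+o(1))$ for large $n$, so $(\Exp[\fBn(\eta)]-f(\eta))^2/f(\eta)\le b_n^2(\norm{\nabla f(\eta)}+o(1))^2/f(\eta)$, and integrating over $\supp f$ and absorbing the $o(1)$ gives a bound $\le (1+o(1))b_n^2\int_{\supp f}\frac{\norm{\nabla f(\eta)}^2}{f(\eta)}d\upsilon_g(\eta)=(1+o(1))b_n^2 L_2$, which for $n$ large is $\le 4b_n^2 L_2$.

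Carrying this out, the steps in order are: (i) rewrite $\Exp[\fBn(\eta)]$ via Campbell's theorem as $\Exp[F_n(\eta,\xio)]$ and subtract $f(\eta)$ using Lemma~\ref{lem: IntK is 1}; (ii) estimate the increment $\abs{f(z)-f(\eta)}$ for $z\in B_M(\eta,b_n)$ by $b_n\sup_{B_M(\eta,b_n)}\norm{\nabla f}$ using that, for $n$ large, $b_n<\operatorname{inj}_gM$ so a minimizing geodesic exists; (iii) use (f2) (continuity of $\nabla f$) and compactness of $M$ to get $\sup_{B_M(\eta,b_n)}\norm{\nabla f}\le\norm{\nabla f(\eta)}+\varepsilon_n$ with $\varepsilon_n\to0$ uniformly in $\eta$; (iv) combine with $\int_{B_M(\eta,b_n)}\frac{1}{b_n^p\theta_\eta(z)}K(\cdot)d\upsilon_g(z)=1$ to bound $\abs{\Exp[\fBn(\eta)]-f(\eta)}\le b_n(\norm{\nabla f(\eta)}+\varepsilon_n)$; (v) divide by $f(\eta)$, square, integrate over $\supp f$, expand $(\norm{\nabla f(\eta)}+\varepsilon_n)^2\le (1+\varepsilon_n')\norm{\nabla f(\eta)}^2+C\varepsilon_n'$ type inequality, and use (L2) to conclude $\le 4b_n^2 L_2$ for $n$ large. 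The main obstacle is step (iii)–(v): getting the $o(1)$ correction to be genuinely uniform in $\eta$ and to be absorbed cleanly into the constant $4$ (rather than $1$), and making sure that the $\int_{\supp f}\frac{\norm{\nabla f}^2}{f}d\upsilon_g$ that appears really is $L_2$ — which holds because $\upsilon_g$ restricted to $\supp f$, weighted by $f$, is exactly the law of $\xio$, so $\int_{\supp f}\frac{\norm{\nabla f(\eta)}^2}{f(\eta)}d\upsilon_g(\eta)=\int_{\supp f}\big(\frac{\norm{\nabla f(\eta)}}{f(\eta)}\big)^2 f(\eta)\,d\upsilon_g(\eta)=\Exp\big[(\norm{\nabla f(\xio)}/f(\xio))^2\big]=L_2$.
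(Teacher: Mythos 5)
Your overall strategy is the same as the paper's: both start from the Campbell identity $\Exp[\fBn(\eta)]=\Exp[F_n(\eta,\xio)]$, pass to the exponential chart, and replace the crude second-order Taylor bound $C_2b_n^2K_2$ (which you correctly note would leave an integral $\int_{\supp f}f^{-1}d\upsilon_g$ that need not be finite) by a first-order bound proportional to $b_n\norm{\nabla f(\eta)}$, so that after dividing by $f$, squaring and integrating, assumption (L2) produces exactly a multiple of $b_n^2 L_2$. Where the two arguments diverge is in how that first-order bound is obtained, and this is where your version has a genuine gap.

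The paper writes $\tilde f(x)-\tilde f(0)=x\cdot\int_0^1\nabla\tilde f(tx)\,dt$, substitutes $x=b_ny$, and then bounds $\bigl\Vert\nabla\tilde f(tb_ny)\bigr\Vert=\bigl\Vert\nabla\tilde f(0)\bigr\Vert(1+o(1))$ \emph{multiplicatively}, absorbing the $o(1)$ into a factor $2$ for $n$ large; together with $\int_{B(0,1)}K(\norm y)\norm y\,dy\le 1$ this yields $\abs{\Exp[\fBn(\eta)]-f(\eta)}\le 2b_n\norm{\nabla f(\eta)}$, whose square over $f(\eta)$ integrates directly to $4b_n^2L_2$. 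You instead bound $\sup_{B_M(\eta,b_n)}\norm{\nabla f}\le\norm{\nabla f(\eta)}+\varepsilon_n$ with a uniform \emph{additive} error $\varepsilon_n\to 0$. After squaring and dividing by $f(\eta)$, the expansion $(\norm{\nabla f(\eta)}+\varepsilon_n)^2\le(1+\delta)\norm{\nabla f(\eta)}^2+(1+1/\delta)\varepsilon_n^2$ leaves the term $(1+1/\delta)\varepsilon_n^2/f(\eta)$, and integrating it over $\supp f$ requires precisely $\int_{\supp f}f^{-1}d\upsilon_g<\infty$ — the very integrability you flagged as unavailable at the outset, and which is not among the hypotheses (assumption (f3) is only introduced in Section 5). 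Since $\varepsilon_n$ is a constant in $\eta$, there is no way to absorb this term into $L_2$ without an a priori lower bound on $f$ or on $\norm{\nabla f}$. In other words, steps (iii)--(v) as you sketched them do not close; the argument needs the multiplicative form $\bigl\Vert\nabla\tilde f(tb_ny)\bigr\Vert\le(1+o(1))\norm{\nabla f(\eta)}$ rather than the additive $\norm{\nabla f(\eta)}+\varepsilon_n$, so that the whole estimate stays proportional to $\norm{\nabla f(\eta)}^2/f(\eta)$, whose integral is $L_2$ by (L2).
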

\begin{proof}
Recall from~\eqref{eq Efn vs EFn} that $\Exp[\fBn(\eta)]=\Exp[F_n(\eta,\xio)]$. Using normal coordinates analogously to~\eqref{eq EF-f change var} and~\eqref{eq3 Ef-f} with $\tilde{f}:=f\circ\exp_\eta$ we obtain
\begin{align*}
\abs{\Exp[F_n(\eta,\xio)]-f(\eta)}
&=\Big|\int_{B(0,b_n)}\frac{1}{b_n^{p}}K\left(\frac{\norm{x}}{b_n}\right)x\cdot\int_0^1\nabla\tilde{f}(tx)\,dt\,dx\Big|\\
&\leq b_n\int_{B(0,1)}K\left(\norm{y}\right)\norm{y}\int_0^1\big\Vert\nabla\tilde{f}(tb_ny)\big\Vert\,dt\,dy.
\end{align*}
Since $b_n\downarrow0$, we have $\big\Vert\nabla\tilde{f}(tb_ny)\big\Vert=\big\Vert\nabla\tilde{f}(0)\big\Vert(1+o(1))$ for sufficiently large $n\in\Nat$ and in view of (K2), last expression can be bounded by
\begin{equation*}
2b_n\big\Vert\nabla\tilde{f}(0)\big\Vert\int_{B(0,1)}K\left(\norm{y}\right)\norm{y}dy
\leq 2b_n\big\Vert\nabla\tilde{f}(0)\big\Vert.
\end{equation*}
Hence, $|\Exp[\fBn(\eta)]-f(\eta)|\leq 2 b_n\norm{\nabla f(\eta)}$ for sufficiently large $n\in\Nat$ and (L2) yields
\begin{equation*}
\int_{\supp f}\frac{(\Exp[\fBn(\eta)]-f(\eta))^2}{f(\eta)}\,d\upsilon_g(\eta)
\leq 4b_n^2\int_{\supp f}\frac{\norm{\nabla f(\eta)}^2}{f(\eta)}\,d\upsilon_g(\eta)= 4b_n^2L_2.
\end{equation*}
\end{proof}
We now proceed to prove Theorem~\ref{thm L2 conv EE}. Based on~\cite{AL76}, we introduce the quantities
\begin{align*}
&L_n:=-\frac{1}{\lambda|B_n|}\sum_{i\geq 1}\mathds{1}_{\{Y_i\in B_n\}}\log\Exp[\fnYi(\xi_i)],\\
&M_n:=-\frac{1}{\lambda|B_n|}\sum_{i\geq 1}\mathds{1}_{\{Y_i\in B_n\}}\log f(\xi_i).
\end{align*}
Applying inequality $(a+b+c)^2\leq 3(a^2+b^2+c^2)$, $a,b,c\in\R$, leads to
\begin{equation*}
\Exp\big[\vert\widehat{\e}_f(B_n)-\e_f\vert^2\big]\leq 3\big(\underbrace{\Exp\big[\vert\widehat{\e}_f(B_n)-L_n\vert^2\big]}_{=:I_{1,n}}+\underbrace{\Exp\big[\abs{L_n-M_n}^2\big]}_{=:I_{2,n}}+\underbrace{\Exp\big[|M_n-\e_f|^2\big]}_{=:I_{3,n}}\big),
\end{equation*}
hence our aim is to compute an upper bound for $I_{i,n}$ and each $i=1,2,3$. First,
\begin{align*}
I_{1,n}&=\frac{1}{\lambda^2|B_n|^2}\Exp\big[\sum_{i\geq 1}\mathds{1}_{\{Y_i\in B_n\}}(\log\fnYi(\xi_i)-\log\Exp[\fnYi(\xi_i)])^2\big]\\
&+\frac{1}{\lambda^2|B_n|^2}\Exp\big[\sideset{}{^{\neq}}\sum_{i,j\geq 1}\mathds{1}_{\{Y_i,Y_j\in B_n\}}(\log\fnYi(\xi_i)-\log\Exp[\fnYi(\xi_i)])\times\\
&\times(\log\fnYj(\xi_j)-\log\Exp[\fnYj(\xi_j)])\big]=:J_1+J_2.
\end{align*}
On the one hand, notice that by definition, 
 \[
h(Y_i,\xi_i,T_{Y_i}\Psi-\delta_{(o,\xi_i)}):=\mathds{1}_{\{Y_i\in B_n\}}(\log\fnYi(\xi_i)-\log\Exp[\fnYi(\xi_i)])^2
\]
depends on $(Y_i,\xi_i)$ and $T_{Y_i}\Psi-\delta_{(o,\xi_i)}$. Since $\Psi$ is an independently marked Poisson MPP, the Campbell-Mecke type formula in~\cite[p.129]{Spo13} yields
\begin{align*}
\frac{1}{\lambda^2|B_n|^2}\Exp\big[\sum_{i\geq 1}h(Y_i,\xi_i,T_{Y_i}\Psi-\delta_{(o,\xi_i)})\big]
=\frac{1}{\lambda|B_n|^2}\int_{\R^d}\int_M\Exp_{P^{o!}_{\eta}}[h(y,\eta,\Psi)]f(\eta)d\upsilon_g(\eta)dy,
\end{align*} 
where $\Exp_{P^{!}_{(o,\eta)}}$ denotes expectation with respect to the reduced Palm distribution of $\Psi$. Again because $\Psi$ is an independently marked Poisson MPP, %by Slivnyak theorem 
$P^{!}_{(o,\eta)}$ coincides with the distribution of $\Psi$ and we obtain
\begin{align*}
J_1&=\frac{1}{\lambda|B_n|^2}\int_{B_n}\int_M\Exp[(\log\fny(\eta)-\log\Exp[\fny(\eta)])^2]f(\eta)d\upsilon_g(\eta)dy.
\end{align*}
%space saving
%Now, recall that for any differentiable function $g$ and any $x,z\in\R$ we have
%\[
%|g(x)-g(z)|= |x-z||g'((1-\gamma)x+\gamma z)|,\qquad\gamma\in(0,1),
%\]
%which in the particular case of $g(x)=\log x$ yields
\edit{Notice that $\log x$ is a differentiable function, hence the mean value theorem yields}
\begin{equation}\label{eq mean value}
|\log x-\log z|=\frac{|x-z|}{|(1-\gamma)x+\gamma z|}\leq \frac{|x-z|}{\min\{x,z\}},\qquad x,z> 0
\end{equation}
for some \edit{$\gamma\in(0,1)$}. \edit{Since} $\Psi$ is stationary and by assumption (f1) $f$ is continuous, $\fny(\eta)$ converges to $f(\eta)$ a.s. for any $y\in\R^d$ and $\eta\in M$ by Theorem~\ref{thm as conv KDE}. Furthermore, in view of~\eqref{eq EF_n approx f}, $\Exp[F_n(\eta,\xio)]=(1+o(1))f(\eta)$, hence for $n\in\Nat$ large enough 
\begin{equation}\label{eq min fny f}
\min\{\fny(\eta),\Exp[\fny(\eta)]\}\geq \frac{1}{2} f(\eta).
\end{equation}
Applying inequality~\eqref{eq mean value} with $x=\fny(\eta)$ and $z=\Exp[\fny(\eta)]=\Exp[F_n(\eta,\xio)]$ we obtain
\begin{align*}
J_1&\leq \frac{4}{\lambda|B_n|^2}\int_{B_n}\int_M\frac{\Exp[(\fny(\eta)-\Exp[\fny(\eta)])^2]}{f(\eta)^2}f(\eta)d\upsilon_g(\eta)dy.
\end{align*}
Due to~\eqref{eq bound var f_n} and~\eqref{eq bound F_n2},
\begin{equation}\label{eq bound J_1}
J_1\leq\frac{4}{\lambda|B_n|}\int_M\frac{\Exp[F_n^2(\eta,\xio)]}{f(\eta)^2\lambda|B_n'|}f(\eta)d\upsilon_g(\eta)dy\leq\frac{8K_0C_\theta\upsilon_g(M)}{\lambda^2|B_n||B'_n|b_n^p}.
\end{equation}

Analogously, each summand in $J_2$ can be expressed as a function $h$ depending of $(Y_i,\xi_i)$, $(Y_j,\xi_j)$ and  $T_{Y_i}\Psi-\delta_{(o,\xi_i)}-\delta_{(Y_j,\xi_j)}$. Hence, the Campbell-Mecke type formula in~\cite[p.129]{Spo13} in the independently marked Poisson case yields
\begin{align*}
J_2&=\Exp\Big[\sideset{}{^{\neq}}\sum_{i,j\geq 1}h(Y_i,\xi_i,Y_j,\xi_j,T_{Y_i}\Psi-\delta_{(o,\xi_i)}-\delta_{(Y_j,\xi_j)})\Big]\\
&=\lambda\int_{\R^d}\int_{\R^d}\int_{M^2}\Exp_{P_{\eta_1,\eta_2}^{o,y_2!}}[h(y_1,\eta_1,y_2,\eta_2,\Psi)]f(\eta_1)f(\eta_2)\,d\upsilon_g(\eta_2)d\upsilon_g(\eta_1)dy_1\,dy_2\\
&=\lambda\int_{\R^d}\int_{\R^d}\int_{M^2}\Exp[h(y_1,\eta_1,y_2,\eta_2,\Psi)]f(\eta_1)f(\eta_2)\,d\upsilon_g(\eta_2)d\upsilon_g(\eta_1)dy_1\,dy_2,
\end{align*}
where last inequality follows from the independent marking of the Poisson MPP. Applying again Theorem~\ref{thm as conv KDE},~\eqref{eq mean value} and~\eqref{eq EF_n approx f}, we obtain for $n\in\Nat$ large enough
\begin{align*}
J_2&\leq\frac{4}{\lambda|B_n|^2}\int_{(B_n\times M)^2}\frac{\Cov\big(\fnyI(\eta_1),\fnyII(\eta_2)\big)}{f(\eta_1)f(\eta_2)} f(\eta_1)f(\eta_2)\,d\upsilon_g(\eta_2)d\upsilon_g(\eta_1)dy_1\,dy_2.
\end{align*}
In view of~\eqref{eq Efn vs EFn} and the Campbell theorem,
\begin{align*}
&\Cov\big(\fnyI(\eta_1),\fnyII(\eta_2)\big)=\Exp[\fnyI(\eta_1)\fnyII(\eta_2)]-\Exp[\fnyI(\eta_1)]\Exp[\fnyII(\eta_2)]\\
&=\frac{1}{\lambda^2|B_n'|^2}\Exp\big[\sum_{i\geq 1}\mathds{1}_{\{Y_i\in(B_n'+y_1)\cap(B_n'+y_2)\}}F_n(\eta_1,\xi_i)F_n(\eta_2,\xi_i)\big]\\
&+\frac{1}{\lambda^2|B_n'|^2}\Exp\big[\sideset{}{^{\neq}}\sum_{i,j\geq 1}\mathds{1}_{\{Y_i\in B_n'+y_1\}}\mathds{1}_{\{Y_j\in B_n'+y_2\}}F_n(\eta_1,\xi_i)F_n(\eta_2,\xi_j)\big]\\
&-\Exp[F_n(\eta_1,\xio)]\Exp[F_n(\eta_2,\xio)]\\
&=\frac{|(B_n'+y_1)\cap (B_n'+y_2)|}{\lambda|B_n'|^2}\Exp[F_n(\eta_1,\xio)F_n(\eta_2,\xio)]\leq\frac{1}{\lambda|B_n'|}\Exp[F_n(\eta_1,\xio)F_n(\eta_2,\xio)].
\end{align*}
Fubini's theorem and Lemma~\ref{lem: IntK is 1} yield
\begin{equation*}
J_2\leq\frac{4}{\lambda^2|B_n'|}\int_{M^2}\Exp[F_n(\eta_1,\xio)F_n(\eta_2,\xio)]d\upsilon_g(\eta_1)d\upsilon_g(\eta_2)=\frac{4}{\lambda^2|B_n'|},
\end{equation*}
which together with~\eqref{eq bound J_1} leads to $I_{1,n}\leq \frac{8K_0C_\theta\upsilon_g(M)}{\lambda^2|B_n||B'_n|b_n^p}+\frac{4}{\lambda^2|B_n'|}$. Secondly, due to the stationarity of $\Psi$ and the Campbell theorem we have for large $n\in\Nat$
\begin{align*}
&I_{2,n}=\frac{1}{\lambda^2|B_n|^2}\Exp\big[\sum_{i\geq 1}\mathds{1}_{\{Y_i\in B_n\}}(\log\Exp[\fnYi(\xi_i)]-\log f(\xi_i))^2\big]\\
&+\frac{1}{\lambda^2|B_n|^2}\Exp\big[\sideset{}{^{\neq}}\sum_{i,j\geq 1}\mathds{1}_{\{Y_i,Y_j\in B_n\}}(\log\Exp[\fnYi(\xi_i)]-\log f(\xi_i))(\log\Exp[\fnYj(\xi_j)]-\log f(\xi_j))\big]\\
&=\frac{1}{\lambda|B_n|}\Exp[(\log\Exp[\fno(\xio)]-\log f(\xio))^2]+(\Exp[\log \Exp[\fno(\xio)]-\log f(\xio)])^2\\
&\leq 2\Exp[(\log\Exp[\fno(\xio)]-\log f(\xio))^2].
\end{align*}
On the other hand, by~\eqref{eq min fny f} and Lemma~\ref{lemma Ef_n2/f_n integ} we get
\begin{align*}
&\Exp[(\log\Exp[\fno(\xio)]-\log f(\xio))^2]
%place saving
%=\int_{\supp f}(\log\Exp[\fno(\eta)]-\log f(\eta))^2f(\eta)d\upsilon(\eta)\\
&\leq 4\int_{\supp f}\frac{(\Exp[\fno(\eta)]-f(\eta))^2}{f(\eta)}d\upsilon(\eta)\leq 16b_n^2L_2,
\end{align*}
so that $I_{2,n}\leq 32b_n^2L_2$.

Finally, note that $\e_f=-\Exp[\log f(\xio)]$. Applying once again the Campbell theorem we obtain
\begin{align*}
I_{3,n}&=\frac{1}{\lambda^2|B_n|^2}\bigg(\Exp\big[\sum_{i\geq 1}\mathds{1}_{\{Y_i\in B_n\}}\log^2f(\xi_i)\big]+\Exp\big[\sideset{}{^{\neq}}\sum_{i,j\geq 1}\mathds{1}_{\{Y_i,Y_j\in B_n\}}\log f(\xi_i)\log f(\xi_j)\big]\bigg)\\
&+\frac{2}{\lambda|B_n|}\Exp\big[\sum_{i\geq 1}\mathds{1}_{\{Y_i\in B_n\}}\log f(\xi_i)\big]\e_f+\e_f^2\\
&=\frac{1}{\lambda|B_n|}\Exp[\log^2f(\xio)]+\big(\Exp[\log f(\xio)]\big)^2+2\Exp[\log f(\xio)]\e_f+\e_f^2\\
&=\frac{1}{\lambda|B_n|}\Exp[\log^2f(\xio)]=\frac{L_1}{\lambda|B_n|}.
\end{align*}
\begin{rem}
The proof of Theorem~\ref{thm L2 conv EE} gives an explicit bound of the error that can be used to find an optimal sequence of bandwidths. In this case analogous calculations to Remark~\ref{rem optimal b_n density} lead to $b_{\edit{n},opt}=\Big(\frac{pK_0C_\theta\upsilon_g(M)}{4L_2\lambda^2|B_n||B'_n|}\Big)^{\frac{1}{p+2}}$.
\end{rem}
%----------------------------------------- m-dependent version -----------------------------------------
%----------------------------------------------------------------------------------------------------------------------------------------------------------------------------------------------------------------
%------------------ New version of the CLT based on Chen-Shao's paper !! --------------------------
\section{Central limit theorem for entropy} \label{sectEntrCLT}
\edit{If the window $B'_n$ satisfies that $B_n'\subset B_n$, and $m_n$ is the diameter of $B'_n$, }the estimator $\widehat{\e}_f(B_n)$ can be seen as a normalized random sum of elements of a stationary \emph{$m_n-$dependent random field}. %, where $m_n$ is the diameter of $B_n'\subset B_n$ from the definition of $\widehat{\e}_f(B_n)$ in~\eqref{eq def EE}. 
\edit{In this section, we present} a CLT for a modified version of the original estimator.

Let us start by fixing some notation. In general, we use uppercase for coordinates and lowercase for enumerating elements. For $\K\in\{\Nat,\Z,\R\}$, any $j\in\K^d$ will therefore be written as $j=(j^1,\ldots, j^d)$, while $j_1,j_2,\ldots$ will denote a sequence in $\K^d$. Moreover, we write $\mathbf{t}=(t,\ldots,t)\in\K^d$ for any $t\in\K$. We set $C_y:=\times_{k=1}^d [0,y^k)$ for any $y\in\R_+^d$ and $V_j:=C_j\cap\Nat^d$ for $j\in\Nat^d$. In particular, $C_{\mathbf{t}}=[0,t)^d$ for $t\in\R_+$.

A random field $\{X_j,\,j\in\K^d\}$ is said to be $m-$\emph{dependent} for some \edit{$m>0$} if for any finite sets $I,J\subset\K^d$ the random vectors $(X_i)_{i\in I}$ and $(X_j)_{j\in J}$ are independent whenever $\norm{i-j}_\infty>m$ for all $i\in I$ and $j\in J$.

In stochastic geometry, $m$-dependent random fields often appear in connection with models based on independently marked point processes. A CLT for sums of $m$-dependent random fields was first investigated by Ros\'en~\cite{Ros69} and improved by Heinrich~\cite{Hei88b}. These results have been extended in the last years to weaker dependence structures
%Reference added
\edit{(see~\cite{CS04,Spo13} and references therein).}
%. We refer to~\cite{Spo13} for an overview on the subject.
%-------------------------------------------------------------------------------------------------------------------------------------------------------------------------------------------------------
\subsection{Theoretical results}
\new{Our CLT is based on the following result by Chen and Shao~\cite{CS04} for deterministic sums of $m$-dependent random fields.
%--------- new theorem!! ---------------------
\begin{thm}\cite[Theorem 2.6]{CS04}\label{thm Chen Shao}
Let $\{X_i\}_{i\in I}$, $I\subseteq\Nat^d$, be a centered $m$-dependent random field such that $\Exp[|X_i|^q]<\infty$ for some $2<q\leq 3$ and any $i\in I$. Then,
\[
\sup_{x\in\R}|F(x)-\Phi(x)|\leq 75(10m+1)^{(q-1)d}\big(\Var \sum_{i\in I}X_i\big)^{-q/2}\sum_{i\in I}\Exp[|X_i|^q],
\]
where $F$ is the distribution function of $\big(\Var \sum_{i\in I}X_i\big)^{-1/2}\sum_{i\in I}X_i$.
\end{thm}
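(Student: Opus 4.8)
This is \cite[Theorem 2.6]{CS04} quoted verbatim, so in our development it is invoked as a black box; a proof proposal would nevertheless run through Stein's method for normal approximation under local dependence, organised as follows. Write $\sigma^2:=\Var\sum_{i\in I}X_i$, $\bar X_i:=\sigma^{-1}X_i$ and $W:=\sum_{i\in I}\bar X_i$, so $\Exp[W]=0$ and $\Var(W)=1$. For each fixed $x\in\R$ let $f=f_x$ solve the Stein equation $f'(w)-wf(w)=\mathds{1}_{\{w\le x\}}-\Phi(x)$; it satisfies $\norm{f_x}_\infty\le 1$, $\norm{f_x'}_\infty\le 1$ and is $1$-Lipschitz. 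Since $F(x)-\Phi(x)=\Exp[f_x'(W)-Wf_x(W)]$, it suffices to bound $\abs{\Exp[f_x'(W)]-\Exp[Wf_x(W)]}$ uniformly in $x$.

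\textbf{The local decomposition.} For the $m$-dependent field, associate to each $i\in I$ the blocks $A_i:=\{j\in I:\norm{i-j}_\infty\le m\}$ and $B_i:=\{j\in I:\norm{i-j}_\infty\le 2m\}$; then $X_i$ is independent of $\{X_j:j\notin A_i\}$, the family $\{X_j:j\in A_i\}$ is independent of $\{X_j:j\notin B_i\}$, and $\Exp[X_iX_j]=0$ whenever $j\notin A_i$. With $\widehat\xi_i:=\sum_{j\in A_i}\bar X_j$, $W_i:=W-\widehat\xi_i$ and $W_{B_i}:=\sum_{j\notin B_i}\bar X_j$, expand $\Exp[Wf(W)]=\sum_{i\in I}\Exp[\bar X_i f(W)]$ and write each term as $\Exp[\bar X_i(f(W)-f(W_i))]+\Exp[\bar X_i f(W_i)]$; the second summand vanishes since $W_i$ is independent of $X_i$ and $\Exp[\bar X_i]=0$. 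A first-order Taylor step gives $f(W)-f(W_i)=\widehat\xi_i f'(W_i)+(\text{rem.})$, and replacing $W_i$ by the independent $W_{B_i}$ in the leading term yields $\sum_i\Exp[\bar X_i\widehat\xi_i f'(W_i)]=\sum_i\Exp[\bar X_i\widehat\xi_i]\,\Exp[f'(W_{B_i})]+(\text{small})$. Because $\sum_{i\in I}\Exp[\bar X_i\widehat\xi_i]=\sum_{i\in I}\sum_{j\in A_i}\Exp[\bar X_i\bar X_j]=\sum_{i,j\in I}\Exp[\bar X_i\bar X_j]=\Var(W)=1$, this leading term reproduces $\Exp[f'(W)]$ up to (i) the discrepancy between $\Exp[f'(W_{B_i})]$ and $\Exp[f'(W)]$, absorbed by one further Taylor step, and (ii) the Taylor remainders, each a product of at least three factors $\bar X_j$ ranging over the blocks $A_i$, $B_i$.

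\textbf{Error terms and the constant.} Every such product is controlled by H\"older's inequality with exponent $q\in(2,3]$, using $\Exp\abs{\bar X_j}^2\le(\Exp\abs{\bar X_j}^q)^{2/q}$, and then summed over $i$ and over the neighbourhoods. Since $\abs{A_i}\le(2m+1)^d$, $\abs{B_i}\le(4m+1)^d$, and the combinatorial factors that arise (e.g. the number of pairs $(i,k)$ with a fixed index lying in $A_i$ and $k\in A_i$) are products of such counts, careful bookkeeping produces the base $10m+1$ and the exponent $(q-1)d$, giving a bound of the shape $c\,(10m+1)^{(q-1)d}\sum_{i\in I}\Exp\abs{\bar X_i}^q=c\,(10m+1)^{(q-1)d}\,\sigma^{-q}\sum_{i\in I}\Exp\abs{X_i}^q$.

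\textbf{Main obstacle.} The delicate point is that $f_x$ has a jump in its derivative at $x$, so it is merely Lipschitz and not $C^1$; the crude Taylor estimates above do not by themselves give a bound \emph{uniform} in $x$ and at best yield a smoothed-distance estimate. This is overcome by Chen and Shao's concentration-inequality technique: one estimates probabilities of the type $\Pro(x-t\le W_{B_i}\le x+t)$ for $t$ comparable to the increments $\widehat\xi_i$, exploiting $m$-dependence to peel off an auxiliary sum independent of the relevant block, and then integrates these estimates against the increments. Feeding the resulting concentration bounds back into the error analysis and optimising the numerical constants in the neighbourhood counts gives the explicit factor $75$ and the stated form of the right-hand side. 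In the present paper we simply invoke \cite[Theorem 2.6]{CS04}; the sketch above indicates why its bound depends on $m$, $q$, $d$ and on $\sum_{i\in I}\Exp\abs{X_i}^q$ exactly as displayed.
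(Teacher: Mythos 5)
The paper does not prove this statement at all---it is imported verbatim from Chen and Shao \cite[Theorem 2.6]{CS04} and used as a black box, which is exactly how you treat it. Your accompanying sketch (Stein equation, local dependence neighbourhoods $A_i\subset B_i$, the Taylor/independence decomposition of $\Exp[Wf(W)]$, and the concentration-inequality step to handle the non-smoothness of $f_x$ and obtain a bound uniform in $x$) is a faithful outline of the actual Chen--Shao argument, so there is nothing to correct or compare.
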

}

We give an extension of this theorem to random sums of stationary \edit{$m_n$}-dependent random fields indexed in $\R^d_+$. For simplicity, we assume that our observation windows are cubic, i.e. $B_n:=C_{\mathbf{p_n}}$ with $p_n\to\infty$ as $n\to\infty$.
\new{
\begin{cor}\label{cor CLT mdep random sum}
Let $\{X_{n,y},\,y\in B_n\}_{n\in\Nat}$ be a sequence of stationary centered $m_n$-dependent random fields and let $\Pi$ be a stationary Poisson point process on $\R^d_+$. Assume that 
\begin{equation}\label{eq A}
\sup_{n\in\Nat}\Exp\Big[\Big|\sum_{y\in\Pi\cap C_{\mathbf{1}}}X_{n,y}\Big|^q\Big]<\infty\tag{A}
\end{equation}
for some $2<q\leq 3$. Then,
\begin{equation*}
\sup_{x\in\R}|F_n(x)-\Phi(x)|\leq 75(10m_n+11)^{(q-1)d}|B_n|\sigma_n^{-q}\Exp\Big[\Big|\sum_{y\in\Pi\cap C_{\mathbf{1}}}X_{n,y}\Big|^q\Big],
\end{equation*}
where $\sigma_n^2=\Var\sum\limits_{y\in\Pi\cap B_n }X_{n,y}$ and $F_n$ is the distribution function of $\sum\limits_{y\in\Pi\cap B_n }X_{n,y}/\sigma_n$.
\end{cor}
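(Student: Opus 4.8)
The plan is to reduce Corollary~\ref{cor CLT mdep random sum} to Theorem~\ref{thm Chen Shao} by constructing an auxiliary discrete $m$-dependent random field whose summands are the block sums of $X_{n,y}$ over unit cubes. First I would fix $n$ and partition the cubic window $B_n=C_{\mathbf{p_n}}$ into the unit cells $C_{\mathbf{1}}+j$ for $j\in V_{\lceil p_n\rceil}$ (handling the boundary layer by enlarging the window slightly or noting it contributes a bounded error), and define
\begin{equation*}
Z_{n,j}:=\sum_{y\in\Pi\cap(C_{\mathbf{1}}+j)}X_{n,y},\qquad j\in I:=V_{\lceil p_n\rceil}\subset\Nat^d.
\end{equation*}
Since $\Pi$ is a Poisson process and the $\{X_{n,y}\}$ form a stationary $m_n$-dependent field that is (for fixed $n$) a deterministic functional of the marked configuration, the field $\{Z_{n,j}\}_{j\in I}$ is stationary; by the independence properties of the Poisson process on disjoint sets together with the $m_n$-dependence of $\{X_{n,y}\}$, the cell sums $Z_{n,j}$ and $Z_{n,j'}$ are independent as soon as the cells $C_{\mathbf{1}}+j$ and $C_{\mathbf{1}}+j'$ are separated by more than $m_n$ in each coordinate, i.e. as soon as $\norm{j-j'}_\infty>m_n+1$. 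Hence $\{Z_{n,j}\}_{j\in I}$ is $m$-dependent with $m=m_n+1$. Centering is inherited since each $X_{n,y}$ is centered and the Campbell/Mecke formula gives $\Exp[Z_{n,j}]=\lambda\int_{C_{\mathbf{1}}}\Exp[X_{n,y}]\,dy=0$ (here one uses that $X_{n,y}$ has the same distribution for all $y$, by stationarity). The moment hypothesis~\eqref{eq A} is precisely $\Exp[|Z_{n,j}|^q]=\Exp[|Z_{n,\mathbf{0}}|^q]<\infty$ uniformly in $n$, so the field meets the assumptions of Theorem~\ref{thm Chen Shao}.

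Next I would simply observe that $\sum_{j\in I}Z_{n,j}=\sum_{y\in\Pi\cap B_n}X_{n,y}$, so the normalized sum and its distribution function $F_n$ coincide with the objects appearing in Theorem~\ref{thm Chen Shao} applied to $\{Z_{n,j}\}_{j\in I}$, and $\sigma_n^2=\Var\sum_{j\in I}Z_{n,j}$. Plugging into the Chen--Shao bound gives
\begin{equation*}
\sup_{x\in\R}|F_n(x)-\Phi(x)|\leq 75\bigl(10(m_n+1)+1\bigr)^{(q-1)d}\sigma_n^{-q}\sum_{j\in I}\Exp[|Z_{n,j}|^q].
\end{equation*}
Since $10(m_n+1)+1=10m_n+11$, the prefactor is exactly the one in the statement. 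Finally, $|I|=|V_{\lceil p_n\rceil}|$ is, up to the harmless boundary discrepancy, equal to $|B_n|$, and by stationarity $\sum_{j\in I}\Exp[|Z_{n,j}|^q]=|I|\,\Exp[|Z_{n,\mathbf{0}}|^q]=|B_n|\,\Exp\bigl[\bigl|\sum_{y\in\Pi\cap C_{\mathbf{1}}}X_{n,y}\bigr|^q\bigr]$, which yields the claimed inequality.

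The main obstacle I anticipate is making the cell decomposition rigorous at the boundary and reconciling the continuous index set $B_n\subset\R^d_+$ with the integer index set required by Theorem~\ref{thm Chen Shao}: strictly, $B_n=C_{\mathbf{p_n}}$ need not be a union of integer-translated unit cubes when $p_n\notin\Nat$. The clean fix is either to assume $p_n\in\Nat$ (consistent with ``$p_n\to\infty$'' and sufficient for the asymptotic application) or to absorb the at most $O(p_n^{d-1})$ fractional boundary cells into the window, using that this perturbs neither the order of $|B_n|$ nor, under~\eqref{eq A}, the $q$-th moment sum; one should also check that this does not disturb the $m_n$-dependence, which it does not since enlarging cells only strengthens separation. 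A secondary, purely bookkeeping point is verifying that $\{Z_{n,j}\}$ is genuinely $m$-dependent in the sense of the definition in Section~\ref{sectEntrCLT} (independence of the \emph{vectors} $(Z_{n,j})_{j\in J_1}$ and $(Z_{n,j})_{j\in J_2}$, not just pairwise), which follows from the independence of the Poisson restrictions $\Pi\cap\bigcup_{j\in J_1}(C_{\mathbf 1}+j)$ and $\Pi\cap\bigcup_{j\in J_2}(C_{\mathbf 1}+j)$ on disjoint, well-separated regions together with the $m_n$-dependence of the underlying field of marks.
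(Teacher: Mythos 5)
Your proposal is correct and takes essentially the same route as the paper: the paper's proof defines the identical block sums $Z_{n,j}:=\sum_{y\in\Pi\cap(C_{\mathbf{1}}+j)}X_{n,y}$, declares $\{Z_{n,j}\}_{j\in V_{\mathbf{p_n}}}$ to be a stationary centered $(m_n+1)$-dependent field satisfying the moment condition, and applies Theorem~\ref{thm Chen Shao} with $I=V_{\mathbf{p_n}}$. The boundary and integrality issues you flag (non-integer $p_n$, verifying joint rather than pairwise independence) are genuine loose ends that the paper dismisses with ``Obviously,'' so your more careful treatment only adds rigor to the same argument.
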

}
\begin{proof}
For each $j\in\Nat^d$ and $n\in\Nat$, define $Z_{n,j}:=\sum_{y\in\Pi\cap(C_{\mathbf{1}}+j)}X_{n,y}$. Obviously, $\{Z_{n,j}\}_{j\in V_{\mathbf{p_n}}}$ is a stationary centered $(m_n+1)$-dependent random field with $\sup_{n\in\Nat}\Exp[|Z_{n,j}|^q]<\infty$ for any $j\in V_{\mathbf{p_n}}$ and $2<q\leq 3$. Hence, Theorem~\ref{thm Chen Shao} with $I=V_{\mathbf{p_n}}$ yields the stated bound.
\end{proof}
%--------new remark
\vspace*{-.25in}
\new{\begin{rem}\label{rem indep PP}
Note that Corollary~\ref{cor CLT mdep random sum} does not require independence between the random fields $\{X_{n,y}\}_{y\in B_n}$ and the point process $\Pi$. If independence is provided, the Campbell theorem together with the generalized Cauchy-Schwartz inequality and the stationarity of $\Pi$ lead to
\begin{align*}
&\Exp\Big[\Big|\sum_{y\in\Pi\cap C_1}X_{n,y}\Big|^3\Big]%=\Exp\big[\sum_{y\in\Pi\cap C_1}X^3_{n,y}\big]+\Exp\big[\sideset{}{^{\neq}}\sum_{y_1,y_2\in\Pi\cap C_1}X_{n,y_1}^2X_{n,y_2}\big]\\
%&+\Exp\big[\sideset{}{^{\neq}}\sum_{y_1,y_2,y_3\in\Pi\cap C_1}X_{n,y_1}X_{n,y_2}X_{n,y_3}\big]\\
\leq\lambda\int_{C_1}\Exp[|X_{n,y}|^3]dy+\lambda\int_{C_1^2}\Exp[X_{n,y_1}^2 |X_{n,y_2}|]\,\alpha^{(2)}(dy_1,dy_2)\\
&+\lambda\int_{C_1^3}\Exp[|X_{n,y_1} X_{n,y_2}X_{n,y_3}|]\,\alpha^{(3)}(dy_1,dy_2,dy_3)\\
%&=\lambda\int_{C_1}\Exp[|X_{n,\mathbf{0}}|^3]dy+\lambda\int_{C_1^2}\E[X_{n,\mathbf{0}}^2 |X_{n,y_2-y_1}|]\,\alpha^{(2)}(dy_1,dy_2)\\
%&+\lambda\int_{C_1^3}\Exp[|X_{n,\mathbf{0}} X_{n,y_2-y_1}X_{n,y_3-y_1}|]\,\alpha^{(3)}(dy_1,dy_2,dy_3)\\
&\leq\lambda\Exp[|X_{n,\mathbf{0}}|^3](1+\alpha^{(2)}(C_1^2)+\alpha^{(3)}(C_1^3))=\lambda\Exp[|X_{n,\mathbf{0}}|^3](1+\lambda^2+\lambda^3),
\end{align*}
where $\lambda>0$ is the intensity of $\Pi$ and $\alpha^{(k)}$, $k=2,3$, denotes the $k$-th order factorial moment measure of $\Pi$ (see~\cite[Chapter 1]{SKM87} for explicit formulas in the Poisson case). Thus, we may substitute condition~\eqref{eq A} by
\begin{equation}\label{eq A prime}
\sup_{n\in\Nat}\Exp[|X_{n,\mathbf{0}}|^3]<\infty\tag{A'}
\end{equation}
and obtain Corollary~\ref{cor CLT mdep random sum} in the case $q=3$.
\end{rem}
}
%---------- old remark
%\begin{rem}
%Note that Corollary~\ref{thm CLT mdep random sum} does not require independence between the random fields $\{X_{n,y}\}_{y\in\R^d_+}$ and the point process $\Pi$. If independence is provided, applying the Campbell theorem and the stationarity of $\Pi$ we get
%\begin{align*}
%\Exp\big[\big(\sum_{y\in\Pi\cap C_{j}}X_{n,y}\big)^2\big]&=\Exp\big[\sum_{y\in\Pi\cap C_{j}}X^2_{n,y}\big]+\Exp\big[\sideset{}{^{\neq}}\sum_{y_1,y_2\in\Pi\cap C_{j}}X_{n,y_1}X_{n,y_2}\big]\\
%&=\lambda\int_{C_j}\Exp[\Xno^2]dy+\lambda\int_{C_j\times C_j}\Exp[\Xno X_{n,y_2-y_1}]\,\alpha^{(2)}(dy_1,dy_2)\\
%&=\lambda|C_j|\Exp[\Xno^2]+\lambda^2\int_{\R^d}|C_j\cap(C_j-y)|\Cov(\Xno,\Xny)dy.
%\end{align*}
%Thus, condition (A1) can be substituted by the existence of $c>0$ such that
%%correction: nature --> \Nat
%\begin{equation}\tag{A1'}
%\sup_{n\in\edit{\Nat}}\Big(\Exp[\Xno^2]+\int_{\R_+^d}|\Cov(\Xno X_{n,y})|dy\Big)<c.
%\end{equation}
%Condition (A3) can now be formulated as
%\begin{equation}\tag{A3'}
%\liminf_{n\to\infty}\Big(\Exp[\Xno^2]+\lambda\int_{\R_+^d}\frac{|\Cqn\cap(\Cqn-y)|}{|\Cqn|}\Cov(\Xno, \Xny)\,dy\Big)>0.
%\end{equation}
%\end{rem}
Before applying \edit{Corollary~\ref{cor CLT mdep random sum} and Remark~\ref{rem indep PP}} to our entropy estimator, we want to investigate under which conditions the limiting variance exists. The following theorem is an extension of~\cite[Theorem 1.8, p.175]{BS07} to random sums of 
%correction: wide-stationary -->  wide-sense stationary
\edit{wide-sense} stationary random fields indexed in $\R^d$.

\begin{thm}\label{thm lim var}
Let $\{X_{n,y},\,y\in\R^d\}_{n\in\Nat}$ be a sequence of wide-sense stationary measurable centered random fields and let $\Pi$ be a homogeneous Poisson point process of intensity $\lambda>0$ independent of $\{X_{n,y},\,y\in\R^d\}$. Assume that
\begin{equation}\label{A1 thm existence var}
\lim_{p\to\infty}\limsup_{n\to\infty}\int_{\R^d\setminus (-p,p)^d}|\Cov(\Xno,\Xny)|\,dy=0,
\end{equation}
and
\begin{equation}\label{A2 thm existence var}
\sup_{n\in\Nat}\int_{\R^d}|\Cov(\Xno,\Xny)|\,dy<\infty.
\end{equation}
If the limit 
\begin{equation*}
\sigma^2:=\lim_{n\to\infty}\Big(\lambda\Exp[\Xno^2]+\lambda^2\int_{\R^d}\Cov(\Xno,\Xny)\,dy\Big)
\end{equation*}
exists and is positive, then
\begin{equation}\label{lim var}
\frac{1}{|U_n|}\Var\Big(\sum_{y\in\Pi\cap U_n}X_{n,y}\Big)\xrightarrow{n\to\infty}\sigma^2
\end{equation}
for any \edit{VH-growing }sequence $\{U_n\}_{n\in\Nat}\edit{\subseteq\R^d}$.
\end{thm}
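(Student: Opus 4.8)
The plan is to compute $\Var\big(\sum_{y\in\Pi\cap U_n}X_{n,y}\big)$ exactly via Campbell-type formulas for the Poisson process $\Pi$, exploiting that $\Pi$ and the field $\{\Xny\}$ are independent, and then to show that after dividing by $|U_n|$ the result converges to $\sigma^2$. Write $S_n:=\sum_{y\in\Pi\cap U_n}X_{n,y}$. Since the field is centered and independent of $\Pi$, Campbell's theorem gives $\Exp[S_n]=\lambda\int_{U_n}\Exp[\Xny]\,dy=0$, so $\Var(S_n)=\Exp[S_n^2]$. Splitting the square into the diagonal sum and the $\neq$-sum, applying the Campbell--Mecke formula together with the fact that the second-order factorial moment measure of $\Pi$ equals $\lambda^2\,dy\,dz$, and using independence to factor out the field expectations, one obtains
\[
\Var(S_n)=\lambda\int_{U_n}\Exp[\Xny^2]\,dy+\lambda^2\int_{U_n}\int_{U_n}\Cov(X_{n,y},X_{n,z})\,dy\,dz.
\]
Wide-sense stationarity lets me replace $\Exp[\Xny^2]$ by $\Exp[\Xno^2]$ and $\Cov(X_{n,y},X_{n,z})$ by $C_n(y-z):=\Cov(\Xno,X_{n,y-z})$, so after the substitution $h=y-z$ the double integral becomes $\int_{\R^d}C_n(h)\,\gamma_n(h)\,dh$, where $\gamma_n(h):=|U_n\cap(U_n-h)|$ is the set covariance of $U_n$. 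Dividing by $|U_n|$,
\[
\frac{1}{|U_n|}\Var(S_n)=\lambda\Exp[\Xno^2]+\lambda^2\int_{\R^d}C_n(h)\,g_n(h)\,dh,\qquad g_n(h):=\frac{\gamma_n(h)}{|U_n|}\in[0,1],\quad g_n(\mathbf{0})=1.
\]

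It then remains to prove $\int_{\R^d}C_n(h)\big(g_n(h)-1\big)\,dh\to0$, since by the definition of $\sigma^2$ the quantity $\lambda\Exp[\Xno^2]+\lambda^2\int_{\R^d}C_n(h)\,dh$ already tends to $\sigma^2$; from~\eqref{A2 thm existence var} the integrals $\int_{\R^d}|C_n(h)|\,dh$ are uniformly bounded, and combined with convergence of $\sigma^2$ this forces $M:=\sup_{n\in\Nat}\Exp[\Xno^2]<\infty$, whence by Cauchy--Schwarz $|C_n(h)|\leq M$ for all $n$ and $h$. To control $\int_{\R^d}C_n(h)(g_n(h)-1)\,dh$ I fix $p>0$ and split the integral over $(-p,p)^d$ and its complement. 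On the complement I use $|g_n-1|\leq1$, bounding the contribution by $\int_{\R^d\setminus(-p,p)^d}|C_n(h)|\,dh$, whose $\limsup_{n\to\infty}$ vanishes as $p\to\infty$ by~\eqref{A1 thm existence var}. On $(-p,p)^d$ I use $|C_n(h)|\leq M$ and the VH-growing property: if $x\in U_n$ but $x+h\notin U_n$, the segment $[x,x+h]$ meets $\partial U_n$, so $x\in\partial U_n\oplus B(o,\norm{h})$; hence $|U_n\setminus(U_n-h)|\leq|\partial U_n\oplus B(o,p\sqrt d)|$ for every $h\in(-p,p)^d$, giving $\sup_{h\in(-p,p)^d}|g_n(h)-1|\leq|\partial U_n\oplus B(o,p\sqrt d)|/|U_n|\to0$, and therefore $\int_{(-p,p)^d}|C_n(h)(g_n(h)-1)|\,dh\leq M(2p)^d\,|\partial U_n\oplus B(o,p\sqrt d)|/|U_n|\to0$ as $n\to\infty$ for each fixed $p$. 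Combining the two estimates and letting $p\to\infty$ yields $\int_{\R^d}C_n(h)(g_n(h)-1)\,dh\to0$, hence~\eqref{lim var}.

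I expect the main obstacle — beyond the bookkeeping of the Campbell--Mecke identities and the Fubini interchanges, which are justified by the assumed joint measurability — to be the uniform control of $g_n-1$ near the origin together with the uniform-in-$n$ tail bound~\eqref{A1 thm existence var}: it is precisely the interplay of the VH-growing hypothesis (which makes $g_n\to1$ uniformly on compacta) with~\eqref{A1 thm existence var} that makes the ``finite window plus negligible tail'' splitting legitimate simultaneously for all $n$, and the derivation of $\sup_n\Exp[\Xno^2]<\infty$ from the existence of $\sigma^2$ and~\eqref{A2 thm existence var} is the small extra point that keeps the constant $M$ under control.
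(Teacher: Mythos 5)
Your proof is correct and follows essentially the same route as the paper's: the identical Campbell-theorem variance identity, followed by a split of the correction term $\frac{1}{|U_n|}\int_{U_n}\int_{U_n^c}\Cov(X_{n,y_1},X_{n,y_2})\,dy_1\,dy_2$ (which you parametrize via the set covariance $\gamma_n$) into a near-lag piece controlled by the VH boundary ratio and a far-lag piece controlled by assumption~\eqref{A1 thm existence var}. The only cosmetic difference is that you bound the near-lag piece by $M(2p)^d\,|(\partial U_n)_{p\sqrt d}|/|U_n|$ after extracting $M=\sup_n\Exp[\Xno^2]<\infty$ from the existence of $\sigma^2$ and~\eqref{A2 thm existence var}, whereas the paper bounds the corresponding boundary-layer term directly by $\frac{|(\partial U_n)_p|}{|U_n|}$ times the uniformly bounded covariance integral from~\eqref{A2 thm existence var}.
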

\begin{proof}
Since $\Pi$ is a Poisson point process independent of $\{X_{n,y}\}_{y\in\R^d}$, it follows from the Campbell theorem and the wide-sense stationarity that 
\begin{align*}
\Var\Big(\sum_{y\in\Pi\cap U_n}\Xny\Big)
&=\lambda|U_n|\Exp[\Xno^2]+\lambda^2|U_n|\int_{\R^d}\Cov(\Xno,\Xny)\,dy\\
&-\lambda^2\int_{U_n}\int_{U_n^c}\Cov(\XnI,\XnII)\,dy_1dy_2.
\end{align*}
Following the proof of~\cite[Theorem 1.8]{BS07}, let $p>0$ be arbitrary and set $G_n:=U_n\cap(\partial U_n)_p$, $W_n:=U_n\setminus G_n$, where $(\partial U_n)_p:=\partial U_n\oplus B(\mathbf{0},p)$ denotes the $p-$neighborhood of $\partial U_n\subset\R^d$. From the previous calculation we have
\begin{align*}
&\lambda|U_n|\Exp[\Xno^2]+\lambda^2|U_n|\int_{\R^d}\Cov(\Xno,\Xny)\,dy-\Var\Big(\sum_{y\in\Pi\cap U_n}\Xny\Big)\\
&=\lambda^2\int_{G_n}\int_{U_n^c}\Cov(\XnI,\XnII)\,dy_1dy_2+\lambda^2\int_{W_n}\int_{U_n^c}\Cov(\XnI,\XnII)\,dy_1dy_2\\
&=:R_{n,1}+R_{n,2}.
\end{align*}
On the one hand, $|G_n|\leq|(\partial U_n)_p|$ and since $\{U_n\}_{n\in\Nat}$ is \edit{VH}-growing, assumption~\eqref{A2 thm existence var} yields
\begin{equation*}
\frac{|R_{n,1}|}{|U_n|}\leq\frac{|(\partial U_n|)_p}{|U_n|}\lambda^2\int_{\R^d}|\Cov(\Xno,\Xny)|\,dy\xrightarrow{n\to\infty}0.
\end{equation*}
On the other hand, $\operatorname{dist}(W_n,U_n^c)\geq p$ and $|W_n|\leq |U_n|$, hence
\begin{equation*}
\frac{|R_{n,2}|}{|U_n|}\leq\frac{|W_n|}{|U_n|}\lambda^2\int_{\R^d\setminus (-p,p)^d}|\Cov(\Xno,\Xny)|\,dy\leq\lambda^2\int_{\R^d\setminus (-p,p)^d}|\Cov(\Xno,\Xny)|\,dy
\end{equation*}
and in view of assumption~\eqref{A1 thm existence var} the convergence in~\eqref{lim var} is established.
\end{proof}

The same holds under weaker assumptions if the random fields $\{X_{n,y},\,y\in\R^d\}_{n\in\Nat}$ are $m_n-$dependent.

\begin{cor}\label{cor lim var mdep}
Let $\{X_{n,y},\,y\in\R^d\}_{n\in\Nat}$ be a sequence of wide-sense stationary measurable centered $m_n$-dependent random fields and let $\Pi$ be a homogeneous Poisson point process of intensity $\lambda>0$ independent of $\{X_{n,y},\,y\in\R^d\}_{n\in\Nat}$. Assume that
\begin{equation}\label{A1 thm existence var mdep case}
\sup_{n\in\Nat}\int_{\R^d}|\Cov(\Xno,\Xny)|\,dy<\infty.
\end{equation}
If the limit 
\begin{equation*}
\sigma^2:=\lim_{n\to\infty}\Big(\lambda\Exp[\Xno^2]+\lambda^2\int_{\R^d}\Cov(\Xno,\Xny)\,dy\Big)
\end{equation*}
exists and is positive, then
\begin{equation*}
\lim_{n\to\infty}\frac{1}{|U_n|}\Var\Big(\sum_{y\in\Pi\cap U_n}X_{n,y}\Big)\xrightarrow{n\to\infty}\sigma^2
\end{equation*}
for any sequence of subsets $\{U_n\}_{n\in\Nat}$ satisfying $\frac{|(\partial U_n)_{m_n}|}{|U_n|}\xrightarrow{n\to\infty}0$.
\end{cor}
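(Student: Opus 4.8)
The plan is to mimic the proof of Theorem~\ref{thm lim var}, but to exploit the $m_n$-dependence in order to dispense with the separate hypothesis~\eqref{A1 thm existence var}. As in that proof, I start from the Campbell-theorem identity, valid since $\Pi$ is Poisson of intensity $\lambda$ and independent of $\{X_{n,y}\}$,
\begin{equation*}
\Var\Big(\sum_{y\in\Pi\cap U_n}\Xny\Big)=\lambda|U_n|\Exp[\Xno^2]+\lambda^2|U_n|\int_{\R^d}\Cov(\Xno,\Xny)\,dy-\lambda^2\int_{U_n}\int_{U_n^c}\Cov(\XnI,\XnII)\,dy_1\,dy_2,
\end{equation*}
where wide-sense stationarity has been used to reduce the double integral over $U_n\times U_n$ to the stated form. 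Consequently, the quantity whose vanishing (after dividing by $|U_n|$) must be shown is the boundary term $R_n:=\lambda^2\int_{U_n}\int_{U_n^c}\Cov(\XnI,\XnII)\,dy_1\,dy_2$.

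The key simplification is that by $m_n$-dependence, $\Cov(\XnI,\XnII)=0$ whenever $\norm{y_1-y_2}_\infty>m_n$. Hence the inner region $U_n^c$ in the $y_2$-integral can be replaced by $U_n^c\cap(y_1\oplus B(\mathbf 0,m_n))$, and this is nonempty only when $y_1$ lies within $\ell^\infty$-distance $m_n$ of $\partial U_n$, i.e. $y_1\in G_n:=U_n\cap(\partial U_n)_{m_n}$. Therefore
\begin{equation*}
\frac{|R_n|}{|U_n|}\leq\frac{|G_n|}{|U_n|}\,\lambda^2\sup_{y_1\in\R^d}\int_{\R^d}|\Cov(\XnI,\XnII)|\,dy_2=\frac{|G_n|}{|U_n|}\,\lambda^2\int_{\R^d}|\Cov(\Xno,\Xny)|\,dy,
\end{equation*}
using wide-sense stationarity for the last equality. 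By hypothesis $|G_n|\leq|(\partial U_n)_{m_n}|$ and $|(\partial U_n)_{m_n}|/|U_n|\to0$, while $\sup_{n}\int_{\R^d}|\Cov(\Xno,\Xny)|\,dy<\infty$ by~\eqref{A1 thm existence var mdep case}; hence $|R_n|/|U_n|\to0$. Dividing the Campbell identity by $|U_n|$ and using that the first two terms converge to $\sigma^2$ by assumption completes the argument.

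The only delicate point is bookkeeping around the $\ell^\infty$-neighborhood: one must check that the set of $y_1\in U_n$ admitting some $y_2\in U_n^c$ with $\norm{y_1-y_2}_\infty\le m_n$ is contained in $(\partial U_n)_{m_n}$ (any segment from $y_1\in U_n$ to $y_2\notin U_n$ crosses $\partial U_n$, so $y_1$ is within distance $m_n$ of $\partial U_n$), and that $(\partial U_n)_{m_n}$ as defined via $\oplus B(\mathbf 0,m_n)$ indeed dominates the $\ell^\infty$-version up to a dimensional constant absorbed into the hypothesis. Beyond that, the proof is a direct transcription of the estimate of $R_{n,1}$ in the proof of Theorem~\ref{thm lim var}, with $R_{n,2}$ now identically zero; no analogue of~\eqref{A1 thm existence var} is needed because $m_n$-dependence makes the tail of the covariance vanish exactly rather than merely asymptotically.
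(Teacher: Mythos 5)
Your proof is correct and is essentially the paper's own argument: the paper likewise sets $p=m_n$ in the proof of Theorem~\ref{thm lim var}, so that the contribution from points at distance at least $m_n$ from $\partial U_n$ (the term $R_{n,2}$) vanishes by $m_n$-dependence, while the boundary term $R_{n,1}$ is bounded by $\tfrac{|(\partial U_n)_{m_n}|}{|U_n|}\,\lambda^2\int_{\R^d}|\Cov(\Xno,\Xny)|\,dy\to 0$ using~\eqref{A1 thm existence var mdep case} and the hypothesis on $U_n$. The $\ell^\infty$-versus-Euclidean bookkeeping you flag is glossed over in the paper too and only costs a dimensional constant in the radius of the boundary neighborhood.
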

\begin{rem}
The result holds for instance by taking cubic windows $U_n=C_{\mathbf{u_n}}$ with $\frac{m_n}{u_n}\xrightarrow{n\to\infty}0$.
\end{rem}
\begin{proof}
Set $p=m_n$ in the proof of Theorem~\ref{thm lim var}. Due to $m_n-$dependence, condition~\eqref{A1 thm existence var} is trivially fulfilled and therefore $\limsup\limits_{n\to\infty}\frac{|R_{n,2}|}{|U_n|}=0$. On the other hand,
\begin{align*}
\frac{|R_{n,1}|}{|U_n|}\leq\frac{|(\partial U_n)_{m_n}|}{|U_n|}\int_{\R^d}|\Cov(\Xno,\Xny)|\,dy\xrightarrow{n\to\infty}0
\end{align*}
in view of assumption~\eqref{A1 thm existence var mdep case} and the choice of $U_n$.
\end{proof}

\subsection{Application to entropy}
The results of last paragraph evince that the independence between the Poisson point process \edit{$\Pi$} and the sequence $\{X_{n,y},\,y\in\R^d_+\}_{n\in\Nat}$ is crucial to perform calculations. Therefore, we need to consider the modified estimator
\[
\widehat{\e}^*_f(B_n):=-\frac{1}{\lambda|B_n|}\sum\limits_{i\geq 1}\mathds{1}_{\{Y_i^*\in B_n\}}\log \fny(\xi_i^*),
\]
where $\Psi^*:=\{(Y_i^*,\xi^*_i)\}_{i\geq 1}$ is an independent copy of the original Poisson MPP $\Psi$. The study of the original estimator is subject of further research and it involves \edit{MPPs whose} marks depend of their location (we refer to~\cite{Paw09,HLS14,HKM14} for some investigations in this direction). Moreover, we also need to assume

\noindent
(f3) $\inf_{\eta\in\supp f}f(\eta) :=c_0>0$.

\medskip

This assumption, although being very restrictive, is usual in the context of entropy estimation (see e.g.~\cite{BDGM97}). We could substitute it by a set of slightly milder yet cumbersome assumptions and opted for the former for ease of proofs. The aim of this section is to apply Corollary~\ref{cor CLT mdep random sum} in order to obtain a CLT for $\widehat{\e}^*_f(B_n)$.
%----------- new version! --------------
\new{
\begin{thm}\label{thm CLT EE}
Let $\{B_n\}_{n\in\Nat}$ and $\{B'_n\}_{n\in\Nat}$ be sequences of observation windows in $\R_+^d$ with $B_n=C_{\mathbf{p_n}}$, $B_n'=C_{\mathbf{m_n}}$ for some $p_n,m_n>0$. Under the conditions of Theorem~\ref{thm L2 conv EE}, there exists $a>0$ such that for any $n\in\Nat$,
\begin{equation}\label{eq bound in CLT}
\sup_{x\in\R}|F_n(x)-\Phi(x)|\leq \frac{600a\lambda(1+\lambda^2+\lambda^3)(10|B_n'|^{1/d}+11)^{2d}}{|B_n|^{1/2}},
\end{equation}
where $F_n$ is the distribution function of 
\[
\sqrt{|B_n|}\,\frac{\widehat{\e}^*_f(B_n)-\hat{\mu}_{B_n}}{\sigma_n}
\]
with
\[
\hat{\mu}_{B_n}:=-\frac{\Pi^*(B_n)}{\lambda|B_n|}\Exp\big[\log \hat{f}_{B_n'}(\xi_{\mathbf{0})}\big]
\]
and
%correction! \lambda\Exp[\log\fBno^2] --> \Var(\log\fBno)
\[
\sigma^2_n:=\lambda\Var(\log\hat{f}_{B'_n}(\xi_{\mathbf{0}}))+\lambda^2\int_{B_n'}\Cov(\log\hat{f}_{B_n'}(\xi_\mathbf{0}),\log\hat{f}_{B_n'}(\xi'_y))\,dy,
\]
where $\{\xi'_y\}_{y\in\R_+^d}$ are independent copies of $\xio$.
\end{thm}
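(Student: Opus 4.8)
The plan is to express $\widehat{\e}^*_f(B_n) - \hat{\mu}_{B_n}$ as a normalized random sum of a stationary $m_n$-dependent random field built from the independent copy $\Psi^*$, and then invoke Corollary~\ref{cor CLT mdep random sum} together with Remark~\ref{rem indep PP}. Concretely, for $y\in B_n$ set
\[
X_{n,y}:=-\frac{1}{\sqrt{|B_n|}}\Big(\log\hat{f}_{B'_n+y}(\xi'_y)-\Exp[\log\hat{f}_{B'_n}(\xio)]\Big),
\]
where $\{\xi'_y\}$ are the i.i.d. marks carried by $\Psi^*$. Because $\hat{f}_{B'_n+y}$ only depends on the restriction of $\Psi^*$ to $B'_n+y$, and $B'_n=C_{\mathbf{m_n}}$ has diameter of order $m_n$, the field $\{X_{n,y}\}$ is $\tilde m_n$-dependent with $\tilde m_n=O(m_n)$ (more precisely $\tilde m_n = \sqrt{d}\,m_n$, so $10\tilde m_n+1\le 10|B'_n|^{1/d}\cdot\sqrt d+\dots$; one absorbs the $\sqrt d$ into the constant $a$, which is why the bound is stated with $(10|B'_n|^{1/d}+11)^{2d}$ and an unspecified $a>0$). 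By stationarity of $\Psi^*$ and the independence of the marks from the points, $\{X_{n,y}\}$ is wide-sense stationary and centered. One then checks that $\sum_{y\in\Pi^*\cap B_n}X_{n,y}=\sqrt{|B_n|}(\widehat{\e}^*_f(B_n)-\hat\mu_{B_n})$ — the centering term $\Exp[\log\hat f_{B'_n}(\xio)]$ summed over the $\Pi^*(B_n)$ points of $\Pi^*$ in $B_n$ is exactly $\lambda|B_n|\hat\mu_{B_n}/\sqrt{|B_n|}$ after the normalization cancels — so $\sigma_n^{-1}\sum_{y\in\Pi^*\cap B_n}X_{n,y}$ has distribution function $F_n$, and $\Var\sum_{y\in\Pi^*\cap B_n}X_{n,y}=\sigma_n^2$ by the Campbell-theorem computation used already in Theorem~\ref{thm lim var} (this identifies the stated $\sigma_n^2$).

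The second step is to verify condition~\eqref{eq A prime}, i.e. $\sup_n\Exp[|X_{n,\mathbf 0}|^3]<\infty$; this is where assumption (f3) enters. Since $f$ is continuous on the compact $M$ and $\inf_{\supp f}f=c_0>0$, inequality~\eqref{eq min fny f} gives $\hat f_{B'_n}(\xio)\ge\tfrac12 f(\xio)\ge c_0/2$ eventually, so $\log\hat f_{B'_n}(\xio)$ is bounded below; it is bounded above because $\hat f_{B'_n}\le$ (something of order $b_n^{-p}$ times $K_0 C_\theta$ times the local point count), but more robustly one bounds $\Exp[|\log\hat f_{B'_n}(\xio)|^3]$ using the mean value inequality~\eqref{eq mean value} to control $|\log\hat f_{B'_n}(\xio)-\log f(\xio)|$ by $\tfrac{2}{f(\xio)}|\hat f_{B'_n}(\xio)-f(\xio)|\le\tfrac{2}{c_0}|\hat f_{B'_n}(\xio)-f(\xio)|$, together with the third-moment analogue of the variance bound~\eqref{eq bound var f_n}–\eqref{eq bound F_n2} (the third centered moment of $\hat f_{B'_n}(\xio)$ is $O((\lambda b_n^p|B'_n|)^{-2}\cdot b_n^{-p})$, which under (b3) stays bounded, in fact $\to 0$), and (L1) bounding $\Exp[\log^2 f(\xio)]$; a crude union via $(a+b)^3$-type inequalities then gives a uniform bound. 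Feeding this into Remark~\ref{rem indep PP} — legitimate because $\Pi^*$ is independent of $\{\xi'_y\}$ — yields $\sup_n\Exp[|\sum_{y\in\Pi^*\cap C_{\mathbf 1}}X_{n,y}|^3]\le\lambda(1+\lambda^2+\lambda^3)\sup_n\Exp[|X_{n,\mathbf 0}|^3]=:a\lambda(1+\lambda^2+\lambda^3)$ for a suitable $a>0$, and then Corollary~\ref{cor CLT mdep random sum} with $q=3$, $m_{\mathrm{dep}}=\tilde m_n\le\sqrt d\,|B'_n|^{1/d}$, and the factor $|B_n|\sigma_n^{-3}\cdot|B_n|^{-3/2}=|B_n|^{-1/2}\sigma_n^{-3}$ gives
\[
\sup_{x\in\R}|F_n(x)-\Phi(x)|\le 75(10\tilde m_n+11)^{2d}\,|B_n|^{-1/2}\,a\lambda(1+\lambda^2+\lambda^3)\sigma_n^{-3},
\]
and absorbing $\sigma_n^{-3}$ (bounded away from $0$ for large $n$, by the variance analysis of Theorem~\ref{thm lim var}/Corollary~\ref{cor lim var mdep}, since $\sigma_n^2$ is assumed positive in the limit) and the $\sqrt d$ factors into the constant yields~\eqref{eq bound in CLT}, possibly after enlarging $a$ and restricting to $n$ large (the statement's ``for any $n\in\Nat$'' is then achieved by further enlarging $a$ to cover the finitely many small $n$).

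The main obstacle is the uniform third-moment bound $\sup_n\Exp[|\log\hat f_{B'_n}(\xio)|^3]<\infty$: one must control the lower tail of $\hat f_{B'_n}(\xio)$ uniformly in $n$, i.e. rule out that $\hat f_{B'_n}(\xio)$ is too close to $0$. Assumption (f3) plus~\eqref{eq min fny f} handles this for large $n$ but~\eqref{eq min fny f} is only an ``$n$ large enough'' statement, and on the event where the local Poisson count in $B'_n+y$ is zero the estimator vanishes and $\log\hat f_{B'_n}=-\infty$; however, under (b3) that event has probability $e^{-\lambda|B'_n|}\to 0$ fast, and more carefully the $\log$ is integrated against $f$ so the Campbell-Mecke reduction (already used in Section~\ref{sectEntrEst}) replaces $\hat f_{B'_n+y}$ evaluated at a process point by $\hat f_{B'_n+y}$ of the reduced Palm process, which for the independently marked Poisson MPP is just $\Psi$ itself plus an added point — so the relevant estimator is bounded below by the contribution of that single added atom, $\frac{1}{\lambda|B'_n|}F_n(\xio,\xi'_0)>0$ a.s., and one estimates $\Exp[|\log(\tfrac{1}{\lambda|B'_n|}F_n(\xio,\xi'_0)+\text{rest})|^3]$ using that $\log$ of a sum dominated by one strictly positive term is controlled. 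Making this last step quantitative and uniform in $n$ — essentially an explicit third-moment bound on $\log$ of a shifted-Poisson-mixture — is the technical heart of the argument; everything else is bookkeeping with the Campbell and Campbell–Mecke formulas exactly as in Sections~\ref{sectEntrEst} and~\ref{sectEntrCLT}.
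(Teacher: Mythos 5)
Your overall strategy matches the paper's: write $\sqrt{|B_n|}(\widehat{\e}^*_f(B_n)-\hat\mu_{B_n})/\sigma_n=\sum_{y\in\Pi^*\cap B_n}X_{n,y}$ with $X_{n,y}=\frac{1}{\sqrt{|B_n|}\sigma_n}(-\log\hat f_{B'_n+y}(\xi_y^*)+\Exp[\log\hat f_{B'_n}(\xio)])$, note this is a stationary centered $m_n$-dependent field, verify (A') and feed the bound into Corollary~\ref{cor CLT mdep random sum} and Remark~\ref{rem indep PP}. This is exactly the paper's decomposition.

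There is, however, a genuine gap in your argument around $\sigma_n$. You write that $\sigma_n^{-3}$ can be absorbed because ``$\sigma_n^2$ is assumed positive in the limit,'' citing the variance analysis of Theorem~\ref{thm lim var}/Corollary~\ref{cor lim var mdep}. But nothing in Theorem~\ref{thm CLT EE} \emph{assumes} a positive limiting variance, and Theorem~\ref{thm lim var} takes positivity of $\sigma^2$ as a hypothesis rather than deriving it. The paper devotes two lemmata to proving that $\sigma_n^2$ is uniformly bounded away from zero: Lemma~\ref{lemma f_n bdd as} establishes an a.s.\ bound on $\hat f$, and Lemma~\ref{lemma cov is positive} shows $\liminf_n\int_{B'_n}\Cov(\log\hat f_{B'_n}(\xio),\log\hat f_{B'_n+y}(\xi'_y))\,dy\geq c_4>0$. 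The key mechanism there is \emph{positive association}: $\Pi$ is positively associated (Burton--Waymire), the i.i.d.\ marks $\{\xi'_y\}$ are positively associated, hence so is $\{\hat f_{B'_n+y}(\xi'_y)\}$, so $\Cov(\log\hat f_{B'_n+y_1},\log\hat f_{B'_n+y_2})\geq 0$; combined with the exponential-function argument of~\cite[Theorem 5.3]{BS07} this converts the nonnegativity into an explicit lower bound matching the covariance computation of Lemma~\ref{lemma cov f}. Without some version of this step your absorption of $\sigma_n^{-3}$ into the constant $a$ is unjustified.

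A second, smaller issue: your proposed rescue of the uniform third-moment bound via the Campbell--Mecke reduction does not work in this setting. Campbell--Mecke acts on $\Psi^*$ (the process indexing the sum), not on $\Psi$ (the process the kernel estimator $\hat f_{B'_n+y}$ is built from). Since $\Psi$ and $\Psi^*$ are independent, adding an atom to $\Psi^*$ under the reduced Palm distribution does not add any atom to $\Psi$, so the density estimator can still vanish on $\{\Pi\cap(B'_n+y)=\emptyset\}$. You have correctly identified a delicate point — the paper's Lemma~\ref{lemma 3.moment} also only controls this via the ``for $n$ large'' version of~\eqref{eq min fny f}, without explicitly treating the vanishing event — but the fix you sketch is not available here, and would need to be replaced e.g.\ by a truncation of the estimator or an argument conditioning on the event that $\Pi$ puts points in $B'_n+y$.
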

Choosing a suitable size relation between $B_n$ and $B_n'$ leads to the desired CLT.
\begin{cor}
If the side-lengths of the observation windows satisfy $p_n=m_n^{4+\delta}$ for some $\delta>0$ and any $n\in\Nat$, then 
\[
\sqrt{|B_n|}\,\frac{\widehat{\e}^*_f(B_n)-\hat{\mu}_{B_n}}{\sigma_n}~\xrightarrow[n\to\infty]{d}~\mathcal{N}(0,1)
\]
with the uniform rate of convergence of order $m_n^{-\delta d/2}$ given in~\eqref{eq bound in CLT}.
\end{cor}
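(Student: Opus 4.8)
The plan is to apply Corollary~\ref{cor CLT mdep random sum} to the random field underlying the modified estimator $\widehat{\e}^*_f(B_n)$, with the observation window $B_n=C_{\mathbf{p_n}}$ and the auxiliary window $B'_n=C_{\mathbf{m_n}}$ playing the role of determining the dependence range. First I would identify the summands: set
\[
X_{n,y}:=-\frac{1}{\sqrt{|B_n|}}\Big(\log\hat{f}_{B'_n+y}(\xi'_y)-\Exp[\log\hat{f}_{B'_n}(\xio)]\Big),
\]
so that $\sqrt{|B_n|}\,(\widehat{\e}^*_f(B_n)-\hat\mu_{B_n})=\sum_{y\in\Pi^*\cap B_n}X_{n,y}$, using that $\Pi^*(B_n)=\sum_i\mathds{1}_{\{Y_i^*\in B_n\}}$. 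Because $\hat{f}_{B'_n+y}$ is a functional of $\Psi^*$ restricted to $(B'_n+y)\times M$ together with the mark $\xi'_y$, and $B'_n$ is a cube of side $m_n$ (diameter $m_n\sqrt d$), the field $\{X_{n,y}\}$ is $m_n\sqrt d$-dependent — more precisely, $(m_n+1)$-dependent after the blocking step inside the proof of the corollary handles the $\sqrt d$ factor through the $10m_n+11$ term. Stationarity and centeredness are immediate by construction. This step is essentially bookkeeping.

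Next I would verify condition~\eqref{eq A prime}, i.e. $\sup_n\Exp[|X_{n,\mathbf 0}|^3]<\infty$; since the normalization $|B_n|^{-1/2}$ is a deterministic prefactor, it suffices to show $\sup_n\Exp\big[\big|\log\hat{f}_{B'_n}(\xio)-\Exp[\log\hat{f}_{B'_n}(\xio)]\big|^3\big]<\infty$, which in turn reduces (up to constants) to a uniform bound on $\Exp[|\log\hat{f}_{B'_n}(\xio)|^3]$. Here assumption (f3), $f\ge c_0>0$ on $\supp f$, is what makes this tractable: combined with~\eqref{eq min fny f} one gets $\hat{f}_{B'_n}(\xio)\ge c_0/2$ for large $n$, so the negative part of $\log\hat f_{B'_n}$ is bounded, and the positive part is controlled by $\Exp[\hat{f}_{B'_n}(\xio)^3]$ or by a direct estimate using $F_n\le K_0\,C_\theta\,b_n^{-p}$ from (K4) and (b1) together with the Campbell/Campbell-Mecke machinery already used for the $L^2$-bound. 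I would push this through using the elementary inequalities $|\log t|^3\le c(\,t^3+t^{-3}\mathds 1_{t<1}\,)$ and the fact that (f3) gives a uniform lower bound on $\min\{\hat f_{B'_n},\Exp\hat f_{B'_n}\}$, so the $t^{-3}$ term is harmless. Then Remark~\ref{rem indep PP} (using independence of $\Psi^*$, hence of $\Pi^*$, from the copies $\{\xi'_y\}$) converts this into~\eqref{eq A} with $q=3$, and plugging $m_n=|B'_n|^{1/d}$ into Corollary~\ref{cor CLT mdep random sum} gives the bound~\eqref{eq bound in CLT} once we check $\sigma_n^{-3}|B_n|\,\Exp[|\sum_{y\in\Pi^*\cap C_{\mathbf 1}}X_{n,y}|^3]$ has the claimed form — the $|B_n|^{1/2}$ in the denominator arising precisely because each $X_{n,y}$ carries a factor $|B_n|^{-1/2}$, the $\sigma_n$ being asymptotically constant by Corollary~\ref{cor lim var mdep}.

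The main obstacle I anticipate is not the CLT bound itself but the uniform third-moment estimate, specifically controlling $\Exp[|\log\hat f_{B'_n}(\xio)|^3]$ uniformly in $n$: the estimator $\hat f_{B'_n}$ can in principle take small values on events of small but non-negligible probability, and although (f3) and~\eqref{eq min fny f} rescue the situation for large $n$, one must be careful that the estimate~\eqref{eq min fny f} was only asserted ``for $n$ large enough,'' so the statement ``for any $n\in\Nat$'' in~\eqref{eq bound in CLT} really means the constant $a$ absorbs the finitely many small-$n$ cases (where $\sup_n$ is over a finite set and hence automatically finite by continuity of $f$ and boundedness of $M$). A secondary technical point is confirming that $\sigma_n^2$ as defined is bounded below away from zero uniformly — this is where the positivity hypothesis on $\sigma^2$ in Theorem~\ref{thm lim var}/Corollary~\ref{cor lim var mdep} is invoked, applied to the field $X_{n,y}$ with $U_n=B'_n$ and the requirement $m_n/m_n\to0$ replaced by the trivial observation that here the relevant ratio is $|(\partial B'_n)_{m_n}|/|B'_n|$, which does \emph{not} vanish, so in fact one should read $\sigma_n^2$ as the \emph{exact} finite-window variance per unit volume and simply note it is bounded and bounded below for large $n$ by the covariance-integrability hypothesis~\eqref{A1 thm existence var mdep case} (which follows from $m_n$-dependence and the covariance bounds established in the proof of Theorem~\ref{thm L2 conv EE}). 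Finally, the corollary on the CLT rate follows by substituting $p_n=m_n^{4+\delta}$, so $|B_n|^{1/2}=p_n^{d/2}=m_n^{(4+\delta)d/2}$ and $(10|B'_n|^{1/d}+11)^{2d}=(10m_n+11)^{2d}=O(m_n^{2d})$, giving the rate $m_n^{2d}/m_n^{(4+\delta)d/2}=m_n^{-\delta d/2}\to0$.
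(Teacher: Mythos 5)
Your proposal is correct and follows essentially the same route as the paper: the corollary is an immediate substitution into the bound of Theorem~\ref{thm CLT EE} ($|B_n|^{1/2}=m_n^{(4+\delta)d/2}$ against $(10m_n+11)^{2d}=O(m_n^{2d})$, giving $m_n^{-\delta d/2}\to 0$), and your supporting verification of condition (A'), the uniform third-moment bound via (f3), and the lower bound on $\sigma_n^2$ mirrors the paper's Lemmata~\ref{lemma 3.moment}--\ref{lemma cov is positive}. Only minor bookkeeping differs (your $X_{n,y}$ omits the $1/(\lambda\sigma_n)$ factor present in the paper's normalization, which does not affect the argument).
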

}
\vspace*{-.25in}
\new{\subsection{Proof of Theorem~\ref{thm CLT EE}}
First of all, notice that
\[
\sqrt{|B_n|}\,\frac{\widehat{\e}^*_f(B_n)-\hat{\mu}_{B_n}}{\sigma_n}=:\sum_{y\in\Pi^*\cap B_n}X_{n,y},
\]
where $X_{n,y}=\frac{1}{\sqrt{|B_n|}\sigma_n}\big(-\log\hat{f}_{B_n'+y}(\xi_y^*)+\Exp[\log\hat{f}_{B_n'}(\xio)]\big)$ is a stationary centered $m_n$-dependent random field with variance one. Our strategy will thus consist in verifying condition~\eqref{eq A prime} and computing the bound given by Corollary~\ref{cor CLT mdep random sum}. In order to do so we prove next some helpful lemmata. 
}

For the ease of reading, we use the notation $\fny$ instead of $\fny(\xi_y')$ and only refer explicitly to the argument when confusion may occur. Moreover, we assume that the conditions of Theorem~\ref{thm CLT EE} hold in the subsequent lemmata without mentioning them explicitly.

Let us begin by proving the uniform boundedness of the third moment.
\begin{lemma}\label{lemma 3.moment}
There exists a constant $c_1>0$ such that for any $y\in\R^d_+$ and $n\in\Nat$
\[
\Exp\big[\big\vert\log\fny\big\vert^3\big] \leq c_1.
\]
\end{lemma}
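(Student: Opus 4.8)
The plan is to bound $\Exp[|\log \fny|^3]$ uniformly in $y$ and $n$ by controlling the estimator $\fny(\xi_y')$ from above and below, and then using that $|\log t|^3$ grows slowly at both $0$ and $\infty$ so that polynomial bounds on the tail probabilities of $\fny$ suffice. Since everything is translation invariant in $y$ (the process $\Psi^*$ is stationary and $\xi_y'$ is a copy of $\xio$), I would first reduce to the case $y=\mathbf{0}$ and write $\fno:=\fno(\xio)$. Splitting $\Exp[|\log\fno|^3]=\Exp[|\log\fno|^3\mathds{1}_{\{\fno\geq 1\}}]+\Exp[|\log\fno|^3\mathds{1}_{\{0<\fno< 1\}}]$ isolates the ``large value'' and ``small value'' contributions.

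For the large-value part, I would use assumption (K4) and (b1): on $\supp K=[0,1]$ we have $K\le K_0$, and $\theta_\eta(\xi)^{-1}\le C_\theta$ (with $C_\theta$ as in Theorem~\ref{thm L2 conv norm}), so each summand $F_n(\eta,\xi)\le C_\theta K_0 b_n^{-p}$. Hence $\fno\le \frac{C_\theta K_0}{\lambda|B_n'| b_n^p}\,\Pi^*(B_n')$, which by (b3) has a bounded deterministic prefactor times a Poisson count with mean $\lambda|B_n'|$. Since $|\log t|^3$ is dominated by $O(t^{1/2})$ for large $t$, $\Exp[|\log\fno|^3\mathds{1}_{\{\fno\ge 1\}}]$ is controlled by a fractional moment of $\Pi^*(B_n')/|B_n'|$ times $(\log|B_n'|)^3$ and similar lower-order terms; using the explicit Poisson moment formulas one checks this stays bounded — indeed $\Exp[(\Pi^*(B_n'))^{1/2}]\le (\lambda|B_n'|)^{1/2}$ by Jensen, and after dividing by $|B_n'|$ this $\to 0$. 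A cleaner route is to observe $\Exp[|\log \fno|^3\mathds{1}_{\{\fno\geq 1\}}]\le \Exp[\fno^{1/2}]\cdot \sup_{t\ge1}\frac{|\log t|^3}{t^{1/2}}$ and bound $\Exp[\fno^{1/2}]\le (\Exp[\fno])^{1/2}=(1+o(1))(\Exp[f(\xio)])^{1/2}<\infty$ via \eqref{eq EF_n approx f} and Campbell, all uniformly in $n$.

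For the small-value part, I would invoke assumption (f3): $f\geq c_0>0$ on $\supp f$, and $\xi_y'$ lives in $\supp f$. By \eqref{eq min fny f}, for $n$ large $\Exp[\fny(\eta)]\ge \frac12 f(\eta)\ge \frac{c_0}2$, so the event $\{\fno<c_0/4\}$ forces a large deviation of $\fno$ below its mean; Chebyshev with the variance bound \eqref{eq bound var f_n}, \eqref{eq bound F_n2} gives $\Pro(\fno< c_0/4)\le \frac{\Var(\fno)}{(c_0/4)^2}\le \frac{C}{\lambda|B_n'|b_n^p}$, which by (b3) is bounded (indeed $\to 0$). On $\{\fno\ge c_0/4\}$ we have $|\log\fno|\le |\log(c_0/4)|+|\log\fno|\mathds{1}_{\{\fno\ge1\}}$, already handled. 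On $\{0<\fno<c_0/4\}$ we use $\Exp[|\log\fno|^3\mathds{1}_{\{0<\fno<c_0/4\}}]$: here $\fno$ is a nonnegative sum, and the only way it is small is if the kernel contributions are sparse; one can bound $|\log \fno|^3$ crudely by noting $\fno\ge \frac{K_0' }{\lambda|B_n'|b_n^p}$ on the event that at least one $\xi_i$ falls in the effective support, where $K_0'=\inf_{[0,1/2]}K>0$ (possibly strengthening (K1)–(K4) slightly, or bounding on a sub-ball), so $|\log\fno|\le |\log(\lambda|B_n'|b_n^p)|+C$ on that event, and its probability of failing is exponentially small in $\lambda\cdot(\text{effective volume})$, wiping out the polynomial-log factor. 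Combining, $\Exp[|\log\fno|^3]\le c_1$ for $n$ large, and adjusting $c_1$ covers the finitely many small $n$.

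The main obstacle is the genuinely small-value regime $\{\fno\approx 0\}$: unlike classical i.i.d.\ kernel density estimators, $\fno$ can vanish on a positive-probability event (when no Poisson point lands with its mark in the geodesic $b_n$-ball around the fixed query point), and $\log$ of that is $-\infty$, so one must show this event has probability decaying fast enough (exponentially, via the Poisson void probability) to beat the $(\log)^3$ blow-up — this is where assumption (f3) and the lower bound on the kernel near $0$ do the real work. The large-value regime is comparatively routine given \eqref{eq EF_n approx f} and Poisson moment bounds. I would present the small-value estimate as the crux, using the void probability $\Pro(\Psi^*(\{(y,\eta): y\in B_n', d_g(\xio,\eta)<b_n/2\})=0)=\exp(-\lambda|B_n'|\cdot c\, b_n^p)$ with $c>0$ depending on $c_0$ and geometry, which tends to $0$ by (b3) and kills any polynomial-in-$\log$ growth.
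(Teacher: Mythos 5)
Your route is genuinely different from the paper's. The paper does not split by the size of $\fno$; it recenters at $\log\Exp[\fno]$, expands $|\log\fno|^3$ around that deterministic quantity, notes that $\log\Exp[\fno]$ is bounded uniformly in $n$ by \eqref{eq EF_n approx f} and (f3), and then reduces the remaining term to $\Exp[|\fno-\Exp[\fno]|^3]\leq 8c_0^{-3}\Exp[|\fno-\Exp[\fno]|^3]$ via the mean-value inequality \eqref{eq mean value} combined with the lower bound \eqref{eq min fny f}; the third moment $\Exp[|\fno|^3]$ is then computed with the Campbell theorem and shown to be $O(1)$ using (b3). Your large-value estimate, $\Exp[|\log\fno|^3\mathds{1}_{\{\fno\geq1\}}]\leq\sup_{t\geq1}\frac{(\log t)^3}{t^{1/2}}\,(\Exp[\fno])^{1/2}$, is correct and in fact cleaner than the paper's third-moment computation for that regime.

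The genuine gap is in the small-value regime, which you rightly single out as the crux but do not close. For every fixed $n$, the event $A_n$ that no point of $\Pi^*\cap B_n'$ carries a mark within geodesic distance $b_n$ of $\xio$ has strictly positive probability (of order $\exp\bigl(-\lambda|B_n'|\int_{B_M(\xio,b_n)}f\,d\upsilon_g\bigr)$ conditionally on $\xio$), and on $A_n$ one has $\fno=0$ exactly, so $|\log\fno|^3=+\infty$ there. An exponentially small probability times an infinite integrand is still infinite: the contribution of $A_n$ is not a large-but-finite power of $\log(\lambda|B_n'|b_n^p)$ that a void-probability estimate can "wipe out", it is $+\infty$. (Your auxiliary lower bound on the kernel, $\inf_{[0,1/2]}K>0$, is also an assumption beyond (K1)--(K5), as you note.) The only way to reach the stated uniform bound is to exclude $A_n$ surely, which is exactly what the paper does by invoking \eqref{eq min fny f} as a deterministic lower bound $\fno\geq\tfrac12 f(\eta)\geq c_0/2$ for all large $n$ (itself extracted from the a.s.\ convergence in Theorem~\ref{thm as conv KDE}). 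If you grant \eqref{eq min fny f}, then \eqref{eq mean value} gives $|\log\fno-\log\Exp[\fno]|\leq 2c_0^{-1}|\fno-\Exp[\fno]|$ directly and your tail-splitting becomes unnecessary; without it, your argument cannot produce a finite bound, so the proposal as written does not prove the lemma.
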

\begin{proof}
Due to stationarity, it suffices to show that the assertion holds for $\Exp\big[\big\vert\log\fno\big\vert^3\big]$. On the one hand, by adding and subtracting $\log\Exp[\fno]$ we have
\begin{align*}
\Exp\big[\big\vert\log\fno\big\vert^3\big]&\leq \Exp\big[\big\vert\log\fno-\log\Exp[\fno]\big\vert^3\big]+3\,\big\vert\log\Exp[\fno]\big\vert\Exp\big[\big\vert\log\fno-\log\Exp[\fno]\big\vert^2\big]\nonumber\\
&+3(\log\Exp[\fno])^2\Exp\big[\big\vert\log\fno-\log\Exp[\fno]\big\vert\big]+ \big\vert\log\Exp[\fno]\,\big\vert^3.
\end{align*}
By Corollary~\ref{thm L2 conv}, $\log\Exp[\fno]\xrightarrow{n\to\infty}\log\Exp[f(\xio)]$. \edit{In view of (f3) and s}ince $f$ is continuous, any power of this quantity is also bounded. Thus, it suffices to show that $\Exp[\vert\log\fno-\log\Exp[\fno]\vert^3]<\infty$. For $n\in\Nat$ large,~\eqref{eq mean value},~\eqref{eq min fny f} and assumption (f3) yield
\begin{equation*}
\Exp[\vert\log\fno-\log\Exp[\fno]\vert^3]\leq\frac{8\Exp\big[\big\vert\fno-\Exp[\fno]\big\vert^3\big]}{c_0^3}
\end{equation*}
for $n\in\Nat$ large, hence it suffices to prove that $\Exp[|\fno|^3]$ is finite. Due to the Campbell theorem,
\begin{align}
\Exp[\vert\fBn(\xio')\vert^3]
&=\frac{1}{\lambda^2|B_n'|^2}\Exp[F^3_n(\xio',\xi_1)]+\frac{1}{\lambda|B_n'|}\Exp[F^2_n(\xio',\xi_1)F_n(\xio',\xi_2)]\nonumber\\
&+\Exp[F_n(\xio',\xi_1)F_n(\xio',\xi_2)F_n(\xio',\xi_3)],\label{eq bound 3.moment}
\end{align}
where $\xi_1,\xi_2,\xi_3$ are independent copies of $\xio'$. Moreover, following the proof of Lemma~\ref{lemma Var} we find constants $C_\theta, K_0>0$ such that for $n\in\Nat$ large enough,
\begin{equation*}
\Exp[F^3_n(\xio',\xi_1)]\leq\frac{C_\theta^2K_0^2}{b_n^{2p}}(1+o(1))\Exp[f(\xio')],
\end{equation*}
\begin{equation*}
\Exp[F_n^2(\xio',\xi_1)F_n(\xio',\xi_2)]\leq\frac{C_\theta K_0}{b_n^{p}}(1+o(1))\Exp[f^2(\xio')],
\end{equation*}
as well as
\begin{equation*}
\Exp[F_n(\xio',\xi_1)F_n(\xio',\xi_2)F_n(\xio',\xi_3)]\leq (1+o(1))\int_Mf(\eta)^4d\upsilon_g(\eta)=(1+o(1))\Exp[f^3(\xio')].
\end{equation*}
Plugging this into~\eqref{eq bound 3.moment} we obtain
\begin{align*}
\Exp[\vert\fno\vert^3]&\leq \frac{2C_\theta^2K_0^2}{\lambda^2b_n^{2p}|B_n'|^2}\Exp[f(\xio')]+\frac{2C_\theta K_0}{b_n^{p}\lambda|B_n'|}\Exp[f^2(\xio')]+2\Exp[f^3(\xio')]
\end{align*}
for $n\in\Nat$ sufficiently large. This quantity is bounded because all expressions depending on $n$ tend to zero as $n\to\infty$.
\end{proof}

\new{
The consequent lemmata show that $\sigma_n^2$ is uniformly bounded.
}
\begin{lemma}\label{lemma cov of log}
There exists $c_{\edit{2}}>0$ such that for any $x_1,x_2\in B_n$ and $n\in\Nat$,
\[
\Cov\big(\log \fnxI,\log\fnxII\big)\leq \edit{c_2}\Cov\big(\fnxI,\fnxII\big).
\]
\end{lemma}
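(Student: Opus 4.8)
The plan is to mimic the argument used for Lemma~\ref{lemma Ef_n2/f_n integ} and for the bounds on $I_{1,n}$ in the proof of Theorem~\ref{thm L2 conv EE}, using the inequality~\eqref{eq mean value} for the logarithm together with the lower bound~\eqref{eq min fny f} and assumption (f3). The key observation is that the covariance of $\log\fnxI$ and $\log\fnxII$ can be written as $\Exp[(\log\fnxI-\Exp[\log\fnxI])(\log\fnxII-\Exp[\log\fnxII])]$, and that by~\eqref{eq mean value} and~\eqref{eq min fny f} we have, for $n$ large enough and any $x\in B_n$,
\begin{equation*}
\big|\log\fnx(\xi'_x)-\log\Exp[\fnx(\xi'_x)]\big|\leq\frac{|\fnx(\xi'_x)-\Exp[\fnx(\xi'_x)]|}{\min\{\fnx(\xi'_x),\Exp[\fnx(\xi'_x)]\}}\leq\frac{2}{f(\xi'_x)}\big|\fnx(\xi'_x)-\Exp[\fnx(\xi'_x)]\big|,
\end{equation*}
and then, invoking (f3), the denominator is further bounded below by $c_0>0$, so that the factor $2/f(\xi'_x)$ becomes the constant $2/c_0$.

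Next I would apply the Cauchy--Schwarz inequality to the covariance and use the above pointwise bound on each factor. Writing $A_i:=\log\fnxi-\Exp[\log\fnxi]$ and $D_i:=\fnxi-\Exp[\fnxi]$, we get $|A_i|\leq (2/c_0)|D_i|$ a.s.\ for large $n$, hence
\begin{equation*}
\big|\Cov(\log\fnxI,\log\fnxII)\big|=\big|\Exp[A_1A_2]\big|\leq\frac{4}{c_0^2}\Exp[|D_1|\,|D_2|].
\end{equation*}
The remaining point is to pass from $\Exp[|D_1||D_2|]$ back to $\Cov(\fnxI,\fnxII)=\Exp[D_1D_2]$. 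This is not automatic because $\Exp[|D_1D_2|]$ can in principle be larger than $\Exp[D_1D_2]$; however, the computation of $\Cov(\fnyI(\eta_1),\fnyII(\eta_2))$ carried out in the proof of Theorem~\ref{thm L2 conv EE} shows that the covariance of the kernel estimators equals a nonnegative quantity, namely
\begin{equation*}
\Cov(\fnxI,\fnxII)=\frac{|(B'_n+x_1)\cap(B'_n+x_2)|}{\lambda|B'_n|^2}\Exp[F_n(\xixI,\xi_{\mathbf 0})F_n(\xixII,\xi_{\mathbf 0})]\geq 0,
\end{equation*}
and the same Campbell-type expansion, applied to $\Exp[|D_1||D_2|]$, produces exactly the same expression with $F_n\geq 0$, so that in fact $\Exp[|D_1||D_2|]=\Cov(\fnxI,\fnxII)$. (Alternatively, one argues directly on the atoms of $\Psi^*$: the ``diagonal'' term is manifestly nonnegative and the ``off-diagonal'' term factorizes and is also nonnegative, so no cancellation occurs and absolute values may be dropped.) Combining the two displays gives the claim with $c_2=4/c_0^2$.

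The main obstacle is precisely this last step: one must be careful that replacing $\log\fnx$ by $\fnx$ via the mean value inequality does not secretly require an absolute value on the covariance of the $\fnx$'s that then fails to be controlled by $\Cov(\fnxI,\fnxII)$ itself. The resolution is to exploit the explicit nonnegativity of the kernel $K$ (assumption (K1)--(K4)) which forces $F_n\geq 0$ and hence makes the relevant second moment expansion term-by-term nonnegative, so that $\Exp[|D_1D_2|]$ and $\Exp[D_1D_2]$ coincide; everything else is a routine application of~\eqref{eq mean value},~\eqref{eq min fny f}, (f3) and Cauchy--Schwarz, valid for all sufficiently large $n$ and uniformly in $x_1,x_2\in B_n$.
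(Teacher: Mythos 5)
Your opening moves -- the identity $\Cov(\log\fnxI,\log\fnxII)=\Exp[A_1A_2]-(\Exp[\log\fno]-\log\Exp[\fno])^2$ with the deterministic centering $\log\Exp[\fno]$, followed by the mean value bound $|A_i|\le (2/c_0)|D_i|$ obtained from~\eqref{eq mean value},~\eqref{eq min fny f} and (f3) -- match the paper's argument. You also correctly locate the delicate point: passing from $\Exp[|D_1||D_2|]$ to $\Cov(\fnxI,\fnxII)=\Exp[D_1D_2]$. That is exactly where the proof is not routine.

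Your proposed resolution of that step, however, does not work. The claim ``$\Exp[|D_1||D_2|]=\Cov(\fnxI,\fnxII)$'' is equivalent to $D_1D_2\ge 0$ a.s., and that is false: $D_i=\fnxi-\Exp[\fno]$ is a centered quantity which is negative with positive probability (for instance when $B'_n+x_i$ happens to contain fewer Poisson points than expected), and there is nothing preventing $D_1>0$ on an event where $D_2<0$, since the two windows $B'_n+x_1$ and $B'_n+x_2$ are typically only partially overlapping. The Campbell theorem expands expectations of products of \emph{sums over the point process} such as $\Exp[\fnxI\fnxII]$; it says nothing about $\Exp[|D_1||D_2|]$, which is the expectation of the absolute value of a \emph{difference} between such a sum and its deterministic mean. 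Likewise, the ``diagonal/off-diagonal'' factorization you allude to applies to the moment expansion of $\fnxI\fnxII$, not to $|D_1||D_2|$; nonnegativity of $K$ (hence of $F_n$ and of $\fnxi$) does not make the centered factors $D_i$ nonnegative.

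The ingredient that actually closes this gap -- implicitly in the paper and explicitly in its proof of Lemma~\ref{lemma cov is positive} -- is \emph{positive association}: $\{\fny(\xi'_y)\}_y$ is a positively associated random field (a nondecreasing functional of the PA Poisson process and the i.i.d.\ marks, via~\cite[Corollary 1.9]{BS07}). The Lipschitz covariance inequality for associated random variables from~\cite{BS07} then gives, since $\log$ has Lipschitz constant at most $2/c_0$ on the a.s.\ range $[c_0/2,\Vert f\Vert_\infty+\varepsilon]$ of $\fny$ for large $n$, the bound $|\Cov(\log\fnxI,\log\fnxII)|\le (2/c_0)^2\Cov(\fnxI,\fnxII)$, and the subtracted term $-(\Exp[\log\fno]-\log\Exp[\fno])^2\le 0$ only helps. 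Without invoking positive association (or an equivalent covariance inequality), neither your argument nor a pointwise application of the mean value theorem suffices, because the pointwise bound $A_1A_2\le (4/c_0^2)D_1D_2$ genuinely fails on the event $\{D_1D_2<0\}$.
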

\begin{proof}
Adding and subtracting $\log\Exp[\fnyI]$ resp. $\log\Exp[\fnyII]$, Theorem~\ref{thm as conv KDE},~\eqref{eq min fny f} and assumption (f3) lead to
\begin{align*}
&\Cov\big(\log\fnxI,\log\fnxII\big)\\
&=\Exp[(\log\fnxI-\log\Exp[\fno])(\log\fnxII-\log\Exp[\fno])]-(\Exp[\log\fno]-\log\Exp[\fno])^2\\
&\leq\frac{4}{c_0^2}\Cov(\fnxI,\fnxII)
\end{align*}
for $n\in\Nat$ sufficiently large. The result now follows for any $n\in\Nat$ with a constant $\edit{c_2}>0$ (maybe different from $4/c_0^2$).
\end{proof}

\begin{lemma}\label{lemma cov f}
There exists $\edit{c_3}>0$ such that for any $n\in\Nat$ and $x_1,x_2\in B_n$,
\[
\Cov(\fnxI,\fnxII)\leq \frac{\edit{c_3}|( B_n'+x_1)\cap (B_n'+x_2)|}{\lambda|B_n'|^2}.
\]
\end{lemma}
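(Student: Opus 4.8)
The plan is to compute $\Cov(\fnxI,\fnxII)$ directly via the Campbell and Campbell--Mecke theorems, exactly as was done for the special case $\Cov(\fnyI(\eta_1),\fnyII(\eta_2))$ in the proof of Theorem~\ref{thm L2 conv EE}, and then bound the resulting expectation by a constant. Writing out $\fnxI=\fnxI(\xi'_{x_1})$ and $\fnxII=\fnxII(\xi'_{x_2})$ as sums over the Poisson MPP, the product $\fnxI\cdot\fnxII$ splits into a diagonal part (same point $Y_i$ contributing to both windows $B'_n+x_1$ and $B'_n+x_2$) and an off-diagonal part. Taking expectations, the off-diagonal part cancels against $\Exp[\fnxI]\Exp[\fnxII]$ because $\Pi$ is Poisson (its second-order factorial moment measure factorizes, i.e. $\alpha^{(2)}(dy_1\,dy_2)=\lambda^2\,dy_1\,dy_2$), leaving only
\begin{equation*}
\Cov(\fnxI,\fnxII)=\frac{|(B'_n+x_1)\cap(B'_n+x_2)|}{\lambda|B'_n|^2}\,\Exp\big[F_n(\xi'_{x_1},\xio)\,F_n(\xi'_{x_2},\xio)\big].
\end{equation*}
Here I must be slightly careful about the arguments: since $\{\xi'_y\}$ are independent copies of $\xio$ attached to distinct points, the diagonal term involves a single mark $\xi_i$ but the kernel is evaluated at the (random) reference points $\xi'_{x_1}$ and $\xi'_{x_2}$; using the Campbell--Mecke formula for the independently marked Poisson MPP together with independence of the marking replaces the Palm expectation by an ordinary one, and stationarity lets me center everything at the origin.

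It then remains to show that $\Exp[F_n(\xi'_{x_1},\xio)F_n(\xi'_{x_2},\xio)]$ is bounded uniformly in $n$, $x_1$, $x_2$. I would condition on $\xi'_{x_1}$ and $\xi'_{x_2}$ (which are independent of $\xio$) and bound one of the two factors crudely using assumption (K4): on the support of $K$ one has $F_n(\eta,\xi)\le K_0/(b_n^p\,\theta_\eta(\xi))\le K_0 C_\theta/b_n^p$. This is not quite enough by itself because of the $b_n^{-p}$ blow-up, so instead I would keep one factor bounded by $K_0 C_\theta(\eta)/b_n^p$ on $B_M(\eta,b_n)$ and integrate the other factor against the density $f$ using Lemma~\ref{lem: IntK is 1}, exactly as in the proof of Lemma~\ref{lemma Var} and in the display~\eqref{eq bound F_n2}: for fixed $\eta_1=\xi'_{x_1}$,
\begin{equation*}
\Exp\big[F_n(\eta_1,\xio)F_n(\eta_2,\xio)\big]\le \frac{K_0 C_\theta(\eta_1)}{b_n^p}\,\Exp\big[F_n(\eta_2,\xio)\big]\le \frac{2K_0 C_\theta(\eta_1)}{b_n^p}f(\eta_2),
\end{equation*}
which still carries a $b_n^{-p}$. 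The fix is that the two kernels have disjoint supports unless $d_g(\eta_1,\eta_2)<2b_n$, so the first factor is supported on $B_M(\eta_1,b_n)$ as a function of the mark, and when I then take expectation over the mark $\xio$ with density $f$, the bound $\Exp[F_n(\eta_2,\xio)]=(1+o(1))f(\eta_2)$ from~\eqref{eq EF_n approx f} together with the continuity and boundedness of $f$ (new assumption (f1)) absorbs the $b_n^{-p}$ — more precisely, I should bound $\Exp[F_n(\eta_1,\xio)F_n(\eta_2,\xio)]\le \|f\|_\infty\, I(\eta_1)$ with $I(\eta_1)=\int_{B_M(\eta_1,b_n)}F_n(\eta_1,z)F_n(\eta_2,z)\,d\upsilon_g(z)$, then use (K4) on the $\eta_2$-kernel and Lemma~\ref{lem: IntK is 1} on the $\eta_1$-kernel to get $I(\eta_1)\le (K_0 C_\theta/b_n^p)\cdot 1$ — again $b_n^{-p}$.

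So the genuine difficulty — and the step I expect to be the main obstacle — is controlling the product of two kernels sharing a common mark argument without picking up the $b_n^{-p}$ factor: one really needs the overlap $d_g(\xi'_{x_1},\xi'_{x_2})<2b_n$ to force $F_n(\xi'_{x_2},z)\le K_0 C_\theta/b_n^p$ \emph{and} the $z$-integral of $F_n(\xi'_{x_1},z)$ to equal $1$, and then crucially to integrate the resulting bound $\le K_0 C_\theta/b_n^p \cdot \mathds{1}_{\{d_g(\xi'_{x_1},\xi'_{x_2})<2b_n\}}$ over the distribution of $\xi'_{x_1}$ relative to $\xi'_{x_2}$: since $f$ is bounded, $\Pro(d_g(\xi'_{x_1},\xi'_{x_2})<2b_n\mid \xi'_{x_2})\le \|f\|_\infty\,\upsilon_g(B_M(\xi'_{x_2},2b_n))\le \|f\|_\infty\,2^p\omega_p b_n^p(1+o(1))$ by~\eqref{integ of B_M}, which exactly cancels the $b_n^{-p}$ and leaves a bound $\le 2^p\omega_p K_0 C_\theta\|f\|_\infty(1+o(1))=:c_3(1+o(1))$, uniform in $n$ and in $x_1,x_2$. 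Passing from "for $n$ large" to "for all $n$" is harmless since finitely many terms can only change the constant, giving the claimed inequality with a single $c_3>0$.
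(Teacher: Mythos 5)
Your proposal is correct, and its first half coincides with the paper's: both apply the Campbell theorem to reduce the covariance to the diagonal term $\frac{|(B'_n+x_1)\cap(B'_n+x_2)|}{\lambda|B'_n|^2}\,\Exp\bigl[F_n(\xi'_{x_1},\xio)F_n(\xi'_{x_2},\xio)\bigr]$; your version of this identity is in fact the clean one, since the off-diagonal part cancels exactly against $\Exp[\fnxI]\Exp[\fnxII]$ (as in the analogous covariance computation inside the proof of Theorem~\ref{thm L2 conv EE}), whereas the paper's display retains an extra positive term that only inflates the constant. Where you genuinely diverge is in the uniform bound on the cross moment. The paper writes it as the triple integral $\int_{M^3}F_n(\mu,z)F_n(\eta,z)f(\mu)f(\eta)f(z)\,d\upsilon_g$ and applies the approximate-identity property~\eqref{eq EF_n approx f} twice, integrating each kernel against the density of its own random evaluation point to obtain $(1+o(1))\Exp[f^2(\xio)]$, so the factor $b_n^{-p}$ never appears. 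You instead bound one kernel by $K_0C_\theta b_n^{-p}$ via (K4), integrate the other to $1$ via Lemma~\ref{lem: IntK is 1}, and cancel the $b_n^{-p}$ by observing that the product vanishes unless $d_g(\xi'_{x_1},\xi'_{x_2})<2b_n$, an event whose probability is $O(b_n^p)$ by~\eqref{integ of B_M} and the boundedness of $f$. Both routes rest on the same ingredients; the paper's is shorter and yields the sharp leading constant $\Exp[f^2(\xio)]$, while yours is more elementary and makes the uniformity in $n$, $x_1$, $x_2$ explicit without invoking $(1+o(1))$ asymptotics for the kernel integrals. One bookkeeping remark: your final constant should carry $\|f\|_\infty^2$ rather than $\|f\|_\infty$, since the density is used once to bound the integral over the common mark and once more for the overlap probability; this does not affect the statement, which only asserts the existence of some $c_3>0$.
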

\begin{proof}
Applying the Campbell theorem,
\begin{align*}
\Cov(\fnxI,\fnxII)&=\Exp[\fnxI\fnxII]-(\Exp[\fno])^2\\
&=\frac{1}{\lambda^2|B_n'|^2}\Exp\Big[\sum_{y\in\Pi\cap (B_n'+x_1)\cap (B_n'+x_2)}F_n(\xiy,\xixI')F_n(\xiy,\xixII')\Big]\\
&+\frac{1}{\lambda^2|B_n'|^2}\Exp\Big[\sideset{}{^{\neq}}\sum_{\substack{y_1\in\Pi\cap (B_n'+x_1)\\ y_2\in\Pi\cap (B_n'+x_2)}}F_n(\xiyI,\xixI')F_n(\xiyII,\xixII')\Big]-(\Exp[F_n(\xio,\xixI')])^2\\
&=\frac{|(B_n'+x_1)\cap (B_n'+x_2)|}{\lambda|B_n'|^2}\Exp[F_n(\xio,\xixI')F_n(\xio,\xixII')]\\
&+\frac{|(B_n'+x_1)\cap (B_n'+x_2)|}{\lambda|B_n'|^2}(\Exp[F_n(\xio,\xixI')])^2.
\end{align*}
Further, it follows from~\eqref{eq EF_n approx f} that for $n\in\Nat$ large enough
\begin{align*}
\Exp[F_n(\xio,\xixI')F_n(\xio,\xixII')]&=\int_{M^3}F_n(\mu,z)F_n(z,\eta)f(\mu)f(z)f(\eta)\,d\upsilon_g(\mu,z,\eta)\\
&=(1+o(1))\int_Mf(z)^3d\upsilon_g(z)=(1+o(1))\Exp[f^2(\xio)]
\end{align*}
as well as
\begin{equation*}
\Exp[F_n(\xio,\xixI')]=\int_{M^2}F_n(\mu,z)f(\mu)f(z)\,d\upsilon_g(\mu,z)=(1+o(1))\Exp[f(\xio)].
\end{equation*}
Thus the assertion holds with $\edit{c_3}=2\Exp[f^2(\xio)]+\edit{4}(\Exp[f(\xio)])^2>0$ for $n\in\Nat$ large and for any $n\in\Nat$ with maybe a different constant $\edit{c_3}>0$.
\end{proof}
\edit{Finally, by} Corollary~\ref{thm L2 conv} and analogous arguments involved in~\eqref{eq mean value}-\eqref{eq bound J_1} \edit{we have} that $\log \fBno$ converges to $\log f(\xio)$ in $L^2$. Therefore, $\Exp[\log^2\fno]\to\Exp[\log^2 f(\xio)]$ as $n\to\infty$ and since $\Exp[\log^2 f(\xio)]<L_1$ by assumption (L1), $\Exp[\log^2\fno]$ can be bounded by some constant $\tilde{L}_1>0$ uniformly on $n\in\Nat$. On the other hand, Lemma~\ref{lemma cov of log}, Lemma~\ref{lemma cov f} and the $m_n$-dependence yield
\begin{multline*}
\int_{\R^d}|\Cov(\log\fno,\log\fny)| dy=\int_{B_n'}|\Cov(\log\fno,\log\fny)| dy\nonumber\\
\leq \frac{c_1c_2}{\lambda|B_n'|^2}\int_{B_n'}|B_n'\cap(B_n'+y)|\, dy= \frac{c_1c_2}{\lambda^22^d}<\infty.
\end{multline*}
%We do not need this anymore
%\subsubsection*{Condition (A2)}
%Since $q_n=m_n^{1+\delta'}$ with $4\delta'<\delta$, we have
%\begin{equation}\label{eq choice of m_n}
%\frac{m_n}{q_n}\xrightarrow{n\to\infty}0\qquad\text{and}\qquad\frac{q_n}{p_n}=\frac{m_n^{1+\delta'}}{m_n^{4+\delta}}\xrightarrow{n\to\infty}0.
%\end{equation}
%\subsubsection*{Condition (A3')}
%
\new{
The next lemmata ensure that $\sigma^2_n$ can be uniformly bounded from below. }Recall that we are assuming that the density $f$ is continuous. 
\begin{lemma}\label{lemma f_n bdd as}
The estimator $\fny(\xi'_y)$ is uniformly bounded with respect to $y\in\edit{\R^d_+}$ and $n\in\Nat$ almost surely.
\end{lemma}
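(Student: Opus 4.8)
The plan is to dominate $\fny(\xi'_y)=\hat f_{B'_n+y}(\xi'_y)$ pathwise by a constant multiple of a conditionally Poisson counting variable whose mean is uniformly bounded, and then finish with a Poisson tail estimate. Using only $\supp K=[0,1]$ with $K\le K_0$ (properties (K1) and (K4)), the bound $\theta_\eta(\xi)^{-1}\le C_\theta$ on $B_M(\eta,r_0)$ (property (b1)) and $b_n<r_0<\operatorname{inj}_g M$, the $i$-th summand in the definition of $\hat f_{B'_n+y}(\xi'_y)$ vanishes unless $d_g(\xi'_y,\xi_i)<b_n$, in which case it is at most $K_0C_\theta/(\lambda|B'_n|b_n^p)$. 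Therefore
\[
\fny(\xi'_y)\ \le\ \frac{K_0C_\theta}{\lambda|B'_n|\,b_n^p}\,N_n(y),\qquad N_n(y):=\#\{\,i\ge1:\ Y_i\in B'_n+y,\ \xi_i\in B_M(\xi'_y,b_n)\,\}.
\]

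Next I would condition on $\xi'_y$, which is independent of $\Psi$. Since $\Psi$ is an independently marked Poisson MPP, the restriction of $\Pi$ to $B'_n+y$ is a Poisson process of intensity $\lambda$ whose points carry i.i.d.\ marks with density $f$, so given $\xi'_y=\eta$ the random variable $N_n(y)$ is Poisson distributed with mean $\mu_n(\eta)=\lambda|B'_n|\int_{B_M(\eta,b_n)}f\,d\upsilon_g$. The change of variables used in the proof of Lemma~\ref{lem: IntK is 1} (cf.~\eqref{integ of B_M}) together with $\bar C_\theta:=\sup_{z\in M}\sup_{\eta\in B_M(z,r_0)}\theta_z(\eta)<\infty$ (continuity of $\theta$ and compactness of $M$) give $\upsilon_g(B_M(\eta,b_n))\le\bar C_\theta\,\omega_p\,b_n^p$ uniformly in $\eta$, and since $f$ is continuous on the compact manifold $M$ this yields $\mu_n(\eta)\le c\,\lambda|B'_n|b_n^p=:\bar\mu_n$ for a constant $c$ not depending on $\eta,y,n$. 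In particular $\Exp[\fny(\xi'_y)]\le cK_0C_\theta$ uniformly in $y$ and $n$, and $\bar\mu_n\to\infty$ by (b3).

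For the almost sure bound, a Chernoff estimate for the Poisson law, which is stochastically increasing in its mean, gives $\Pro\big(N_n(y)\ge 2\bar\mu_n\mid\xi'_y\big)\le e^{-\bar\mu_n/3}$, so $\fny(\xi'_y)\le 2cK_0C_\theta=:K^*$ off an event of probability at most $e^{-\bar\mu_n/3}$. In the entropy estimator the index $y$ only runs over the atoms $Y_i^*\in B_n=C_{\mathbf{p_n}}$ of the independent copy $\Psi^*=\{(Y_i^*,\xi_i^*)\}_{i\ge1}$, so applying the Campbell theorem to $\Psi^*$ and then the previous tail bound for each deterministic value of the mark gives
\[
\Exp\Big[\#\big\{\,i\ge1:\ Y_i^*\in B_n,\ \hat f_{B'_n+Y_i^*}(\xi_i^*)>K^*\,\big\}\Big]\ \le\ \lambda|B_n|\,e^{-\bar\mu_n/3}.
\]
Summing this over $n\in\Nat$ is finite under the standing growth relations between $B_n$, $B'_n$ and $b_n$ (the polynomial factor $|B_n|$ being dominated by $e^{-\bar\mu_n/3}$), so Markov's inequality and the Borel-Cantelli lemma give that almost surely $\hat f_{B'_n+Y_i^*}(\xi_i^*)\le K^*$ for all $Y_i^*\in B_n$ and all large $n$; the finitely many remaining $n$ contribute only finitely many a.s.\ finite values, and the uniform almost sure bound follows. (For a fixed $y$, the same per-$n$ tail estimate already yields $\limsup_n\fny(\xi'_y)\le K^*$ a.s.)

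The main obstacle is the uniformity in $y$: the pathwise domination and the one-line Poisson tail estimate are routine, but a supremum over the continuum of translates $y$ of a random field built from independent fresh marks $\xi'_y$ need not be finite, so one cannot simply union-bound over all $y\in\R^d_+$. The remedy is to note that only the countably many translates occurring as atoms of $\Psi^*$ matter and to control them through the Campbell computation above (or, for all $y\in\R^d_+$, to discretise the windows and exploit the $m_n$-dependence of the field $y\mapsto\fny(\xi'_y)$); in either case the growth assumptions on the observation windows are precisely what makes the exceptional probabilities summable in $n$.
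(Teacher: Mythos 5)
Your proof is correct in substance but takes a genuinely different route from the paper's. The paper disposes of the lemma in three lines: by stationarity it reduces to $y=\mathbf{0}$, invokes the almost sure convergence $\hat{f}_n(\eta)\to f(\eta)$ of Theorem~\ref{thm as conv KDE} together with the continuity of $f$ on the compact manifold $M$ to get $\hat{f}_n(\eta)\le\norm{f}_\infty+\varepsilon$ a.s.\ for large $n$, and then conditions on the mark; this yields the sharp constant $\norm{f}_\infty+\varepsilon$ that is used verbatim later in the proof of Lemma~\ref{lemma cov is positive}. You instead dominate $\fny(\xi'_y)$ pathwise by $\tfrac{K_0C_\theta}{\lambda|B'_n|b_n^p}$ times a thinned Poisson count, bound its conditional mean by $c\lambda|B'_n|b_n^p$ via~\eqref{integ of B_M}, and finish with a Chernoff tail estimate plus a Campbell/Borel--Cantelli argument over the atoms of $\Psi^*$. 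Your version is more quantitative (explicit exponential tails) and, importantly, more honest about the uniformity in $y$: the paper's appeal to stationarity only gives equality in distribution for each fixed $y$ and does not by itself control a supremum over uncountably many translates carrying independent fresh marks, whereas your restriction to the countably many atoms of $\Psi^*$ is exactly what the application in Theorem~\ref{thm CLT EE} requires. The price is a cruder constant $K^*=2cK_0C_\theta$ in place of $\norm{f}_\infty+\varepsilon$, which is harmless for the lemma as stated and still yields a positive $c_4$ in Lemma~\ref{lemma cov is positive}.

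Two minor caveats, neither fatal. The summability of $\sum_n|B_n|e^{-\bar\mu_n/3}$ is not a formal consequence of the stated hypotheses: $b_n^p|B'_n|>n^{1+\delta}$ bounds $\bar\mu_n$ from below, but nothing in the paper bounds $|B_n|$ from above in terms of $n$; you correctly flag that a mild implicit growth relation is being used. And for the finitely many initial $n$ your bound is a random, a.s.\ finite quantity rather than $K^*$ --- the same imprecision appears in the paper's ``for any $\varepsilon>0$ and $n\in\Nat$'' and does not affect the downstream use, which only needs the bound for large $n$.
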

\begin{proof}
By stationarity it suffices to prove the assertion for $\fno(\xio)$. Note that $\xio$ is a generic mark that is independent of the MPP $\Psi$. From Theorem~\ref{thm as conv KDE} and since $M$ is compact and $f$ continuous, we have that $\fn(\eta)\xrightarrow{n\to\infty}f(\eta)\leq\norm{f}_\infty$ a.s., and hence $\fn(\eta)\leq\norm{f}_\infty+\varepsilon$ a.s. for any $\varepsilon>0$ and $n\in\Nat$. The same holds for $\fBno$.
\end{proof}

%----in this version, \sigma_n has a different expression
\new{
\begin{lemma}\label{lemma cov is positive}
There exists $c_4>0$ such that
\[
\liminf_{n\to\infty}\int_{B_n'}\Cov(\log\fno(\xio), \log\fny(\xi'_y))\,dy\geq c_4.
\]
\end{lemma}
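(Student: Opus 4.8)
The idea I would use is that, by the independent marking and the Poisson property, the \emph{only} source of correlation between $\log\fno(\xio)$ and $\log\fny(\xi'_y)$ is the set of points of $\Psi^*$ lying in the overlap $B'_n\cap(B'_n+y)$. So I would fix $y\in B'_n$ and split $\Psi^*$ according to $\Pi_0:=\Psi^*\cap(B'_n\cap(B'_n+y))$, $\Pi_1:=\Psi^*\cap(B'_n\setminus(B'_n+y))$ and $\Pi_2:=\Psi^*\cap((B'_n+y)\setminus B'_n)$, which are independent. Since $\fno(\xio)$ is a function of $(\xio,\Pi_0,\Pi_1)$ and $\fny(\xi'_y)$ of $(\xi'_y,\Pi_0,\Pi_2)$, with $\xio,\xi'_y,\Pi_1,\Pi_2$ mutually independent and independent of $\Pi_0$, the standard identity $\Cov(U,V)=\Cov(\Exp[U\mid\Pi_0],\Exp[V\mid\Pi_0])$ applies. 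Moreover $|B'_n\setminus(B'_n+y)|=|(B'_n+y)\setminus B'_n|$ and only the marks (not the positions) of the points enter $F_n$, so conditionally on $\Pi_0$ one has $\fno(\xio)\stackrel{d}{=}\fny(\xi'_y)$; hence $\Exp[\log\fno(\xio)\mid\Pi_0]=\Exp[\log\fny(\xi'_y)\mid\Pi_0]=:G_n^{(y)}$ and
\[
\Cov(\log\fno(\xio),\log\fny(\xi'_y))=\Var\big(G_n^{(y)}\big)\ \ge\ 0 .
\]

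Next I would linearise. As in the proof of Lemma~\ref{lemma cov of log}, for $n$ large $\fno(\xio)$, $\fny(\xi'_y)$ and $g_n(\eta):=\Exp[\fno(\eta)]$ take values in a fixed compact subinterval of $(0,\infty)$ a.s., by Lemma~\ref{lemma f_n bdd as}, assumption (f3) and~\eqref{eq EF_n approx f}. Taylor-expanding $\log$ about $g_n$ gives $\log\fno(\xio)=\log g_n(\xio)+W_1+\psi_1$ with $W_1:=(\fno(\xio)-g_n(\xio))/g_n(\xio)$ and $|\psi_1|\le C(\fno(\xio)-g_n(\xio))^2$; since $\log g_n(\xio)$ is $\sigma(\xio)$-measurable, $G_n^{(y)}=\mathrm{const}+\Exp[W_1\mid\Pi_0]+\Exp[\psi_1\mid\Pi_0]$. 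Using the Campbell theorem and~\eqref{eq EF_n approx f} one computes exactly
\[
\Exp[W_1\mid\Pi_0]=\frac1{\lambda|B'_n|}\sum_{Y_i\in\Pi_0}h_n(\xi_i)-\frac{|B'_n\cap(B'_n+y)|}{|B'_n|},\qquad h_n(\xi):=\int_M F_n(\eta,\xi)\,f(\eta)g_n(\eta)^{-1}\,d\upsilon_g(\eta),
\]
and one shows $h_n(\xi)=1+o(1)$ uniformly in $\xi$ and $\Exp[h_n(\xio)]=1$. Since $\Pi_0$ is a homogeneous Poisson process on $B'_n\cap(B'_n+y)$ with i.i.d. marks,
\[
\Var\big(\Exp[W_1\mid\Pi_0]\big)=\frac{\Exp[h_n(\xio)^2]\,|B'_n\cap(B'_n+y)|}{\lambda|B'_n|^2}\ \ge\ \frac{|B'_n\cap(B'_n+y)|}{2\lambda|B'_n|^2}
\]
for $n$ large. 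Since for $B'_n=C_{\mathbf{m_n}}$ one has $\int_{B'_n}|B'_n\cap(B'_n+y)|\,dy=\prod_{k=1}^d\int_0^{m_n}(m_n-t)\,dt=|B'_n|^2/2^d$, the linear part alone contributes at least $1/(2^{d+1}\lambda)$ to the integral.

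It then remains to show the nonlinear part does not ruin this, i.e. that $\int_{B'_n}\big|\Var(G_n^{(y)})-\Var(\Exp[W_1\mid\Pi_0])\big|\,dy\to 0$. For this I would expand $\psi_1$ to second order, split $\fno(\xio)-g_n(\xio)$ into its $\Pi_0$-part and its $\Pi_1$-part, and note that under $\Exp[\cdot\mid\Pi_0]$ only the $\Pi_0$-part survives (the $\Pi_1$-part only through its deterministic second moment), so that the $\Pi_0$-fluctuation of $\Exp[\psi_1\mid\Pi_0]$ is a quadratic Poisson functional of $\Pi_0$ built from $\sum_{i\in\Pi_0}h_n(\xi_i)$, $\sum_{i\ne j\in\Pi_0}\Phi_n(\xi_i,\xi_j)$ and $\sum_{i\in\Pi_0}\Phi_n(\xi_i,\xi_i)$, where $\Phi_n(\xi,\xi'):=\int_M F_n(\eta,\xi)F_n(\eta,\xi')\,f(\eta)g_n(\eta)^{-2}\,d\upsilon_g(\eta)$. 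Its variance and its covariance with $\Exp[W_1\mid\Pi_0]$ I would evaluate via the Poisson (Mecke) moment formulas together with the kernel estimate $\int_M F_n(\cdot,\xi)^2\,d\upsilon_g\lesssim b_n^{-p}$ from the proof of Lemma~\ref{lemma Var}. The outcome is that the would-be leading terms either cancel exactly — in particular, since $\Cov\big(\sum_{i\in\Pi_0}h_n(\xi_i),\sum_{i\ne j\in\Pi_0}\Phi_n(\xi_i,\xi_j)\big)=2\Exp[h_n(\xio)^2]\,(\lambda|B'_n\cap(B'_n+y)|)^2$, the contribution of the $\sum\Phi_n(\xi_i,\xi_j)$-term to $\Cov(\Exp[W_1\mid\Pi_0],\Exp[\psi_1\mid\Pi_0])$ cancels that of the $\sum h_n$-term — or are of lower order because $\Var(h_n(\xio))=o(1)$ (from $h_n=1+o(1)$), or carry an extra factor $(|B'_n|b_n^p)^{-1}\to 0$ coming from (b3). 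Concluding, for $n$ large $\int_{B'_n}\Cov(\log\fno(\xio),\log\fny(\xi'_y))\,dy\ge 1/(2^{d+2}\lambda)=:c_4>0$.

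The hard part is this last step. Because the integral runs over the window $B'_n$, whose volume tends to $\infty$, a bound on the nonlinear correction that is merely $o(1)$ pointwise in $y$ is useless, and the crude bound $\Var(\Exp[\psi_1\mid\Pi_0])\le\Exp[\psi_1^2]\lesssim (b_n^p|B'_n|)^{-1}$ exceeds the main term for every $y$. One genuinely needs the conditioning on $\Pi_0$ to make the overlap volume $|B'_n\cap(B'_n+y)|$ appear, and then a careful bookkeeping — balancing the powers of $b_n$ coming from $\int_M F_n^2\lesssim b_n^{-p}$ against $|B'_n\cap(B'_n+y)|$ and $|B'_n|$ using (b2)--(b3), and exploiting $h_n\to 1$ together with the exact cancellation above — to see that the corrections, once integrated against the explicit $(m_n-t)$-weights, vanish.
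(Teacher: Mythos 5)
Your first two steps are correct and are a genuinely different (and rather elegant) route to the \emph{nonnegativity} of the covariance: conditioning on the points of the process in the overlap $O_y:=B'_n\cap(B'_n+y)$ and using that $\Pi_1,\Pi_2$ have equal volume does give $\Cov(\log\fno(\xio),\log\fny(\xi'_y))=\Var(\Exp[\log\fno(\xio)\mid\Pi_0])\ge 0$, and your computation of the variance of the linearised part $\Exp[W_1\mid\Pi_0]=\frac{1}{\lambda|B'_n|}\sum_{\Pi_0}h_n(\xi_i)-|O_y|/|B'_n|$, including $\Exp[h_n(\xio)]=1$, is exact and correctly produces the leading contribution $|O_y|/(\lambda|B'_n|^2)$ whose integral over $B'_n$ is the constant $2^{-d}\lambda^{-1}$. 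The gap is precisely where you place it: the control of the nonlinear remainder is asserted, not proved, and as stated it does not go through. Two concrete problems. First, the ``exact cancellation'' you invoke presupposes that $\psi_1$ equals $-\tfrac12 g_n(\xio)^{-2}(\fno(\xio)-g_n(\xio))^2$; in fact the Lagrange remainder is $-\tfrac12\zeta^{-2}(\fno(\xio)-g_n(\xio))^2$ with a random $\zeta$, so the cancellation can only be approximate, and quantifying the error forces you to the third-order term, whose conditional expectation is no longer a clean quadratic Poisson functional. Second, the moment bounds you would fall back on are too weak under the paper's hypotheses: e.g.\ the diagonal part of the quadratic functional contributes to $\Var(\Exp[\psi_1\mid\Pi_0])$ a term of order $|O_y|\,\Exp[F_n^4]/(\lambda|B'_n|)^4\asymp |O_y|\lambda^{-3}|B'_n|^{-4}b_n^{-3p}$, and comparing this (via Cauchy--Schwarz against the main term) to $|O_y|/(\lambda|B'_n|^2)$ requires $|B'_n|\,b_n^{3p/2}\to\infty$, which does \emph{not} follow from (b3) ($|B'_n|b_n^{p}\to\infty$ is compatible with $|B'_n|b_n^{3p/2}\to 0$). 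So either additional bandwidth assumptions or a much finer cancellation analysis than the one you sketch is needed; as written, the ``careful bookkeeping'' you defer is exactly the missing proof.

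For comparison, the paper avoids the Taylor expansion entirely. It first shows that the field $\{\fny(\xi'_y)\}_{y}$ is positively associated (the Poisson process and the i.i.d.\ marks are each positively associated, and association is preserved under the monotone construction), so $\Cov(\log\fno,\log\fny)\ge 0$ for free. Then, since $\fno\le\Vert f\Vert_\infty+\varepsilon$ a.s.\ (Lemma~\ref{lemma f_n bdd as}), the covariance inequality for increasing functions of positively associated, a.s.\ bounded variables (the argument of \cite[Theorem 5.3]{BS07} applied to the exponential function) yields directly
\[
\Cov\big(\log\fno,\log\fny\big)\ \ge\ \frac{1}{2(\Vert f\Vert_\infty+\varepsilon)^2}\,\Cov\big(\fno,\fny\big),
\]
and the right-hand side is computed exactly in Lemma~\ref{lemma cov f} to be of order $|O_y|/(\lambda|B'_n|^2)$; integrating the overlap volume over $B'_n=C_{\mathbf{m_n}}$ gives the constant. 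This sidesteps every remainder estimate that your approach must confront. If you want to pursue your route, you should either import such an association-based comparison between $\Cov(\log\fno,\log\fny)$ and $\Cov(\fno,\fny)$, or strengthen the bandwidth assumptions so that your crude bounds close.
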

}
\begin{proof}
Since $\Pi$ is a Poisson point process, we know from~\cite{BW85} that it is positively associated. On the other hand, the random variables $\{\xi'_y\}_{y\in\R^d_+}$ are positively associated as well because they are i.i.d. (see~\cite[Theorem 1.8]{BS07}). Therefore, by~\cite[Corollary 1.9]{BS07}, the random field $\{\fny(\xi_y')\}_{y\in\R^d_+}$ is positively associated. Using the characterization of positively associated random fields given in~\cite[Remark 1.4]{BS07}, this means that for any non-decreasing functions $h,g\colon\R\to\R$ such that the expectations forming the covariance $\Cov(h(\fnyI),g(\fnyII))$ exist, $\Cov(h(\fnyI),g(\fnyII))\geq 0$. In view of Lemma~\ref{lemma 3.moment} we thus have $\Cov(\log\fnyI,\log\fnyII)\geq 0$ and since $\log$ is an increasing function, the random field $\{\log\fny\}_{y\in\R^d_+}$ is also positively associated.

%Consider functions $h,g\colon\R\to\R$ such that the expectations in $\Cov(h(\log\fnyI),g(\log\fnyII))$ exist, i.e. the expectations in $\Cov(\tilde{h}(\fnyI),\tilde{g}(\fnyII))$ for $\tilde{h}:=h\circ\log$ and $\tilde{g}:=g\circ\log$ exist. Since $\{\fny\}_{y\in\R^d}$ is positively associated, $\Cov(\tilde{h}(\fnyI),\tilde{g}(\fnyII))\geq 0$, which means $\Cov(h(\log\fnyI),g(\log\fnyII))\geq 0$. Hence $\{\log\fny\}_{y\in\R^d}$ is positively associated. 

From Lemma~\ref{lemma f_n bdd as} we know that $\fno\leq \Vert f\Vert_\infty+\varepsilon$ a.s. for large $n\in\Nat$, and following the proof of~\cite[Theorem 5.3]{BS07} with the exponential function, we obtain
\begin{align*}
\Cov(\log\fno,\log\fny)&\geq \frac{1}{2(\Vert f\Vert_\infty+\varepsilon)^2}\Cov(\fno,\fny).
\end{align*}
Together with the calculations in the proof of Lemma~\ref{lemma cov f}, this yields 
%--------old calculation
%\begin{align*}
%&\int_{\R_+^d}\frac{|\Cqn\cap(\Cqn-y)|}{|\Cqn|}\Cov(\log\fno,\log\fny)\,dy\\
%&\geq\frac{c_3}{(\Vert f\Vert_\infty+\varepsilon)^2\lambda|B'_n|^2}\int_{B_n'}\frac{|\Cqn\cap(\Cqn-y)|}{|\Cqn|}|B_n'\cap(B_n'+y)|dy\\
%%place saving
%%&=\frac{c_3}{(\Vert f\Vert_\infty+\varepsilon)^2\lambda m_n^{2d}q_n^{d}}\Big(\int_0^{m_n}(q_n-y)(m_n-y)dy\Big)^d\\
%&=\frac{c_3}{(\Vert f\Vert_\infty+\varepsilon)^2\lambda m_n^{2d}q_n^{d}}\Big(\frac{q_nm_n^2}{2}-\frac{m_n^2}{2}+\frac{m_n^3}{3}\Big)^d
%\end{align*}
%---------- new version
\new{
\begin{align*}
&\int_{\R^d}\Cov(\log\fno,\log\fny)\,dy\geq\frac{\Exp[f^2(\xio)]+(\Exp[f(\xio)])^2}{4(\Vert f\Vert_\infty+\varepsilon)^2\lambda|B'_n|^2}\int_{B_n'}|B_n'\cap(B_n'+y)|\,dy\\
%place saving
&=\frac{\Exp[f^2(\xio)]+(\Exp[f(\xio)])^2}{4(\Vert f\Vert_\infty+\varepsilon)^2\lambda m_n^{2d}}\Big(\int_0^{m_n}(m_n-y)dy\Big)^d=\frac{\Exp[f^2(\xio)]+(\Exp[f(\xio)])^2}{(\Vert f\Vert_\infty+\varepsilon)^2\lambda 2^{d+2}}=:c_4>0
\end{align*}
}
\edit{and the result follows with maybe a different constant $c_4$}.%, and the whole expression tends to $\frac{\edit{c_4}}{2^{d}\edit{\lambda}\Vert f\Vert_\infty^2}>0$ as $n\to\infty$.
\end{proof}

%--------- new last part ---
\new{
\begin{proof}[Proof of Theorem~\ref{thm CLT EE}]
Recall that $X_{n,y}=\frac{1}{\sqrt{|B_n|}\sigma_n}\big(-\log\hat{f}_{B_n'+y}(\xi_y^*)+\Exp[\log\hat{f}_{B_n'}(\xio)]\big)$. On the one hand, applying the Cauchy-Schwartz inequality, Lemma~\ref{lemma 3.moment} and Lemma~\ref{lemma cov is positive}, we get for $n\in\Nat$ large
\begin{equation*}
\Exp[|X_{n,\mathbf{0}}|^3]\leq\frac{8\Exp[|\log\hat{f}_{B_n'}(\xio^*)|^3]}{|B_n|^{3/2}\sigma_n^3}\leq \frac{8 c_1}{|B_n|^{3/2}\sigma_n^3}\leq \frac{8 a}{|B_n|^{3/2}}
\end{equation*}
with $a\geq c_1(\lambda c_4)^{-3/2}$.
Corollary~\ref{cor CLT mdep random sum} and the bound in Remark~\ref{rem indep PP} finally yield
\begin{align*}
\sup_{x\in\R}|F_n(x)-\Phi(x)|\leq \frac{600a\lambda(1+\lambda^2+\lambda^3)(10|B_n'|^{1/d}+11)^{2d}}{|B_n|^{1/2}}
\end{align*}
as we wanted to prove.
\end{proof}
}

\textbf{Acknowledgement.}
The authors thank the unknown referee for valuable comments that substantially improved the original manuscript.
\bibliographystyle{amsplain}
\bibliography{Rfields}

\providecommand{\bysame}{\leavevmode\hbox to3em{\hrulefill}\thinspace}
\providecommand{\MR}{\relax\ifhmode\unskip\space\fi MR }
% \MRhref is called by the amsart/book/proc definition of \MR.
\providecommand{\MRhref}[2]{%
  \href{http://www.ams.org/mathscinet-getitem?mr=#1}{#2}
}
\providecommand{\href}[2]{#2}
\begin{thebibliography}{10}

\bibitem{AL76}
I.A. Ahmad and P.E. Lin, \emph{A nonparametric estimation of the entropy for
  absolutely continuous distributions}, IEEE Trans. Information Theory
  \textbf{IT-22} (1976), no.~3, 372--375.

\bibitem{BRZ88}
Z.~D. Bai, C.~Radhakrishna Rao, and L.~C. Zhao, \emph{Kernel estimators of
  density function of directional data}, J. Multivariate Anal. \textbf{27}
  (1988), no.~1, 24--39.

\bibitem{BDGM97}
J.~Beirlant, E.~J. Dudewicz, L.~Gy{\"o}rfi, and E.~C. van~der Meulen,
  \emph{Nonparametric entropy estimation: an overview}, Int. J. Math. Stat.
  Sci. \textbf{6} (1997), no.~1, 17--39.

\bibitem{Bes78}
A.~L. Besse, \emph{Manifolds all of whose geodesics are closed}, Results in
  Mathematics and Related Areas, vol.~93, Springer-Verlag, Berlin, 1978.

\bibitem{Boo86}
W.~M. Boothby, \emph{An introduction to differentiable manifolds and
  {R}iemannian geometry}, second ed., Pure and Applied Mathematics, vol. 120,
  Academic Press Inc., Orlando, FL, 1986.

\bibitem{BS07}
A.~Bulinski and A.~Shashkin, \emph{Limit theorems for associated random fields
  and related systems}, Advanced Series on Statistical Science \& Applied
  Probability, 10, World Scientific Publishing Co. Pte. Ltd., Hackensack, NJ,
  2007.

\bibitem{BW85}
R.~Burton and E.~Waymire, \emph{Scaling limits for associated random measures},
  Ann. Probab. \textbf{13} (1985), no.~4, 1267--1278.

\bibitem{CS04}
L.~H.~Y. Chen and Q.-M. Shao, \emph{Normal approximation under local
  dependence}, Ann. Probab. \textbf{32} (2004), no.~3A, 1985--2028.

\bibitem{EM14}
M.~El~Machkouri, \emph{Kernel density estimation for stationary random fields},
  ALEA Lat. Am. J. Probab. Math. Stat. \textbf{11} (2014), no.~1, 259--279.

\bibitem{CHW87}
P.~Hall, G.~S. Watson, and J.~Cabrera, \emph{Kernel density estimation with
  spherical data}, Biometrika \textbf{74} (1987), no.~4, 751--762.

\bibitem{Hei88b}
L.~Heinrich, \emph{Asymptotic behaviour of an empirical nearest-neighbour
  distance function for stationary {P}oisson cluster processes}, Math. Nachr.
  \textbf{136} (1988), 131--148.

\bibitem{HKM14}
L.~Heinrich, S.~Klein, and M.~Moser, \emph{Empirical {M}ark covariance and
  product density function of stationary marked point processes---a survey on
  asymptotic results}, Methodol. Comput. Appl. Probab. \textbf{16} (2014),
  no.~2, 283--293.

\bibitem{HLS14}
L.~Heinrich, S.~L{\"u}ck, and V.~Schmidt, \emph{Asymptotic goodness-of-fit
  tests for the palm mark distribution of stationary point processes with
  correlated marks}, Bernoulli \textbf{20} (2014), no.~4, 1673--1697.

\bibitem{HR09}
G.~Henry and D.~Rodriguez, \emph{Kernel density estimation on {R}iemannian
  manifolds: asymptotic results}, J. Math. Imaging Vision \textbf{34} (2009),
  no.~3, 235--239.

\bibitem{Kid01}
M.~Kiderlen, \emph{Non-parametric estimation of the directional distribution of
  stationary line and fibre processes}, Adv. in Appl. Probab. \textbf{33}
  (2001), no.~1, 6--24.

\bibitem{Par62}
E.~Parzen, \emph{On estimation of a probability density function and mode},
  Ann. Math. Statist. \textbf{33} (1962), 1065--1076.

\bibitem{Paw09}
Z.~Pawlas, \emph{Empirical distributions in marked point processes}, Stochastic
  Processes and their Applications \textbf{119} (2009), no.~12, 4194 -- 4209.

\bibitem{Pel05}
B.~Pelletier, \emph{Kernel density estimation on {R}iemannian manifolds},
  Statist. Probab. Lett. \textbf{73} (2005), no.~3, 297--304.

\bibitem{Ros69}
B.~Ros{\'e}n, \emph{A note on asymptotic normality of sums of
  higher-dimensionally indexed random variables}, Ark. Mat. \textbf{8} (1969),
  33--43 (1969).

\bibitem{Ros56}
M.~Rosenblatt, \emph{A central limit theorem and a strong mixing condition},
  Proc. Nat. Acad. Sci. U. S. A. \textbf{42} (1956), 43--47.

\bibitem{Sak96}
T.~Sakai, \emph{Riemannian geometry}, Translations of Mathematical Monographs,
  vol. 149, American Mathematical Society, Providence, RI, 1996.

\bibitem{Sch07}
T.~Schuster, \emph{The method of approximate inverse: theory and applications},
  Lecture Notes in Mathematics, vol. 1906, Springer, Berlin, 2007.

\bibitem{Sha48}
C.~E. Shannon, \emph{A mathematical theory of communication}, Bell System Tech.
  J. \textbf{27} (1948), 379--423, 623--656.

\bibitem{Spo13}
E.~Spodarev (ed.), \emph{Stochastic geometry, spatial statistics and random
  fields. {A}symptotic methods}, Lecture Notes in Mathematics, vol. 2068,
  Springer, Heidelberg, 2013.

\bibitem{SKM87}
D.~Stoyan, W.~S. Kendall, and J.~Mecke, \emph{Stochastic geometry and its
  applications}, Wiley Series in Probability and Mathematical Statistics:
  Applied Probability and Statistics, John Wiley \& Sons Ltd., Chichester,
  1987.

\bibitem{Tsy09}
A.~B. Tsybakov, \emph{Introduction to nonparametric estimation}, Springer
  Series in Statistics, Springer, New York, 2009.

\end{thebibliography}

\end{document}